\DeclareRobustCommand\Iff{\;\Longleftrightarrow\;}
\DeclarePairedDelimiter{\ceil}{\lceil}{\rceil} 
\DeclarePairedDelimiter{\floor}{\lfloor}{\rfloor} 
\DeclarePairedDelimiter{\abs}{\lvert}{\rvert} 
\newsavebox{\@brx}
\newcommand{\llangle}[1][]{\savebox{\@brx}{\(\m@th{#1\langle}\)}%
  \mathopen{\copy\@brx\kern-0.7\wd\@brx\usebox{\@brx}}}
\newcommand{\rrangle}[1][]{\savebox{\@brx}{\(\m@th{#1\rangle}\)}%
  \mathclose{\copy\@brx\kern-0.7\wd\@brx\usebox{\@brx}}}
\DeclareFontFamily{U}{mathx}{}
\DeclareFontShape{U}{mathx}{m}{n}{<-> mathx10}{}
\DeclareSymbolFont{mathx}{U}{mathx}{m}{n}
\DeclareMathAccent{\widehat}{0}{mathx}{"70}
\DeclareMathAccent{\widecheck}{0}{mathx}{"71}
\definecolor{darkazure}{HTML}{0059b3}
\definecolor{azure}{HTML}{007fff}
\definecolor{paleazure}{HTML}{a6d2ff}
\definecolor{vdarkgold}{HTML}{807100}
\definecolor{darkergold}{HTML}{b39e00}
\definecolor{darkgold}{HTML}{f2d400}
\definecolor{gold}{HTML}{ffdf00}
\definecolor{palegold}{HTML}{fff7bf}
\definecolor{beaverred}{HTML}{cc0000}
\definecolor{palered}{HTML}{f68c67}
\definecolor{mygrey}{HTML}{e6e6e6}
\definecolor{darkgrey}{HTML}{bbbbbb}
\definecolor{mygreen}{HTML}{1b9e77}
\definecolor{palegreen}{HTML}{6bc0a7}
\definecolor{mintgreen}{rgb}{0.6, 1.0, 0.6}
\definecolor{goldazure}{HTML}{84b441}
\newcommand{\colourA}{azure}
\newcommand{\palecolourA}{paleazure}
\newcommand{\colourB}{gold}
\newcommand{\palecolourB}{palegold}
\newcommand{\darkcolourB}{darkergold}
\newcommand{\colourC}{beaverred}
\newcommand{\palecolourC}{palered}
\newcommand{\colourGreen}{goldazure}
\newcommand{\YcA}{\Yfillcolour{\colourA}}
\newcommand{\YcB}{\Yfillcolour{\colourB}}
\newcommand{\YpA}{\Yfillcolour{\palecolourA}}
\newcommand{\YpB}{\Yfillcolour{\palecolourB}}
\newcommand{\YpC}{\Yfillcolour{\palecolourC}}
\newcommand{\YdB}{\Yfillcolour{\darkcolourB}}
\newcommand{\YvdB}{\Yfillcolour{vdarkgold}}
\newcommand{\YcG}{\Yfillcolour{\colourGreen}}
\renewcommand\iff{if and only if\xspace}
\crefname{enumi}{}{} 
\newcommand{\prelistcommand}{\nobreak\leavevmode\@nobreaktrue\vspace*{0.4em}}
\let\oldr@@t\r@@t
\def\r@@t#1#2{%
\setbox0=\hbox{$\oldr@@t#1{#2\,}$}\dimen0=\ht0
\advance\dimen0-0.2\ht0
\setbox2=\hbox{\vrule height\ht0 depth -\dimen0}%
{\box0\lower0.4pt\box2}}
\LetLtxMacro{\oldsqrt}{\sqrt}
\renewcommand*{\sqrt}[2][\ ]{\oldsqrt[#1]{#2}}
\newcommand\qt{\sqrt2}
\newcommand\Ga\Gamma
\newcommand\ze\zeta
\newcommand\eps\epsilon 
\newcommand\bep{\bar\ep}
\newcommand\beps{\bep}
\renewcommand{\phi}{\varphi}
\renewcommand{\emptyset}{\varnothing}
\newcommand\ip[2]{\left(#1:#2\right)} 
\renewcommand\set[1]{\left\{#1\right\}}
\NewDocumentCommand\setbuild{s m m}{%
    \IfBooleanTF#1%
    {\ensuremath{\left\{\, #2 \, \middle| \, #3 \,\right\}}}%
    {\ensuremath{\{\, #2 \, \mid \, #3 \,\}}}%
}
\renewcommand\rt[1]{\rotatebox{90}{$#1$}}
\newcommand{\len}[1]{l(#1)} 
\newcommand{\dbl}[1]{\mathrm{dbl}(#1)} 
\newcommand\reg{^{\mathrm{reg}}} 
\newcommand\lad[1]{\call_{#1}} 
\newcommand\bip{bipartition\xspace}
\newcommand\bips{bipartitions\xspace}
\newcommand\rh[2]{R(#1,#2)} 
\newcommand\rb[2]{S(#1,#2)} 
\newcommand\fbc{$4$-bar-core\xspace}
\newcommand\fbcs{\fbc{}s\xspace}
\newcommand{\bipf}[1]{\underline{\smash{#1}}} 
\newcommand{\bipmu}{\bipf{\mu}}
\newcommand{\bipla}{\bipf{\la}}
\newcommand{\bipnu}{\bipf{\nu}}
\newcommand{\bipkap}{\bipf{\ka}}
\newcommand{\partsm}{\setminus} 
\newcommand{\ydsm}{/} 
\newcommand{\nodesm}{/} 
\newcommand{\hooksm}{\setminus} 
\newcommand\colrem[1]{\check{#1}} 
\newcommand\ceta{\colrem\eta}
\newcommand\ctheta{\colrem\theta}
\newcommand\czeta{\colrem\ze}
\newcommand\cor[1]{\ka_{#1}}
\newcommand{\twoc}[1]{\cor{#1}}
\newcommand\fcor[1]{\bar{\ka}_{#1}}
\newcommand{\btwoc}[1]{\fcor{#1}}
\newcommand{\biptwoc}[2]{\bipkap_{#1, #2}}
\newcommand{\corandquot}[3]{[#1; (#2,#3)]}
\newcommand{\corandbipquot}[2]{[#1; #2]}
\newcommand{\lenabove}[2]{r}
\newcommand{\spe}[1]{\chi(#1)} 
\newcommand\jms[1]{\varphi(#1)} 
\newcommand\jmsm[1]{\operatorname{D}^{#1}} 
\newcommand\spem[1]{\operatorname{S}^{#1}}
\newcommand\dn[2]{[#1:\jms{#2}]} 
\newcommand\hsss{\hat{\mathfrak{S}}_}
\newcommand\spnorig[1]{\lan#1\ran} 
\newcommand\spn[1]{\llangle#1\rrangle} 
\newcommand\br[1]{\widebar{#1}} 
\newcommand\bspe[1]{\br{\spe{#1}}}
\newcommand\bspnorig[1]{\br{\spnorig{#1}}}
\newcommand\bspn[1]{\br{\spn{#1}}}
\newcommand\spr{spin-residue\xspace}
\newcommand\sprs{\spr{}s\xspace}
\newcommand\nd[1]{$#1$-spin-node\xspace}
\newcommand\nds[1]{\nd#1{}s\xspace}
\newcommand\spre{spin-removable\xspace}
\newcommand\sprm{\spre node\xspace}
\newcommand\sprms{\sprm{}s\xspace}
\newcommand\spad{spin-addable\xspace}
\newcommand\spam{\spad node\xspace}
\newcommand\spams{\spam{}s\xspace}
\newcommand\esprm[1]{$#1$-spin-removable node\xspace}
\newcommand\esprms[1]{$#1$-spin-removable nodes\xspace}
\newcommand\espams[1]{$#1$-spin-addable nodes\xspace}
\newcommand{\fsas}{\(4\)-stepped-and-semi\-congruent\xspace}
\newcommand{\fstepped}{\(4\)-stepped\xspace}
\newcommand{\fsemic}{\(4\)-semicongruent\xspace}
\newcommand\prp{proportional\xspace}
\newcommand\lx[1]{{#1}_\circ} 
\newcommand{\runnerswap}[2]{S_{#1}^{(#2)}} 
\newcommand\rsf{runner-swapping function\xspace}
\newcommand{\quotred}[2]{R_{#1}^{(#2)}} 
\newcommand\qrf{quotient-redistributing function\xspace}
\newcommand{\tlad}[1]{t(#1)} 
\newcommand{\flad}[1]{f(#1)} 
\newcommand{\lende}{\abs{\de}} 
\newcommand{\ulob}{\pi} 
\newcommand{\hatal}{\al\!\hooksm\!\ulob}
\newcommand{\addsondiag}[2]{\iota_{#1}(#2)} 
\newcommand{\netaddables}[2]{\Delta_{#1}#2} 
\newcommand{\diffondiag}[3]{#2-_{#1}#3} 
\newcommand\swp[2]{#1^{\ast#2}}
\newcommand{\spindecoration}[1]{\mathring{#1}}
\newcommand{\spinaddsondiag}[2]{\spindecoration{\iota}_{#1}(#2)} 
\newcommand{\netspinaddables}[2]{\spindecoration{\Delta}_{#1}#2} 
\newcommand{\bdiffondiag}[3]{#2 \ominus_{#1} #3} 
\newcommand\bswp[2]{#1^{\oast#2}}
\newcommand{\kom}[2]{k(#1\ydsm #2)} 
\newcommand{\addtoget}{\rightharpoonup}
\newcommand{\removetoget}{\leftharpoonup}
\newcommand{\interm}[2]{\mathcal{I}(#1,#2)} 
\newcommand{\intermzero}[2]{\mathcal{I}_{0}(#1,#2)}
\newcommand{\intermone}[2]{\mathcal{I}_{1}(#1,#2)}
\newcommand{\intermsize}[2]{A_{#1,#2}}
\newcommand\iet{\intermzero\eta\theta}
\newcommand\ciet{\intermzero\ceta\ctheta}
\newcommand{\coldiff}[1]{\Delta_{#1}}
\newcommand{\cdifseq}{\Delta}
\newcommand\isummand[3]{b^{#1}_{#2,#3}}
\newcommand\isum[2]{B_{#1,#2}}
\newcommand\wht{\Yfillcolour{white}}
\newcommand{\Ypossible}{\Ylinestyle{densely dotted}\Ynodecolour{gray}\Ylinecolour{gray}}
\newcommand{\Ydefault}{\Ylinecolour{black}\Ynodecolor{black}\Yfillcolour{white}\Ylinestyle{}}
\def\ladderwidth{1pt}
\def\ladderstep{2.6pt}
\tikzset{
ladder/.style={decorate,decoration={
      markings,
      mark=between positions {1/#1/2} and {-1/#1/2} step {1/#1} with {
        \draw[\colourC]
        (0,-\ladderwidth) -- (0,\ladderwidth)
        (\ladderstep,-\ladderwidth) -- (-\ladderstep,-\ladderwidth)
        (\ladderstep,\ladderwidth) -- (-\ladderstep,\ladderwidth);
        }
    },
    },
  ladder auto/.style={to path={
      let \p1=($(\tikztostart) - (\tikztotarget)$), \n1={veclen(\x1,\y1)} in
      \pgfextra{
        \pgfmathsetmacro{\bars}{int(\n1/\ladderstep/2)+1}
        \pgfinterruptpath
        \draw[ladder=\bars] (\tikztostart) -- (\tikztotarget);
        \endpgfinterruptpath
      }
    },
  },
}
\newcommand\customsize{\@setfontsize\customsize{8.2}{9.5}}
\def\journalinfo#1{\thanks{\customsize#1}}
\begin{document}

\title{Spin characters of the symmetric group which are proportional to linear characters in characteristic $2$}

\msc{20C30, 20C20, 05E10}

\author{{\Large Matthew Fayers}\\
{\normalsize Queen Mary University of London}\\
{\normalsize\texttt{\normalsize m.fayers@qmul.ac.uk}}\\[12pt]
{\Large Eoghan McDowell}\\
{\normalsize Okinawa Institute of Science and Technology}\\
{\normalsize (current: University of Bristol)}\\
\texttt{\normalsize eoghan.mcdowell@bristol.ac.uk}}

\renewcommand\auth{Matthew Fayers \& Eoghan McDowell}
\runninghead{Spin characters proportional to linear characters}

\toptitle

\journalinfo{This is the accepted manuscript for an article published in \emph{Annals of Representation Theory} 2(1) (2025), pp.~37--83, \href{https://art.centre-mersenne.org/articles/10.5802/art.21}{doi:10.5802/art.21}.}

\begin{abstract}
For a finite group, it is interesting to determine when two ordinary irreducible representations have the same $p$-modular reduction; that is, when two rows of the decomposition matrix in characteristic $p$ are equal, or equivalently when the corresponding $p$-modular Brauer characters are the same. We complete this task for the double covers of the symmetric group when $p=2$, by determining when the $2$-modular reduction of an irreducible spin representation coincides with a $2$-modular Specht module. In fact, we obtain a more general result: we determine when an irreducible spin representation has \(2\)-modular Brauer character proportional to that of a Specht module. In the course of the proof, we use induction and restriction functors to construct a function on generalised characters which has the effect of swapping runners in abacus displays for the labelling partitions.
\end{abstract}

\setcounter{tocdepth}{2} 
\tableofcontents

\section{Introduction}

When an irreducible spin representation of the symmetric group is reduced modulo \(2\), it becomes a representation of the symmetric group in the usual sense, and it is an interesting problem to say exactly which representation is obtained.
In this paper, we determine when the result is ``a multiple of a Specht module'', in the sense of their images in the Grothendieck group or equivalently of their Brauer characters.
That is, we determine when the \(2\)-modular reduction of an irreducible spin representation has Brauer character which is a multiple of that of a Specht module.

We show that the following pair of conditions on the labelling partition are necessary and sufficient for a spin representation to have a \(2\)-modular Brauer character proportional to that of a Specht module.
For a strict partition \(\al\), we say that:
\begin{itemize}
\item
\(\al\) is \emph{\fstepped} if for every part $\al_r>4$ the integer $\al_r-4$ is also a part of $\al$; and
\item
\(\al\) is \emph{\fsemic} if the odd parts of $\al$ are congruent modulo $4$.
\end{itemize}
If \(\al\) satisfies both conditions, we abbreviate this by saying that \(\al\) is \fsas.

To obtain the partition labelling the corresponding Specht module, observe that if $\al$ is \fsas then there are unique (up to reordering) $2$-core partitions $\si$ and $\tau$ such that the even parts of $\al$ comprise the partition $2(\si+\tau)$ (see \Cref{subsec:label_descs} below for details).
We define $\lx\al$ to be the partition of $|\al|$ (unique up to conjugation) whose $2$-quotient has the form $(\si,\tau)$.

We denote by \(\spe\la\) the character of the Specht module labelled by a partition \(\la\), and denote by \(\lan\al\ran\) the character of an irreducible spin representation labelled by a strict partition \(\al\) (when $\al$ has an odd number of even parts, there is actually an associate pair of characters \(\lan\al\ran_+\) and \(\lan\al\ran_-\) labelled by \(\al\), but they have the same $2$-modular reduction, so we write $\lan\al\ran$ to denote either of them).
For any ordinary character $\phi$, we write $\br\phi$ for the Brauer character of the \(2\)-modular reduction of the corresponding representation. 

Our main result is the following.

\begin{thm}\label{main}
Let \(\la\) be a partition of \(n\) and let \(\al\) be a strict partition of \(n\).
Then $\bspnorig{\al}$ is proportional to $\bspe\la$ \iff $\al$ is \fsas and $\la \in \{\lx\al, \lx\al'\}$.
In this case, $\bspnorig{\al} = 2^{\lfloor e/2\rfloor} \bspe\la$, where \(e\) is the number of even parts of \(\al\).
\end{thm}

Equality of Brauer characters occurs when \(e \in \set{0,1}\), giving the following corollary.
The \emph{double} of a strict partition \(\al\), denoted \(\dbl{\al}\), is the partition obtained by replacing every odd part \(2k+1\) with parts \(k+1\) and \(k\), and every even part \(2k\) with two parts equal to \(k\). Cores and quotients are defined in \Cref{blocksec,subsec:abacus_and_quotient}.
Given partitions \(\mu\) and \(\nu\), we write \(\mu \sqcup \nu\) for the partition obtained by combining all parts of \(\mu\) and \(\nu\) and reordering as necessary.

\begin{cory}\label{mainequal}
Let \(\la\) be a partition of \(n\) and let \(\al\) be a strict partition of \(n\).
Then $\br{\lan\al\ran} = \bspe\la$ \iff:
\begin{itemize}
    \item
\(\al = \kappa \sqcup \eta\), where \(\kappa\) is of the form \((\ldots,9,5,1)\) or \((\ldots, 11,7,3)\) and \(\eta \in \set{\emptyset, (2), (4)}\), and
    \item 
\(\la\) has \(2\)-core \(\dbl{\kappa}\) and \(2\)-quotient \((\sigma,\tau)\), where \(\sigma,\tau \in \set{\emptyset,(1)}\) and \(2(\sigma_1 + \tau_1) = \eta_1\).
\end{itemize}
\end{cory}

\cref{mainequal} implies in particular that equality of Brauer characters can occur only when the characters lie in blocks of weight \(0\), \(1\) or \(2\). In the cases of weights \(0\) and \(1\), the equality of Brauer characters in fact implies isomorphism of representations, since in this case the modular reductions remain simple.

We wish to highlight the ``runner-swapping function'' (\Cref{runnerswapping}) we introduce in the course of the proof as being of independent interest.
This function acts (up to a sign) on Specht modules by swapping a pair of runners in an abacus display for the labelling partition, or equivalently by adding all addable nodes and removing all removable nodes of a particular residue in the Young diagram (see \Cref{rsfex}, \Cref{runner-swap_application_on_charsgeneral} and \Cref{remark:runner-swapping_functor_odd_p}).
This action can be understood as the affine Weyl group acting on the set of partitions (see \cite{mfgencore}). Our approach exhibits this action as a combination of induction and restriction functors. Our expression is new; although expressing the action via induction and restriction functors has been achieved before by \cite{cr}, we believe our expression is simpler, and allows us to also compute its action also on spin representations. 

\subsection{Discussion on main problem}

Fix a prime $p$.
For an ordinary character \(\chi\) of a double cover \(\hsss n\) of the symmetric group, the Brauer character \(\br\chi\) of the \(p\)-modular reduction of the representation affording \(\chi\) can be found by restricting \(\chi\) to the \(p\)-regular conjugacy classes of \(\hsss n\).
We are interested in the question of when two ordinary irreducible characters $\chi\neq\psi$ of \(\hsss n\) can give $\br\chi=\br\psi$, or equivalently whether an ordinary irreducible character is determined by its values on $p$-regular conjugacy classes.

Our question naturally splits into three cases, two of which are (almost) already answered.
We call a representation or character of \(\hsss n\) \emph{spin} if the central element of order \(2\) acts non-trivially, and call it \emph{linear} otherwise (see \Cref{spincharsec}) (unfortunately the term \emph{linear character} can also be used to mean a character of degree \(1\); we will not use it in that sense in this paper).

\begin{itemize}
\item
The case where $\chi$ and $\psi$ are both linear characters reduces to the corresponding question for the symmetric group, which was answered by Wildon \cite[Theorem 1.1.1]{wildon2008distinctrows}: if $\la,\mu\in\scrp(n)$ are distinct, then $\bspe\la=\bspe\mu$ \iff $p=2$ and $\mu=\la'$ is the conjugate of \(\la\) (see \Cref{subsec:yd}).

\item
In the case where $\chi$ and $\psi$ are both spin characters and \(p\neq 3\), this question has been answered by the second author \cite[Theorem 1.2]{mcdowell}: $\br\chi$ and $\br\psi$ are equal \iff $\chi$ and $\psi$ are an associate pair $\lan\al\ran_+$ and $\lan\al\ran_-$, where $\al$ is a strict partition with an odd number of even parts and a part divisible by $p$.
\end{itemize}

It remains to consider the case where exactly one of $\chi$ and $\psi$ is a spin character. In this case, if $p$ is odd, it is never the case that $\br\chi=\br\psi$: the composition factors of a $p$-modular reduction of an ordinary spin representation are always spin representations, so (the modules affording) $\br\chi$ and $\br\psi$ will have no composition factors in common. Thus we restrict attention to the case $p=2$ from now on.

It will be helpful to interpret our main theorem also in terms of decomposition numbers (see \Cref{spincharsec} for details).
It is a standard fact from modular representation theory that if $\chi$ and $\psi$ are ordinary characters, then $\br\chi=\br\psi$ \iff \([\chi:\phi]=[\psi:\phi]\) for all modular irreducible characters \(\phi\).
Thus our question can be phrased as asking when two rows of the decomposition matrix are equal.

\begin{eg}
The decomposition matrix of $\hsss4$ in characteristic $2$ is given as follows, with rows corresponding to ordinary characters and columns to $2$-modular irreducibles.
\[
\begin{array}{r|cc|}\\
&\rt{(4)}
&\rt{(3,1)}
\\\hline
\spe4&1&\cdot\\
\spe{3,1}&1&1\\
\spe{2^2}&\cdot&1\\
\spe{2,1^2}&1&1\\
\spe{1^4}&1&\cdot\\
\lan4\ran_+&\cdot&1\\\
\lan4\ran_-&\cdot&1\\\
\lan3,1\ran&2&1\\\hline
\end{array}
\]
We can see that if $\chi$ and $\psi$ are distinct ordinary irreducible characters, then $\br\chi=\br\psi$ \iff $\{\chi,\psi\}=\{\spe4,\spe{1^4}\}$, $\{\chi,\psi\}=\{\spe{3,1},\spe{2,1^2}\}$, or $\{\chi,\psi\}\subset\{\spe{2^2},\lan4\ran_+,\lan4\ran_-\}$.
Of these:
\begin{enumerate}[(i)]
    \item 
the pairs of linear characters \(\{\spe4,\spe{1^4}\}\) and \(\{\spe{3,1},\spe{2,1^2}\}\) are conjugate pairs, as predicted by \cite{wildon2008distinctrows};
    \item
the pair of spin characters \(\{\lan4\ran_+, \lan4\ran_-\}\) is an associate pair labelled by a partition with a part divisible by \(2\), as predicted by \cite{mcdowell};
    \item
the equality of \(\bspe{2^2}\) with \(\br{\lan4\ran_+}\) and \(\br{\lan4\ran_-}\) is predicted by our main theorem.
\end{enumerate}
\end{eg}

In fact, it turns out to be rare to have $\br\chi=\br\psi$ when only one of $\chi$ and $\psi$ is a spin character, because the spin characters typically have much larger degrees than the characters of the Specht modules. So we address a more general question in this paper: when are $\br\chi$ and $\br\psi$ proportional to each other? In other words, when is one row of the decomposition matrix proportional to another? Given two characters $\chi$ and $\psi$ whose $2$-modular reductions are proportional, we can easily find the constant of proportionality using the regularisation theorems (see \Cref{subsec:regularisation} and \Cref{lemma:first_consequences}), and therefore extract the answer to our original question. Working with proportionality will actually make many of our calculations easier, because we will able to neglect constants occurring.

Having broadened our scope to proportionality, one may ask whether \(p\)-modular reductions of pairs of linear characters, or of pairs of spin characters, can be proportional.
The answer, except for pairs of spin characters when \(p=3\), is ``no'': by a straightforward extension of their arguments, the results cited above that classify equality between such pairs also rule out non-trivial proportionality. Thus our result is in fact a characterisation of all proportional pairs of \(p\)-modular characters of \(\hsss n\) for \(p \neq 3\). For $p=3$, it is possible for the $p$-modular reductions of irreducible spin characters to be proportional but unequal: for example, $\br{\lan6,3,2\ran}=2\br{\lan8,2,1\ran}$.

\subsection{Description of the labelling partitions}
\label{subsec:label_descs}

We elaborate on the characterisation of a \fsas strict partition \(\al\) and the definition of the corresponding \(\lx{\al}\) given at the beginning of the introduction.
We use the notions of \(2\)-core, \(2\)-quotient and \fbc defined in \Cref{blocksec,subsec:abacus_and_quotient}, as well as the notation \(\twoc{a}\) for the \(2\)-core partition with largest part \(a\), and \(\btwoc{a}\) for the \fbc strict partition with largest part \(2a-1\), satisfying \(\dbl{\btwoc{a}} = \twoc{a}\).
We also use the operations defined on partitions \(\sqcup\), \(+\), and integer multiplication on partitions, as defined in \Cref{subsec:partitions}.

Suppose $\al$ is \fsas, and write $\al=\ga\sqcup2\be$, where $\ga$ is the strict partition comprising the odd parts of $\al$.
From the \fsas property, it is clear that $\ga$ is a \fbc, and in particular is the \fbc of $\al$, so we can write \(\ga = \btwoc{a}\) for some \(a \geq 0\).
Meanwhile, using the \(4\)-stepped property again, we have
\[
\beta = (2m,2m-2,\dots,2)\sqcup(2k-1,2k-3,\dots,1)
\]
for some $m,k\gs0$. If we let
\(
r=m+k\) and \(s=|m-k+\tfrac12|-\tfrac12
\),
then this becomes
\[\be=\twoc r+\twoc s.
\]
Thus the \fsas strict partitions are precisely the partitions of the form $\btwoc{a}\sqcup2(\twoc r+\twoc s)$ for \(a \geq 0\) and $r, s \geq 0$.

\begin{defn}
Given a \fsas partition \(\al =\btwoc{a}\sqcup2(\twoc r+\twoc s)\), we define \(\lx\al\) up to conjugacy by
\[
    \lx{(\btwoc{a} \sqcup 2(\twoc{r}+\twoc{s}))} = \corandquot{\twoc{a}}{\twoc{r}}{\twoc{s}}
\] 
where  \(\corandbipquot{\kappa}{\bipla}\) denotes the partition with \(2\)-core \(\kappa\) and \(2\)-quotient \(\bipla\).
\end{defn}

Note the conjugate is \(\corandquot{\twoc{a}}{\twoc{r}}{\twoc{s}}' = \corandquot{\twoc{a}}{\twoc{s}}{\twoc{r}}\), so indeed this is well-defined up to conjugacy.

With this notation, our main theorem \Cref{main} claims that
\[
    \bspnorig{\btwoc{a} \sqcup 2(\twoc{r} + \twoc{s})}
    = 2^{\floor{\frac{\max\{r,s\}}{2}}} \bspe{\corandquot{\twoc{a}}{\twoc{r}}{\twoc{s}}}
    = 2^{\floor{\frac{\max\{r,s\}}{2}}} \bspe{\corandquot{\twoc{a}}{\twoc{s}}{\twoc{r}}},
\]
and that these are the only such instances of proportionality between linear and spin characters.

\begin{eg}
Let $\al=(12,8,7,4,3,2)$.
Then \(\al\) is a \fsas strict partition, and we can write $\al = \btwoc{4}\sqcup2(\twoc 4+\twoc 2)$.
Then the partition $\lx\al$ is either $(12,9,6,3^2,1^3)$ or its conjugate $(8,5^2,3^3,2^3,1^3)$, having \(2\)-core \(\twoc{4}\) and \(2\)-quotient \((\twoc{2},\twoc{4})\) or \((\twoc{4},\twoc{2})\) respectively, as can be verified from their \abds below.
\[
\corandquot{\twoc{4}}{\twoc{2}}{\twoc{4}}
\ = \
\abacus(lr,nb,bb,nn,bb,nn,nb,nn,nb,nn,nb)
\ \xleftrightarrow{'} \
\abacus(lr,nb,bb,nb,bb,nb,bb,nn,bb,nn,nb)
\ = \
\corandquot{\twoc{4}}{\twoc{4}}{\twoc{2}}
\]
Our main theorem claims that \(\bspnorig{12,8,7,4,3,2} = 4  \bspe{12,9,6,3^2,1^3} = 4 \bspe{8,5^2,3^3,2^3,1^3}\).
\end{eg}

\subsection{Structure of paper}

Relevant background is collected in \Cref{sec:background}.
The proofs of the ``if'' and ``only if'' directions of our main theorem are logically independent, and can be read in either order.

The proof of the ``only if'' direction (that is, that a spin character proportional to a linear one modulo \(2\) must be labelled by a \fsas strict partition) is contained in \Cref{sec:first_results,sec:proportional_implies_fsas-only_if}.
The proof is by induction, using Robinson's induction and restriction functors and the Murnaghan--Nakayama--Morris rules to constrain the partition labelling a proportional character.

The proof of the ``if'' direction (that is, that \fsas strict partitions do label spin characters proportional to linear ones modulo \(2\)) is contained in \Cref{sec:fsas_implies_proportional-if,sec:runner-swapping,sec:quotient-redistributing}.
We first show the claim for \emph{homogeneous} spin characters, then use certain combinations of Robinson's induction and restriction functors to propagate the property of being proportional.

\begin{acks}
The authors worked on this project during the second author's visit to Stacey Law at the University of Cambridge; we are grateful to the Heilbronn Institute for supporting that visit and to the Centre for Mathematical Sciences for providing facilities.
\end{acks}

\section{Background}
\label{sec:background}

In this paper, $\bbn$ denotes the set of positive integers.
If $\ep\in\{0,1\}$, we write $\bep=1-\ep$.

\subsection{Partitions and Young diagrams}

We recall some simple combinatorial concepts relating to representations of symmetric groups.

\subsubsection{Partitions}
\label{subsec:partitions}

A \emph{partition} is an infinite weakly-decreasing sequence $\la=(\la_1,\la_2,\dots)$ of integers which are eventually zero. The integers $\la_1,\la_2,\dots$ are the \emph{parts} of $\la$. Given a partition $\la$ and a natural number $a$, we write $a\in\la$ to mean that $a$ is a part of $\la$.

We write $\card\la$ for the sum $\la_1+\la_2+\dots$, and say that $\la$ is a partition of $\card\la$. We write $\scrp$ for the set of all partitions, and \(\scrp(n)\) for the set of partitions of \(n\). When writing partitions, we omit trailing zeroes and group together equal parts with a superscript. We write $\vn$ for the unique partition of $0$. For any partition $\la$, we write $\len{\la}$ for the number of non-zero parts of $\la$, which we call the \emph{length} of $\la$.

We use some natural notation for combining partitions.
Given partitions \(\la\) and \(\mu\), we write:
\begin{itemize}
\item
$a\la = (a\la_1,a\la_2,\dots)$ for \(a \in \bbn\);
\item
$\la+\mu = (\la_1+\mu_1,\la_2+\mu_2,\dots)$;
\item
$\la\sqcup\mu$ for the partition obtained by combining all the parts of $\la$ and $\mu$ and writing them in decreasing order;
\item 
\(\la\partsm\mu\) for the partition obtained by retaining only those parts of \(\la\) which are not parts of \(\mu\).
\end{itemize}

A \emph{\bip} is an ordered pair $\bipla=(\la^{(0)},\la^{(1)})$ of partitions, which we call the \emph{components} of $\bipla$. We write $\card\bipla=\card{\la^{(0)}}+\card{\la^{(1)}}$, and say that $\bipla$ is a \bip of $\card\la$.

\subsubsection{Young diagrams}
\label{subsec:yd}

The \emph{Young diagram} of a partition $\la$ is the set
\[
[\la]=\lset{(r,c)\in\bbn^2}{c\ls\la_r},
\]
drawn in the plane using the English convention. We call the elements of the Young diagram the \emph{nodes} of $\la$. In general, a node means an element of $\bbn^2$.
We may not distinguish between a partition and its Young diagram; for example, given partitions \(\la\) and \(\mu\), we write:
\begin{itemize}
    \item 
\(\la \cap \mu\) for the partition with Young diagram the intersection of the Young diagrams of \(\la\) and \(\mu\);
    \item 
\(\mu \subseteq \la\) to mean that every node of \(\mu\) is also a node of \(\la\);
    \item 
\(\la \ydsm \mu\) for the set of nodes of \(\la\) which are not nodes of \(\mu\), provided that \(\mu \subseteq \la\).
\end{itemize}

The \emph{conjugate} of a partition \(\la\), denoted \(\la'\), is the partition whose Young diagram is obtained by reflecting that of \(\la\) in the main diagonal.
Thus the parts of \(\la'\) are the lengths of the columns of \(\la\).

A node $(r,c)$ of $\la$ is \emph{removable} if it can be removed from $[\la]$ to leave the Young diagram of a smaller partition (that is, if $c=\la_r>\la_{r+1}$), while a node not in $\la$ is an addable node of $\la$ if it can be added to $[\la]$ to produce a Young diagram of a larger partition.
The \emph{rim} of a partition $\la$ is the set of nodes $(r,c)\in[\la]$ such that $(r+1,c+1)\notin[\la]$. All removable nodes lie in the rim, but not all nodes in the rim are removable.
We denote by \(\la^{-\eps}\) the partition obtained by removing all removable \(\eps\)-nodes.

If $\la,\mu\in\scrp(n)$, then we say that $\la$ \emph{dominates} $\mu$ (and write $\la\dom\mu$) if
\[
\la_1+\dots+\la_r\gs\mu_1+\dots+\mu_r
\]
for every $r\gs0$. Another way of saying this is that $[\mu]$ can be obtained from $[\la]$ by moving some nodes to lower positions.

\subsubsection{Residues, diagonals and ladders}
\label{subsubsec:residues_and_ladders}

The \emph{residue} of a node $(r,c)$ is the residue of $c-r$ modulo $2$. For $\ep\in\{0,1\}$, an \emph{$\ep$-node} means a node of residue $\ep$.

For \(d\) in \(\bbz\), the $d$th \emph{diagonal} in $\bbn^2$ is the set of all nodes $(r,c)$ for which $c-r=d$ (that is, a top-left to bottom-right line of nodes of constant residue).
We write $\cald_d$ for the \(d\)th diagonal.

For $l\in\bbn$, the $l$th \emph{ladder} in $\bbn^2$ is the set of all nodes $(r,c)$ for which $r+c=l+1$ (that is, a bottom-left to top-right line of nodes of constant residue).
We write $\lad l$ for the $l$th ladder, and say that $\lad m$ is \emph{larger} than $\lad l$ when $m>l$.

If $\la\in\scrp$, the \emph{$2$-regularisation} of $\la$ is the partition obtained by sliding the nodes of $\la$ as far up their ladders as they will go. We write this partition as $\la\reg$. It is easy to see that if $\la,\mu\in\scrp$ then $\la\reg=\mu\reg$ \iff $\card{\la\cap\lad l}=\card{\mu\cap\lad l}$ for every $l$.

\subsection{Strict partitions}
\label{background:strict_shifted_spin}

Now we outline the combinatorial notions relating to spin representations of the double covers of the symmetric group.

A \emph{strict partition} is a partition \(\al\) such that $\al_i>\al_{i+1}$ for all $i\ls\len\al$.
We write $\scrd$ for the set of all strict partitions, and $\scrd(n)$ for the set of strict partitions of $n$.

As in the introduction, the \emph{double} of a strict partition $\al$ is the partition
\[
\dbl\al=\left(\roundup{\mfrac{\al_1}2},\inp{\mfrac{\al_1}2},\roundup{\mfrac{\al_2}2},\inp{\mfrac{\al_2}2},\dots\right).
\]

The \emph{\spr} of a node $(r,c)$ is the residue of $\inp{c/2}$ modulo $2$. For $\ep\in\{0,1\}$, an \emph{\nd\ep} means a node of \spr $\ep$.

A node \((r,c)\) of \(\al\) is \emph{\spre} if it can be removed, possibly together with other nodes of the same \spr, to leave a strict partition.
A node $(r,c)$ which is not a node of $\al$ is \emph{\spad} if it can be added, possibly together with other nodes of the same \spr, to obtain a strict partition.
An \emph{\esprm\ep} means a \spre \nd\ep, and likewise with ``addable'' in place of ``removable''.
We denote by \(\al^{-\eps}\) the strict partition obtained by removing from \(\al\) all \esprms\eps.

For example, let $\al=(7,5,4,1)$. The \spr{}s of the nodes of $\al$ are illustrated in the following diagram.
\[
\young(0110011,01100,0110,0)
\]
The \esprms0 are $(2,5)$, $(3,4)$ and $(4,1)$, and \(\al^{-0} = (7,4,3)\); the \espams0 are $(1,8)$ and $(1,9)$.

\subsection{Irreducible characters of the symmetric group and its double cover}\label{spincharsec}

For $n\gs1$, let $\hsss n$ denote the group with generators $t_1,\dots,t_{n-1},z$ and defining relations
\begin{align*}
t_1^2=\dots=t_{n-1}^2&=z^2=1,
\\
t_iz&=zt_i&&\text{for }1\ls i\ls n-1,
\\
t_it_{i+1}t_i&=t_{i+1}t_it_{i+1}&&\text{for }1\ls i\ls n-2,
\\
t_it_j&=zt_jt_i&&\text{for }1\ls i\ls j-1\ls n-2.
\end{align*}
Then $\hsss n$ is a double cover of the symmetric group $\sss n$, and is a Schur cover of $\sss n$ provided $n\gs4$. The ordinary representation theory of $\hsss n$ is described in detail in the book by Hoffman and Humphreys \cite{hohum}, and the modular theory is addressed in Kleshchev's book \cite{kleshbook}. We will be concerned with representations in characteristic $0$ and characteristic $2$. (We remark that there is another double cover of $\sss n$, obtained by replacing the relations $t_i^2=1$ with $t_i^2=z$; our results apply equally to either double cover.)

The element $z$ is a central involution, so acts as a scalar $\pm1$ on any irreducible representation (over any field). The irreducible representations on which $z$ acts as $1$ are precisely the lifts of the irreducible representations of $\sss n$. Irreducible representations on which $z$ does not act as the identity are called \emph{spin representations}, and (in characteristic $0$) their characters are called \emph{spin characters}.
The irreducible characters which are not spin characters are called \emph{linear characters}
.
We write $\ip\ \ $ for the usual inner product on ordinary characters, extended bilinearly to generalised characters.

The classification of ordinary irreducible representations of $\sss n$ is very well known. For $\la\in\scrp(n)$, let $\spem\la$ denote the \emph{Specht module} defined over a field of characteristic $0$ (see \cite[Section 4]{jbook}, for example), and let $\spe\la$ denote its character. Then the set $\lset{\spe\la}{\la\in\scrp(n)}$ is a complete irredundant list of ordinary irreducible characters of $\sss n$. We will treat $\spem\la$ as a module for $\hsss n$ by letting $z$ act as the identity, and correspondingly treat $\spe\la$ as a character for $\hsss n$.

The classification of ordinary irreducible spin characters of $\hsss n$ goes back to Schur \cite{schu} (although, remarkably, a construction for the corresponding representations was not found until 1990, by Nazarov \cite{naz}). Let $\al\in\scrd(n)$, and say that $\al$ is \emph{even} or \emph{odd} as the number of positive even parts of $\al$ is even or odd. If $\al$ is even, then Schur constructs an irreducible character $\lan\al\ran$ corresponding to $\al$. If $\al$ is odd, then Schur gives two characters $\lan\al\ran_+$ and $\lan\al\ran_-$ corresponding to $\al$. These characters together give a complete irredundant list of ordinary irreducible spin characters of $\hsss n$.

For studying $2$-modular reductions, it will be helpful to adopt some notation introduced in \cite{fayers20spin2alt} for spin characters.
For any $\al\in\scrd$, we define the generalised character
\[
\spn\al=
\begin{cases}
\lan\al\ran&\text{if $\al$ is even},
\\
\frac1{\sqrt2}(\lan\al\ran_++\lan\al\ran_-)&\text{if $\al$ is odd}.
\end{cases}
\]
Using the generalised characters $\spn\al$ simplifies several results for spin representations, such as Schur's degree formula \cite{schu}, the Bessenrodt--Olsson regularisation theorem (\cref{spinregn} below), and the branching rule (\cref{spinbranch} below).
Since the \(2\)-modular reductions of $\lan\al\ran_+$ and $\lan\al\ran_-$ are equal (as they differ only on elements of even order), and since we are concerned only with proportionality, there is no harm in working with these generalised characters $\spn\al$.

We also need to recall the classification of irreducible $\hsss n$-modules in characteristic $2$. Obviously in characteristic $2$ there are no irreducible spin representations, so the irreducible representations of $\hsss n$ are precisely the lifts of the irreducible representations of $\sss n$. These are afforded by the \emph{James modules} $\jmsm\mu$ for $\mu\in\scrd$: the James module $\jmsm\mu$ can be constructed as the unique simple quotient of the Specht module $\spem\mu$. We write $\jms\mu$ for the Brauer character of $\jmsm\mu$, and for an ordinary character $\chi$, we define $\dn\chi\mu$ to be the multiplicity of $\jms\mu$ as a composition factor of a $2$-modular reduction of $\br\chi$. These multiplicities, as $\chi$ ranges over the ordinary irreducible characters, are the \emph{decomposition numbers} for $\hsss n$.

\subsection{Hooks, bars and character values}
\label{charvalsec}

In this section we recall the Murnaghan--Nakayama rule for computing character values for the symmetric group, and its spin version, proved by Morris. In fact for our purposes we can make do with very simplified versions of these rules.

\subsubsection{Hooks and the Murnaghan--Nakayama rule}

We begin by recalling the well-known concept of rim-hooks of partitions.
Take \(k\in \bbn\) and \(\la\) a partition. A \emph{\(k\)-rim-hook}, or rim-hook of length \(k\), is a connected subset of the rim of $\la$ comprising $k$ nodes which can be removed to leave the Young diagram of a smaller partition. For a rim-hook \(\de\) in \(\la\), the smaller partition obtained by removing \(\de\) is denoted \(\la \hooksm \de\).

Let $\rh\la k$ be the set of partitions that can be obtained by removing a \(k\)-rim-hook from $\la$. Given $\nu\in\scrp(n)$, let $\spe\la(\nu)$ be the value of the character $\spe\la$ on elements of $\sss n$ of cycle type $\nu$. Now a simplified version of the Murnaghan--Nakayama rule (see for example \cite[21.1]{jbook}) can be stated as follows.

\begin{thm}\label{murnak}
Suppose $\la\in\scrp(n)$ and $k\in\bbn$. For each $\mu\in\rh\la k$ there is a non-zero constant $c_\mu$ such that for any $\nu\in\scrp(n-k)$,
\[
\spe\la(\nu\sqcup(k))=\sum_{\mu\in\rh\la k}c_\mu\spe\mu(\nu).
\]
\end{thm}

\subsubsection{Odd bars and Morris's rule}

Now we consider spin characters, and explain Morris's rule (as given by Hoffman and Humphreys \cite[Theorem 10.1]{hohum}). Because we are concerned with $2$-modular Brauer characters in this paper, we need only consider $2$-regular conjugacy classes in $\hsss n$. In fact these are in one-to-one correspondence with $2$-regular conjugacy classes in $\sss n$: for each $2$-regular conjugacy class $C$ in $\sss n$, there are two conjugacy classes in $\hsss n$ of elements that map to $C$ under the quotient map $\hsss n\to\sss n$, and only one of these consists of elements of odd order. So $2$-regular conjugacy classes in $\hsss n$ can be labelled by partitions of $n$ into odd parts. Given $\al\in\scrd$ and a partition $\nu$ into odd parts, we write $\spn\al(\nu)$ for the value of $\spn\al$ on the $2$-regular conjugacy class corresponding to $\nu$.

The appropriate analogue of rim-hooks for strict partitions is \emph{bars}. Take $k\in\bbn$ odd and \(\al\in\scrd\). A \emph{$k$-bar} in $\al$ is one of the following:
\begin{itemize}
\item
a part of size equal to $k$;
\item
a pair of parts whose sizes sum to $k$;
\item
the last \(k\) nodes in a part of size $\al_r>k$ such that $\al_r-k$ is not already a part of $\al$.
\end{itemize}
Removing a \(k\)-bar from \(\al\) means removing the bar and, if necessary, reordering the remaining parts to form a strict partition.

Let $\rb\al k$ denote the set of strict partitions that can be obtained from $\al$ by removing a $k$-bar. Now a simplified version of Morris's rule can be stated as follows.

\begin{thm}\label{spinmurnak}
Suppose $\al\in\scrd(n)$ and $k\in\bbn$ is odd. For each $\be\in\rb\al k$ there is a non-zero constant $d_\be$ such that for any partition $\nu$ of \(n-k\) into odd parts,
\[
\spn\al(\nu\sqcup(k))=\sum_{\be\in\rb\al k}d_\be\spn\be(\nu).
\]
\end{thm}

\subsection{Blocks}\label{blocksec}

The $2$-block structure of $\hsss n$ was determined by Bessenrodt and Olsson \cite{bessenrodt97blocksofdoublecovers}, extending the classical result of Brauer and Robinson (the ``Nakayama Conjecture'') for $\sss n$. We summarise the essential points here.

\subsubsection{Cores and weights}

Suppose $\la\in\scrp$ and $k\in\bbn$. The \emph{$k$-core} of $\la$ is the partition obtained by successively removing \(k\)-rim-hooks until no more remain. It is well-known that this partition is independent of the choice of \(k\)-rim-hook removed at each stage. The \emph{$k$-weight} of $\la$ is the number of \(k\)-rim-hooks removed to reach the $k$-core, or equivalently the number of rim-hooks of $\la$ of size divisible by $k$. By the Murnaghan--Nakayama rule (\Cref{murnak}), the $k$-weight of $\la$ is the maximum $w$ such that the character value $\spe\la(k^w,1^{n-kw})$ is non-zero.

When \(k=2\), it is easy to see that the $2$-cores are precisely the partitions of the form $(r,r-1,\dots,1)$ for $r\gs0$. In this paper we will write $\cor r=(r,r-1,\dots,1)$.

The Brauer--Robinson Theorem (which applies equally well for $\hsss n$) says that $\spe\la$ and $\spe\mu$ lie in the same $k$-block \iff $\la$ and $\mu$ have the same $k$-core. This result also gives the distribution of the irreducible Brauer characters $\jms\mu$ into blocks, since $\jms\mu$ is a composition factor of $\bspe\mu$.

This means that we can define the \emph{\(k\)-core} and \emph{\(k\)-weight} of a block of $\hsss n$ to be the common $k$-core and $k$-weight of the partitions $\la$ labelling characters $\spe\la$ in that block.

\subsubsection{Bar-cores and bar-weights}

Now we come to spin characters. For \(\al\in\scrd\) and \(k\) odd, the \emph{$k$-bar-core} of \(\al\) is the strict partition obtained by successively removing \(k\)-bars until none remain, and the \emph{$k$-bar-weight} of $\al$ is the number of \(k\)-bars removed to reach the $k$-bar-core. By Morris's rule (\Cref{spinmurnak}), the $k$-bar-weight of $\al$ is the maximum $w$ such that the character value $\spn\al(k^w,1^{n-kw})$ is non-zero.

To work in characteristic \(2\), we require a slightly different definition:
for $\al\in\scrd$, the \emph{\fbc} of $\al$ is obtained by repeatedly applying moves of the following types:
\begin{itemize}
\item
deleting an even part;
\item
deleting two parts whose sum is a multiple of $4$;
\item
replacing an odd part $\al_r > 4$ with $\al_r-4$, if $\al_r-4$ is not already a part of $\al$.
\end{itemize}
(This permits removing parts of size equal to \(2\) in addition to ``\(4\)-bars'' defined analogously to the odd case.)
The $4$-bar-weight of \(\al\) is the total number of nodes removed in this way divided by \(2\).

The possible \fbcs are the partitions $(4t-1,4t-5,\dots,3)$ and $(4t+1,4t-3,\dots,1)$ for $t\gs0$. For $r\gs1$, we write $\btwoc r$ for the \fbc with largest part $2r-1$, and we set $\btwoc0=\vn$.
Note that the function $\tau\mapsto\dbl\tau$ gives a bijection from the set of \fbcs to the set of $2$-cores, sending $\btwoc r$ to $\cor r$ for every $r$. (The inverse of this bijection is given by $\nu\mapsto\dbl\nu'$.)

Now we can describe the distribution of spin characters into blocks \cite[Theorem 4.1]{bessenrodt97blocksofdoublecovers}: suppose $\al\in\scrd$, and let $\gamma$ be the \fbc of $\al$; then the spin character $\spn\al$ lies in the \(2\)-block of $\hsss n$ with \(2\)-core $\dbl\gamma$.

\subsubsection{RoCK blocks}

Say that the block with \(2\)-core \(\twoc{a} = (a,a-1, \ldots, 1)\) and \(2\)-weight $w$ is \emph{RoCK} if $a \gs w-1$. RoCK blocks, which are often also called \emph{Rouquier blocks}, are particularly well understood, and part of the proof of our main theorem will involve reducing to the case of RoCK blocks.

\subsubsection{Regularisation theorems}
\label{subsec:regularisation}

A very useful tool in the study of decomposition numbers for the symmetric groups is James's regularisation theorem \cite[Theorem A]{james_regn}. For the double cover $\hsss n$ in characteristic $2$, a useful analogue was proved by Bessenrodt and Olsson \cite[Theorem 5.2]{bessenrodt97blocksofdoublecovers}. 
These results are as follows, where the double of a strict partition is as defined in \Cref{background:strict_shifted_spin} and the regularisation of a partition is as defined in \Cref{subsubsec:residues_and_ladders}.

\begin{thm}\label{regn}
Suppose $\la\in\scrp$.
Then $\dn{\spe\la}{\la\reg}=1$, while $\dn{\spe\la}\mu=0$ unless $\mu\dom\la\reg$.
\end{thm}

\begin{thm}\label{spinregn}
Suppose $\al\in\scrd$, and let $e$ be the number of even parts of $\al$. Then $\dn{\spn\al}{\dbl{\al}\reg}=2^{e/2}$, while $\dn{\spn\al}\mu=0$ unless $\mu \dom \dbl{\al}\reg$. 
\end{thm}

\subsubsection{Content and spin-content}

The block classification can alternatively be expressed in terms of residues of nodes. Define the \emph{content} of a partition $\la$ to be the multiset of residues of its nodes. It was proved by Littlewood that two partitions of $n$ have the same $2$-core \iff they have the same content \cite[p.~347]{littlewood51}. So given a block of $\hsss n$, we may define its content to be the common contents of the partitions labelling characters $\spe\la$ in the block.

To extend this idea to spin characters, define the \emph{spin-content} of a strict partition $\al$ to be the multiset of \sprs of its nodes. Then $\spn\al$ lies in the block whose content coincides with the spin-content of $\al$ (as can be shown, for example, by observing that the spin-content of a partition equals the content of its double \cite[Lemma~3.9]{bessenrodt97blocksofdoublecovers} and using \Cref{spinregn} above).

\subsection{Abacus displays and 2-quotients}
\label{subsec:abacus_and_quotient}

Given $\la\in\scrp$, take an integer $r\gs\len\la$, and define the \emph{beta-numbers} $\be_i=\la_i+r-i$ for $i=1,\dots,r$. Take an abacus with two vertical runners; label these runners $0$ and $1$ from left to right. Mark positions $0,2,4,\dots$ from the top down on runner $0$, and $1,3,5,\dots$ from the top down on runner $1$. Now place $r$ beads on the abacus, at positions $\be_1,\dots,\be_r$. The resulting configuration is the \emph{$r$-bead \abd} for $\la$.

The \emph{\(2\)-quotient} of a partition \(\la\) is a bipartition constructed from an abacus display for \(\la\) by viewing each runner in isolation as a \(1\)-runner abacus and reading off the corresponding partitions. This requires a choice of convention with regard to the number of beads used in the abacus display and which runner corresponds to which component of the bipartition. In this paper we follow a slightly unusual convention: we choose the number of beads \(r\) so that runner \(1\) has at least as many beads as runner \(0\) (or equivalently so that the number of beads is congruent modulo $2$ to the length of the $2$-core of $\la$), and then define the \(\eps\)th component of the \(2\)-quotient from the \(\eps\)th runner (explicitly, the \(2\)-quotient is \((\la^{(0)}, \la^{(1)})\) where $\la^{(\ep)}_i$ is the number of empty spaces above the $i$th lowest bead on runner $\ep$). This can alternatively be described as follows: given any \(r\)-bead abacus display, let the \(1\)st component \(\la^{(1)}\) correspond to the runner with strictly more beads if such a runner exists, or to runner \(1\) if the numbers of beads on the runners are equal.

(The more usual convention for the $2$-quotient (see for example \cite[Section 2.7]{jk}) is to take $r$ always to be even and let the \(\eps\)th component \(\la^{(\eps)}\) correspond to runner \(\eps\); our choice will be more convenient when we consider RoCK partitions. The analogue for odd primes of our convention is used in \cite{fayers2016}, where it is called the \emph{ordered $p$-quotient}.)

It is well-known (and easy to see on the abacus) that a partition is defined by its $2$-core and $2$-quotient. Conversely, given a $2$-core $\ka$ and a \bip $\bipla=(\la^{(0)}, \la^{(1)})$, there is a partition with $2$-core $\ka$ and $2$-quotient $\bipla$. We write this as $\corandquot\ka{\la^{(0)}}{\la^{(1)}}$.

\subsection{Induction and restriction}\label{indressec}

Induction and restriction between symmetric groups of different sizes is one of the most important tools in the representation theory of these groups, and the same applies to the double covers. Robinson's \(i\)-induction and -restriction functors, which give more powerful results in positive characteristic, likewise extend to \(\hsss{n}\). We summarise the results we will need here, following the account in \cite{fayers18spin2}, where references and further details can be found.

Given a character $\chi$ for $\hsss n$, we write $\chi{}\downarrow_{\hsss{n-1}}$ for its restriction to $\hsss{n-1}$, and $\chi{}\uparrow^{\hsss{n+1}}$ for the induced character for $\hsss{n+1}$. Now suppose $\chi$ lies in a single block, and write the content of this block as $\{0^a,1^b\}$. Define $e_0\chi$ to be the component of $\chi{}\downarrow_{\hsss{n-1}}$ lying in the block with content $\{0^{a-1},1^b\}$ if there is such a block, and $e_0\chi=0$ otherwise. Define $f_0\chi$ to be the component of $\chi{}\uparrow^{\hsss{n+1}}$ lying in the block with content $\{0^{a+1},1^b\}$ if there is such a block, or $0$ otherwise. Define the characters $e_1\chi$ and $f_1\chi$ similarly. It follows from the branching rules for $\hsss n$ and the block classification that $\chi{}\downarrow_{\hsss{n-1}}=e_0\chi+e_1\chi$ and $\chi{}\uparrow^{\hsss{n+1}}=f_0\chi+f_1\chi$ for any $\chi$.

Fix $\ep\in\{0,1\}$ for the rest of this subsection. The functor $e_\ep$ is defined for any $n>0$, so we can define the power $e_\ep^r$ and the divided power $e_\ep^{(r)}=e_\ep^r/r!$, and similarly for $f_\ep$.
These functors can be applied either to ordinary characters or $2$-modular Brauer characters, and commute with reduction modulo $2$.

The classical branching rule for induction and restriction of ordinary irreducible characters of symmetric groups extends to the double covers in characteristic $2$, and can be phrased in terms of the functors $e_i$ as follows.

\begin{thm}\label{branching}
Suppose $\la\in\scrp(n)$ and $\mu\in\scrp(n-r)$, and $\ep\in\{0,1\}$. If $\mu$ is obtained from $\la$ by removing $r$ $\ep$-nodes, then $\ip{e_\ep^{(r)}\spe\la}{\spe\mu}=\ip{f_\ep^{(r)}\spe\mu}{\spe\la}=1$; otherwise $\ip{e_\ep^{(r)}\spe\la}{\spe\mu}=\ip{f_\ep^{(r)}\spe\mu}{\spe\la}=0$.
\end{thm}

The branching rule for spin characters is given as follows (slightly rephrased from \cite{fayers20spin2alt}).

\begin{thm}[{\cite[Proposition 4.11]{fayers20spin2alt}}]\label{spinbranch}
Suppose $\al\in\scrd(n)$ and $\be\in\scrd(n-r)$, and $\ep\in\{0,1\}$. Then $\ip{e_\ep^{(r)}\spn\al}{\spn\be}$ and $\ip{f_\ep^{(r)}\spn\be}{\spn\al}$ are non-zero \iff $\be$ is obtained by removing $r$ \nds\ep from $\al$.
If this is the case, let $b$ be the number of values $c\gs2$ such that $\al\ydsm\be$ contains a node in column $c$ but does not contain any nodes in columns $c-1$ and $c+1$. Then
\[
\ip{e_\ep^{(r)}\spn\al}{\spn\be}=\ip{f_\ep^{(r)}\spn\be}{\spn\al}=2^{b/2}.
\]
\end{thm}

We remark that the integer $b$ in \cref{spinbranch} can also be characterised as the number of even integers which are parts of $\al$ or $\be$ but not both.

\section{First results on proportionality}
\label{sec:first_results}

Our first step towards proving the ``only if'' direction of the main theorem, as well as establishing the claimed constant of proportionality, is to 
examine the consequences of the regularisation theorems \cref{regn,spinregn}.
To say that $\bspe\la$ is proportional to $\bspn\al$ is to say that there is a scalar $a$ such that $\dn{\spn\al}\mu=a\dn{\spe\la}\mu$ for every $\mu\in\scrd$.
The regularisation theorems substantially restrict the possible pairs $(\la,\al)$ for which this can happen, as well as determining the constant $a$.
The following result is immediate from \cref{regn,spinregn}.

\begin{lemma}
\label{lemma:first_consequences}
Suppose \(\la\in\scrp\) and \(\al\in\scrd\), with \(\bspn{\al} \propto \bspe{\la}\). Then:
\begin{enumerate}[(i)]
    \item\label{item:2-reg}
\(\la\reg = \dbl{\al}\reg\)
(that is, \(\la\) can be obtained from \(\dbl{\al}\) by sliding nodes along the \(2\)-ladders);
    \item\label{item:multiple}
\(\bspn{\al} =  2^{e/2} \bspe{\la}\), where \(e\) is the number of even parts of \(\al\).
\end{enumerate}
\end{lemma}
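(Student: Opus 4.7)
The plan is to read off both conclusions directly from the two regularisation theorems by comparing the ``leading term'' of each character in dominance order.

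Writing the hypothesis explicitly, there exists a scalar $c$ with $[\spn{\al}:\jms{\mu}] = c\,[\spe{\la}:\jms{\mu}]$ for every $\mu \in \scrd$. For part~(i), I would compare the maximal partitions (with respect to the dominance order) that index a nonzero decomposition number on each side. By \cref{spinregn} the maximum such $\mu$ on the left is $\dbl{\al}\reg$ (with the value of $[\spn\al:\jms\mu]$ there being $2^{e/2}\neq 0$), and by \cref{regn} the maximum such $\mu$ on the right is $\la\reg$ (with value $1$). Since these maxima must coincide, $\la\reg = \dbl{\al}\reg$.

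For part~(ii), having established that $\la\reg = \dbl{\al}\reg$, I would simply evaluate the proportionality relation at $\mu=\la\reg$: the left-hand side gives $2^{e/2}$ by \cref{spinregn}, and the right-hand side gives $c\cdot 1$ by \cref{regn}, so $c = 2^{e/2}$.

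There is no real obstacle here; both claims are immediate corollaries once one observes that the regularisation theorems pin down both the dominance-maximal index with a nonzero decomposition multiplicity and the precise value of that multiplicity. The only thing to be slightly careful about is that $2^{e/2}$ is nonzero, so cancellation of the scalar $c$ is legitimate — this is automatic.
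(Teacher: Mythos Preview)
Your approach is essentially the paper's (which simply declares the result immediate from \cref{regn,spinregn}). One slip: you have the direction of dominance reversed. The regularisation theorems say $\dn{\spe\la}\mu=0$ unless $\mu\dom\la\reg$, so $\la\reg$ is the \emph{least} dominant partition indexing a nonzero decomposition number, not the maximum; likewise for $\dbl\al\reg$. With ``maximum'' replaced by ``minimum'' (or ``least dominant'') throughout, your argument is correct and matches the intended one.
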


Part \Cref{item:multiple} of this lemma establishes the constant of proportionality claimed in our main theorem \Cref{main}.

Next we examine the consequences of the Murnaghan--Nakayama--Morris rules for character values given in \cref{charvalsec}. First we compare $k$-weight and $k$-bar-weight.

\begin{lemma}
\label{sameweight}
Let \(\la\in\scrp\) and \(\al\in\scrd\), with \(\bspn{\al} \propto \bspe{\la}\). Then for any odd integer \(k\), the \(k\)-weight of \(\la\) equals the $k$-bar-weight of \(\al\).
\end{lemma}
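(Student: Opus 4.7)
The plan is to compare character values of $\spn\al$ and $\spe\la$ on conjugacy classes of cycle type $(k^w,1^{n-kw})$ for $k$ odd, and then invoke the characterizations of $k$-weight and $k$-bar-weight given immediately after \Cref{murnak} and \Cref{spinmurnak}. The point is that for odd $k$ the partition $(k^w,1^{n-kw})$ has only odd parts and so labels a $2$-regular conjugacy class of $\hsss n$; consequently $\bspn\al$ and $\bspe\la$, being proportional as Brauer characters, must take values on this class that differ by the same nonzero constant as on every other $2$-regular class.

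Concretely, the hypothesis $\bspn\al\propto\bspe\la$ provides a scalar $c$ (nonzero, since neither Brauer character is identically zero) with $\spn\al(\nu)=c\,\spe\la(\nu)$ for every partition $\nu$ of $n$ into odd parts. Specialising to $\nu=(k^w,1^{n-kw})$ gives
\[
\spn\al(k^w,1^{n-kw}) \;=\; c\,\spe\la(k^w,1^{n-kw})
\]
for every $w\geq 0$. Since $c\neq 0$, the maximum value of $w$ making the left-hand side nonzero coincides with that making the right-hand side nonzero.

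By the observation following \Cref{spinmurnak}, the former maximum is the $k$-bar-weight of $\al$; by the observation following \Cref{murnak}, the latter is the $k$-weight of $\la$. These two integers therefore agree, as required. Given the paper's earlier characterizations of the weights through character values on cycle type $(k^w,1^{n-kw})$, there is no substantive obstacle; the argument is essentially an immediate application of proportionality on a carefully chosen $2$-regular class.
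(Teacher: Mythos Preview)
Your proof is correct and follows essentially the same approach as the paper's own argument: both use proportionality on the $2$-regular class of cycle type $(k^w,1^{n-kw})$ together with the characterisations of $k$-weight and $k$-bar-weight via character values given after \cref{murnak,spinmurnak}. You are slightly more explicit about the nonvanishing of the proportionality constant, but the substance is identical.
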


\begin{proof}
The characters \(\spn{\al}\) and \(\spe{\la}\) vanish on precisely the same \(2\)-regular classes. In particular, the \(k\)-weight of \(\la\) and the $k$-bar-weight of \(\al\) are both equal to the largest $w$ such that \(\spn{\al}\) and \(\spe{\la}\) are non-zero on elements of cycle type \((k^w, 1^{n-kw})\).
\end{proof}

Observe that any strict partition has a unique largest bar of odd length (if all parts are odd, it is the largest part; if all parts are even, it is the largest part minus one node; otherwise, it is the largest odd part together with the largest even part).

\begin{propn}\label{hatprop}
Suppose $\al\in\scrd$ has at least one odd part.
Let \(\ulob\) be the unique largest bar of odd length in \(\al\), and let $k$ be its length.
If $\la\in\scrp$ with $\bspn\al\propto\bspe\la$, then $\la$ has a unique \(k\)-rim-hook \(\delta\), and $\bspn{\hatal}\propto\bspe{\la\hooksm\delta}$.
\end{propn}

\begin{pf}
It is easy to see that $\hatal$ has no $k$-bars, since the largest bar of odd length in $\hatal$ is at most the sum of the largest odd part and largest even part of \(\hatal\) (taking these to be zero if no such parts exist), which is strictly less than the sum of those for \(\al\).
So $\al$ has $k$-bar-weight $1$, and therefore \cref{sameweight} gives that $\la$ has $k$-weight $1$.
This implies in particular that $\la$ has a unique rim-hook of length $k$, which we denote \(\delta\).
Then $\rh\la k=\{\la\hooksm\delta\}$, while $\rb\al k=\{\hatal\}$.

Let $a$ be such that $\bspn\al=a\bspe\la$. Then \cref{murnak,spinmurnak} give
\[
\bspn{\hatal}(\nu)=\frac1{d_{\hatal}}\bspn\al(\nu\sqcup(k))=\frac a{d_{\hatal}}\bspe\la(\nu\sqcup(k))=\frac{ac_{\la\hooksm\delta}}{d_{\hatal}}\bspe{\la\hooksm\delta}(\nu)
\]
for any partition $\nu$ of $n-k$ into odd parts, and therefore
\[
\bspn{\hatal}=\frac{ac_{\la\hooksm\delta}}{d_{\hatal}}\bspe{\la\hooksm\delta}.\qedhere
\]
\end{pf}

Finally we derive some consequences of the modular branching rules.
Recall, given \(\la \in \scrp\), $\al\in\scrd$ and $\ep\in\{0,1\}$, that \(\la^{-\eps}\) denotes the partition obtained by removing all the removable \(\eps\)-nodes from \(\la\), and $\al^{-\ep}$ denotes the strict partition obtained by removing all the \esprms\ep from $\al$.

\begin{propn}\label{eiprop}
Let \(\la\in\scrp\) and \(\al\in\scrd\) with \(\bspn{\al} \propto \bspe{\la}\).
Let \(\ep \in \set{0,1}\).
Then \(\bspn{\al^{-\ep}} \propto \bspe{\la^{-\eps}}\).
\end{propn}

\begin{pf}
Let $r$ be the number of \esprms\ep of $\al$. \cref{spinbranch} shows that $r$ is maximal such that $e_\ep^{(r)}\bspn\al\neq0$, and so $r$ is also maximal such that $e_\ep^{(r)}\bspe\la\neq0$.
\cref{branching} then shows that $r$ is the number of removable $\ep$-nodes of $\la$, and that $e_\ep^{(r)}\bspe\la=\bspe{\la^{-\ep}}$. At the same time, \cref{spinbranch} shows that $e_\ep^{(r)}\bspn\al$ is a non-zero multiple of $\bspn{\al^{-\ep}}$. So
\[
\bspn{\al^{-\ep}} \propto e_\ep^{(r)}\bspn\al \propto e_\ep^{(r)}\bspe\la = \bspe{\la^{-\ep}}.\qedhere
\]
\end{pf}

\section{Proportional implies 4-stepped-and-semicongruent}
\label{sec:proportional_implies_fsas-only_if}

Our aim in this section is to prove the following statement.
We say that a strict partition \(\al\) is \emph{proportional} if there exists a partition \(\la\) such that \(\bspn{\al} \propto \bspe{\la}\); the definition of \fsas was given at the start of the Introduction.

\begin{thm}
\label{onlyif}
Suppose $\al\in\scrd$. If $\al$ is proportional, then $\al$ is \fsas.
\end{thm}

This is enough to prove the ``only if'' claim of \cref{main} when given also the ``if'' direction (\Cref{thm:proportionality}).
Indeed, if \(\bspn{\al}\) is proportional to \(\bspe{\la}\), then \Cref{onlyif} says that $\al$ is \fsas, and we must furthermore deduce that $\la\in\{\lx\al, \lx\al'\}$;
having that \(\al\) is \fsas, \Cref{thm:proportionality} will tell us that \(\bspn{\al}\) is proportional to \(\bspe{\lx\al}\); since the constants of proportionality are the same (by \Cref{lemma:first_consequences}\Cref{item:multiple}), we find that \(\bspe{\lx\al} = \bspe{\la}\).
Then a theorem of Wildon \cite[Theorem 1.1.1(ii)]{wildon2008distinctrows} gives $\la\in\{\lx\al, \lx\al'\}$.

The remainder of this section comprises the proof of \Cref{onlyif}. The proof is by induction on the size of \(\al\): let \(\al\) be a proportional strict partition; suppose that all smaller proportional strict partitions are \fsas; and our goal is to show that \(\al\) is \fsas.

We achieve our goal principally by applying the inductive hypothesis to three strict partitions: \(\al^{-0}\), \(\al^{-1}\) and \(\hatal\), where \(\ulob\) denotes the unique largest bar of odd length in \(\al\).
We also make use of requirements on $\dbl\al\reg$ (\Cref{lemma:first_consequences}\Cref{item:2-reg}) and the bar-weights of~\(\al\) (\Cref{sameweight}).

The strict partitions \(\al^{-0}\) and \(\al^{-1}\) are proportional by \Cref{eiprop}, and so whenever they are smaller than \(\al\) the inductive hypothesis applies. Meanwhile \(\hatal\) is proportional whenever \(\al\) contains an odd part by \Cref{hatprop} (whilst we quickly deal with the case of \(\al\) having all parts even in the first subsection below).

Throughout, let \(\la\) be a partition such that \(\bspn\al\) is proportional to \(\bspe\la\).
We summarise below the assumptions just made.

\smallskip
\needspace{10em}
\begin{framing}{black}
\noindent\textbf{Assumptions in force for the rest of \cref{sec:proportional_implies_fsas-only_if}:}
\begin{itemize}[beginthm]
\item
$\al\in\scrd$ is \prp;
\item
\(\ulob\) is the unique largest odd bar in \(\al\);
\item
$\la\in\scrp$ is such that \(\bspn\al\) is proportional to \(\bspe\la\);
\item
any proportional strict partition $\be$ with $|\be|<|\al|$ is \fsas.
\end{itemize}
\end{framing}

\subsection{Case of all parts having same parity}

First we consider the case where the parts of $\al$ all have the same parity.

\begin{lemma}
\label{lemma:one-part_partitions}
Suppose \(\al\) consists of a single odd part.
Then \(\al\) is \fsas (that is, \(\al \in \set{(1), (3)}\)).
\end{lemma}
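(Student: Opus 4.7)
The plan is to handle $m \in \set{1,3}$ directly and then derive a contradiction from the supposition $m \gs 5$. For $m \in \set{1, 3}$ the claim is automatic: the \fstepped condition places no constraint on parts of size at most $4$, and the \fsemic condition is vacuous for a partition with a single odd part.

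For $m \gs 5$, the first step is to pin down $\la$ completely. Since $\al = (m)$ has no even part, its unique largest odd bar $\ulob$ is the whole of $\al$, of length $m$, so $\hatal = \vn$. Applying \Cref{hatprop} forces $\la$ itself to be an $m$-rim-hook, i.e.\ a partition of hook shape $\la = (a, 1^{m-a})$ for some $1 \ls a \ls m$. At the same time \Cref{lemma:first_consequences}\Cref{item:2-reg} gives $\la\reg = \dbl{(m)}\reg = ((m+1)/2, (m-1)/2)$ (the double is already $2$-regular since its parts differ by $1$). A short comparison of ladder occupancies --- a hook $(a,1^{m-a})$ has $2$ nodes in precisely ladders $2,\dots,\min(a,m-a+1)$ and $1$ node in ladders $\min(a,m-a+1)+1,\dots,\max(a,m-a+1)$ --- with those of the two-row shape $((m+1)/2,(m-1)/2)$ forces $a = (m+1)/2$. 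Hence $\la = ((m+1)/2, 1^{(m-1)/2})$ is uniquely determined.

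The contradiction will then be a simple dimension count. Because $\al = (m)$ has no even parts we have $\spn{\al} = \lan m\ran$, and \Cref{lemma:first_consequences}\Cref{item:multiple} (with $e = 0$) gives $\bspn{\al} = \bspe{\la}$; evaluating at the identity yields $\dim\lan m\ran = \dim\spe{\la}$. Schur's degree formula computes the left-hand side as $2^{(m-1)/2}$ (the dimension of the basic spin representation), while the hook length formula computes the right-hand side as $\binom{m-1}{(m-1)/2}$. Setting $k = (m-1)/2 \gs 2$, the required equality $2^k = \binom{2k}{k}$ fails by the standard inequality $\binom{2k}{k} > 2^k$ for all $k \gs 2$, giving the desired contradiction. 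No substantial obstacle is anticipated: the two inputs (\Cref{hatprop} and the regularisation theorems) pin down $\la$ uniquely, after which the dimension comparison is routine.
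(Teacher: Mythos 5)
Your proof is correct, and the first half (pinning down $\la = ((m+1)/2, 1^{(m-1)/2})$ via the uniqueness of the $m$-rim-hook and the regularisation constraint) is essentially the same as the paper's, modulo citing \Cref{hatprop} where the paper directly cites \Cref{sameweight}. For the second half you depart from the paper: the paper invokes \Cref{sameweight} a second time with the $(2k-1)$-bar of $(2k+1)$ to conclude that $\la$ must have a $(2k-1)$-rim-hook, whereas the second-largest rim-hook of the hook $(k+1,1^k)$ has length only $k$, forcing $k\ls1$. You instead use a degree comparison: since $e=0$, \Cref{lemma:first_consequences}\Cref{item:multiple} gives actual equality of Brauer characters, hence (evaluating at the identity, a $2$-regular class) equality of degrees; Schur's degree formula gives $2^{(m-1)/2}$ on one side, the hook length formula gives $\binom{2k}{k}$ on the other, and these disagree for $k\gs2$. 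Both routes are valid. Your dimension count is perhaps slightly less in the spirit of the surrounding section (which is organised around bar-weights and branching rather than degrees), and it leans on Schur's degree formula, which the paper only mentions in passing rather than states; but it is an entirely sound alternative that a reader could readily check.
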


\begin{proof}
Write \(\al_1 = 2k+1\). Then \(\al\) has a \((2k+1)\)-bar, so by \cref{sameweight} \(\la\) has a \((2k+1)\)-rim-hook. But \(\la\) is a partition of \(2k+1\), so \(\la\) must itself be a rim-hook. Meanwhile \(\la\reg = \dbl{\al}\reg = (k+1,k)\) by \Cref{lemma:first_consequences}\Cref{item:2-reg}, so the only possibility is \(\la = (k+1,1^k)\).

If $k\gs1$ then \(\al\) has also a \((2k-1)\)-bar, so \(\la\) has a $(2k-1)$-rim-hook (by \cref{sameweight}). But the second-largest rim-hook of \(\la = (k+1,k)\) is of length \(k\), so we must have \(k \geq 2k-1\). This gives \(k \leq 1\), and so \(\al_1 = 2k+1 \leq 3\) as required.
\end{proof}

\begin{propn}
Suppose \(\al\) has all parts odd.
Then \(\al\) is \fsas (that is, it is a \fbc).
\end{propn}

\begin{proof}
The case \(\len{\al} = 1\) is \Cref{lemma:one-part_partitions}, so suppose \(\len{\al} > 1\). The unique largest odd bar \(\ulob\) is the largest part of \(\al\).
By \Cref{hatprop} and the inductive hypothesis, \(\hatal\) is \fsas, and since it has all parts odd \(\hatal\) is \fbc. We can write \(\al_1 = \al_2 + 2k\) for some positive integer \(k\); we are required to show \(k = 2\). Let \(\eps\) be the \spr of the node at the end of row $2$ of \(\al\) (and hence the \spr of the nodes at the end of every row except possibly row \(1\)).

Suppose towards a contradiction \(k \geq 3\).
Then \(\al^{-\eps} = (\al_2 + r, \al_2 -2, \al_3 - 2, \ldots)\) where \(r \in \set{2k, 2k-2}\). 
Then \(\al_2 + r > 4\) but \(\al_2 +r - 4 \not\in\al^{-\eps}\), so \(\al^{-\eps}\) is not \fstepped, a contradiction.

Now suppose towards a contradiction \(k=1\). Then the node at the end of row $1$ of \(\al\) is removable and has \spr $\bep$, giving \(\al^{-\beps} = (\al_2 +1, \al_2, \al_3, \ldots)\). If \(\al_2 > 3\), then \(\al^{-\beps}_1 > 4\) but \(\al^{-\beps} -4 \not\in \al^{-\beps}\), contradicting the assumption that \(\al^{-\beps}\) is \fstepped. If \(\al_2 \in \set{1,3}\), then \(\al \in \set{(3,1), (5,3)}\) and it is easily checked that these strict partitions are not proportional (for example by inspecting the character tables in \cite{jk} and \cite{hohum}, or by identifying differences in odd weights and removable nodes between \(\al\) and candidate partitions).
\end{proof}


\begin{propn}
Suppose \(\al\) has all parts even.
Then \(\al\) is \fsas.
\end{propn}

\begin{proof}
Trivially any strict partition with all parts even is \(4\)-semicongruent; it remains to verify the \fstepped property.

Let \(i\) be such that \(\al_i > 4\) (hence \(\al_i \geq 6\)).
Let \(\eps\) be the \spr of the node at the end of row \(i\) of \(\al\). Then \(\al_i^{-\eps} = \al_i - 1 > 4\) and \(\al^{-\eps}\) is \fsas by the inductive hypothesis. Thus \(\al_i - 5 \in \al^{-\eps}\). But since \(\al_i -5\) is odd, it cannot be a part of \(\al\), and likewise nor can \(\al_i -3\); thus a part of size \(\al_i - 5\) must have been obtained by removing a single \nd\ep from a part of size \(\al_i - 4 \in \al\).
\end{proof}

Given the above two propositions, we assume for the remainder of this section that \(\al\) has both odd and even parts.
In particular, the unique largest odd bar \(\pi\) comprises the largest even part and the largest odd part, and by \Cref{hatprop} the partition \(\la\) has a unique rim-hook \(\delta\) of length \(\abs{\pi}\), and \(\bspn\hatal\) is proportional to \(\bspe{\la\hooksm\delta}\).
Then by the inductive hypothesis $\hatal$ is \fsas.

\smallskip
\needspace{10em}
\begin{framing}{black}
\noindent\textbf{Additional assumption in force for the rest of \cref{sec:proportional_implies_fsas-only_if}:}
\begin{itemize}[beginthm]
\item
$\al$ has both odd and even parts;
\item 
\(\de\) is the unique rim-hook of length \(\abs{\pi}\) in \(\la\), satisfying \(\bspn\hatal \propto \bspe{\la\hooksm\delta}\).
\end{itemize}
\end{framing}

We deal with one additional configuration here.

\begin{lemma}
\label{lemma:1_only_odd_2_not_in_hat}
Suppose \(1\) is the only odd part of \(\al\) and that \(2 \not\in \hatal\).
Then \(\al\) is \fsas.
\end{lemma}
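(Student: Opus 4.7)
The plan is to apply the inductive hypothesis twice: once to \(\hatal\), to pin down the smaller even parts of \(\al\), and once to \(\al^{-0}\), to constrain the largest part. The main technical step---the residue analysis identifying the \esprms{0} of \(\al\)---is a direct but slightly delicate case-check, which needs care to rule out spurious removable blocks strictly in the interior of a row. Combined, the two applications force \(\al\) into a \fsas{} shape.

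For the first application, \Cref{hatprop} gives that \(\hatal\) is proportional and strictly smaller than \(\al\), so by the inductive hypothesis \(\hatal\) is \fsas. Since \(1\) is the unique odd part of \(\al\) and lies in \(\ulob\), all parts of \(\hatal\) are even, making \fsemic{} vacuous. Combining \fstepped{} with the hypothesis \(2 \notin \hatal\) forces every part of \(\hatal\) to be a multiple of \(4\): a part congruent to \(2 \pmod 4\) would, by repeated subtraction of \(4\), eventually yield \(2 \in \hatal\). Hence \(\hatal = (4m, 4m-4, \ldots, 4)\) for some \(m \geq 0\), and writing \(e = \al_1\) gives \(\al = (e, 4m, 4m-4, \ldots, 4, 1)\) with \(e\) even and \(e > 4m\) whenever \(m \geq 1\).

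For the second application, by \Cref{eiprop} and the inductive hypothesis \(\al^{-0}\) is \fsas. A residue check---the column residues in any row follow the pattern \(0,1,1,0,0,1,1,0,\ldots\), so a residue-\(0\) ending block in a row of length \(e\), \(4m\), \ldots, \(4\), or \(1\) has length at most one---identifies the \esprms{0} of \(\al\) as exactly the row-end nodes \((i, \al_i)\) for \(i = 2, \ldots, l\), together with \((1, e)\) when \(e \equiv 0 \pmod 4\).

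The argument then splits on \(e \bmod 4\). If \(e \equiv 0 \pmod 4\), then \(\al^{-0} = (e-1, 4m-1, 4m-5, \ldots, 3)\) has all parts \(\equiv 3 \pmod 4\), and the \fstepped{} condition on \(e-1\) (when \(e \geq 8\)) demands \(e-5 \in \al^{-0}\); the only candidate is \(4m-1\), forcing \(e = 4(m+1)\), and hence \(\al\) is \fsas{} (the base \(e = 4\) has \(m = 0\) and \(\al = (4, 1)\)). If instead \(e \equiv 2 \pmod 4\), then \(\al^{-0} = (e, 4m-1, 4m-5, \ldots, 3)\) has \(e\) as its only even part, so the \fstepped{} condition on \(e\) (for \(e \geq 6\)) cannot be satisfied; hence \(e = 2\), \(m = 0\), and \(\al = (2, 1)\), which is \fsas.
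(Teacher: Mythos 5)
Your proof is correct and takes essentially the same approach as the paper's. You apply the inductive hypothesis to $\hatal$ to conclude it is $(4m,4m-4,\dots,4)$, then apply it to $\al^{-0}$, compute $\al^{-0}$ by a spin-residue analysis identical to the paper's, and use the $4$-stepped condition on $\al^{-0}$ to pin down $\al_1$. The only difference is cosmetic: you organise the final step as a direct case split on $\al_1 \bmod 4$ rather than the paper's proof by contradiction, and you spell out the residue pattern $0,1,1,0,\dots$ a little more explicitly; the underlying reasoning is the same.
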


\begin{proof}
The hypotheses imply that \(\hatal\) does not have \(1\), \(2\) or \(3\) as a part.
Then since \(\hatal\) is \fsas, it is of the form \((4k, 4k-4, \dots, 4)\) for some nonnegative integer \(k\).
So \(\al = (\al_1, 4k, 4k-4, \ldots, 4, 1)\) for some even integer \(\al_1 > 4k\).
If \(\al_1 = 4k+4\), or if \(k=0\) and \(\al_1 = 4k+2\), then \(\al\) is \fsas, as required.

Suppose towards a contradiction either \(\al_1 \geq 4k+6\) or \(k > 0\) and \(\al_1 = 4k+2\).
In either case, \(\al\) has at least one \esprm0, and \(\al^{-0} = (\al^{-0}_1, 4k-1, 4k-5, \ldots, 3)\), where \(\al^{-0}_1\) is either equal to \(\al_1\) or to \(\al_1 - 1\); in either case \(\al^{-0}_1 > 4\) but \(\al^{-0}_1 - 4\) is not a part of \(\al^{-0}\).
Then $\al^{-0}$ is not \fstepped, a contradiction.
\end{proof}

\subsection{Initial bounds on differences between parts of \texorpdfstring{\(\al\)}{alpha}}

We begin the proof for general \(\al\) by showing that consecutive parts of $\al$ cannot be too far apart. Clearly the parts except the largest odd and even parts cannot differ from their neighbouring parts by more than \(4\), since \(\hatal\) is \fstepped.
But furthermore the largest odd and even parts cannot be too much larger than the others, or else we could remove nodes from these parts and find \(\al^{-0}\) or \(\al^{-1}\) to have significantly differing parts, and hence not be \fstepped.
This is made precise by the following lemma.

\begin{lemma}
\label{lemma:alpha1_vs_alpha2_bounds}
Let \(1 \leq i \leq \len{\al} \).
\begin{enumerate}[(i)]
    \item
    \(\al_i - \al_{i+1} \leq 5\).
    \item
    \(\al_i - \al_{i+1} \leq 4\) unless \(\al_1 \not\equiv \al_2 \ppmod{2}\) and either \(i=1\), or \(i=2\), \(\al_2\) is odd and \(\al_1 - \al_2 = 3\).
\end{enumerate}
\end{lemma}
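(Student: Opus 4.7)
The approach is a case analysis on whether $\al_i$ and $\al_{i+1}$ belong to the largest odd bar $\ulob$; since $\al$ has both odd and even parts, $\ulob$ comprises the largest odd part and the largest even part of $\al$. Let $i^*$ denote the smallest index with $\al_{i^*} \not\equiv \al_1 \ppmod 2$, so that $\ulob = (\al_1, \al_{i^*})$. By \Cref{hatprop} and the inductive hypothesis, $\hatal$ is \fsas, so consecutive parts of $\hatal$ differ by at most $4$. When $\al_i$ and $\al_{i+1}$ both lie in $\hatal$ they are consecutive in $\hatal$, giving $\al_i - \al_{i+1} \leq 4$; when exactly one lies in $\ulob$, comparing with the two $\hatal$-parts that bracket the removed $\ulob$-part and using strict monotonicity refines this to $\al_i - \al_{i+1} < 4$. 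These cases dispatch every $i$ except $i = 1$ (where $\al_1 \in \ulob$) and, when $i^* = 2$, also $i = 2$ (where $\al_2 \in \ulob$ and there is no larger $\hatal$-part to compare).

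For the remaining cases I would invoke the inductive hypothesis on $\al^{-0}$ and $\al^{-1}$, which are proportional by \Cref{eiprop} and hence \fsas (provided they are proper subpartitions of $\al$, which will hold in the relevant configurations). The key combinatorial input is that the residue pattern $(0, 1, 1, 0)$ repeating across the columns controls the shrinkage of each row in $\al^{-\eps}$: a row whose last node has residue $\eps$ shrinks by $1$ when its length is even and by $2$ when its length is odd, whereas a row whose last node has residue $\bar\eps$ does not shrink. Writing $a, b$ for the respective shrinkages of rows~$1$ and~$2$ in $\al^{-\eps}$ and applying the 4-stepped property of $\al^{-\eps}$ to its first two parts yields
\[
\al_1 - \al_2 \leq 4 + a - b.
\]
Computing $a, b$ in each parity combination of $\al_1, \al_2$ and choosing $\eps$ to obtain the tightest bound, one concludes $\al_1 - \al_2 \leq 5$ in general and $\al_1 - \al_2 \leq 4$ whenever $\al_1 \equiv \al_2 \ppmod 2$. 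The analogous analysis applied to rows $2$ and $3$ in the $i^* = 2$ setting gives the bound on $\al_2 - \al_3$; the further restriction in (ii) at $i = 2$ reflects that $\al_2 - \al_3 = 5$ is attainable only in the one parity configuration identified in the statement.

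The main obstacle is the subcase analysis of the shrinkages $a, b$, which depends on the parities of $\al_1, \al_2$ (and $\al_3$ for the row-$2$--row-$3$ bound); there are a handful of subcases, each requiring computation of the shrinkages and a strategic choice of $\eps \in \{0, 1\}$. A minor wrinkle is verifying that the chosen $\al^{-\eps}$ is a proper subpartition of $\al$ so that the inductive hypothesis applies, but this is routine given that $\al$ contains parts of both parities.
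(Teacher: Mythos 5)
Your reduction to $i\in\{1,2\}$ via the bracketing argument with $\hatal$ is sound (it is essentially how the paper opens part~(ii)), and the high-level strategy of leaning on the 4-stepped property of $\al^{-0}$, $\al^{-1}$, and $\hatal$ is indeed the paper's. But two genuine gaps remain in the cases $i\in\{1,2\}$ that the bracketing does not cover. The shrinkage recipe is not exact: a row whose end node has spin-residue $\eps$ and odd length does not necessarily contribute $b=2$, because the end nodes are spin-removable only when removing them leaves a strict partition, which can fail --- for instance if $\al_2-1$ and $\al_2-2$ are both parts of $\al$, then $(2,\al_2)$ is not spin-removable at all. Since the inequality reads $\al_1-\al_2\leq 4+a-b$, \emph{overestimating} $b$ (the shrinkage of row $2$) yields a bound that is smaller than what the argument actually supports.

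More seriously, the outline never addresses the possibility $\al^{-\eps}=\al$. When the chosen $\eps$ has no spin-removable nodes in $\al$, the inductive hypothesis gives nothing (since $\lvert\al^{-\eps}\rvert=\lvert\al\rvert$), and the inequality $\al^{-\eps}_1-\al^{-\eps}_2\leq 4$ is simply unavailable --- and this is exactly the regime that can arise when parts of $\al$ are close together. The paper closes this gap by invoking the 4-\emph{semicongruence} of $\hatal$: when $\al^{-\beps}=\al$ forces $\al_{i+1}\geq 3$ and $\al_{i+1}-1,\al_{i+1}-2\in\al$, one finds that $\al_{i+1}$ and $\al_{i+1}-2$ are both odd parts of $\hatal$ differing by $2$, contradicting 4-semicongruence. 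Your plan does not anticipate the need for 4-semicongruence of $\hatal$ at all, and some step of this kind is required before the case analysis can be completed.
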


\begin{proof}
Let \(\eps\) be the \spr of the node $(i,\al_i)$ (the node at the end of row \(i\)).
\begin{enumerate}[(i)]
\item 
Suppose towards a contradiction that \(\al_i - \al_{i+1} > 5\).

First consider the case where \(\al_i\) is even.
Then the \nd\ep at the end of row $i$ of $\al$ is \spre, and thus \(\al^{-\epsilon}_i = \al_i - 1\). But then \(\al^{-\epsilon}_i - 4 = \al_i - 5 > \al_{i+1} \geq \al^{-\epsilon}_{i+1}\), contradicting the assumption that \(\al^{-\epsilon}\) is \fstepped.

Now consider the case where \(\al_i\) is odd. If \(\al_i - \al_{i+1} > 6\), then the two \nd\ep{}s at the end of row $i$ of $\al$ are \spre, and thus \(\al^{-\epsilon}_i = \al_i -2\).
But then \(\al^{-\epsilon}_i - 4 = \al_i - 6 > \al_{i+1} \geq \al^{-\epsilon}_{i+1}\), contradicting the assumption that \(\al^{-\epsilon}\) is \fstepped.

If \(\al_i - \al_{i+1} = 6\), then the node at the end of row \(i+1\) has residue \(\beps\) distinct from that of row \(i\), and \(\al_i^{-\beps} - \al_{i+1}^{-\beps} \geq \al_i - \al_{i+1} > 4\). Thus \(\al^{-\beps}\) is not \fstepped, so to avoid a contradiction with the inductive hypothesis we have \(\al^{-\beps} = \al\) (that is, \(\al\) has no \esprms\beps). In order for row \(i+1\) to have no \esprms\beps, we must have \(\al_{i+1} \geq 3\) and \(\al_{i+1} - 1, \al_{i+1}-2 \in \al\). But then \(\al_{i+1}, \al_{i+1} -2 \in \hatal\) (with \(\al_{i+1}\) odd), contradicting the assumption that \(\hatal\) is \fsemic.

\item
Since \(\hatal\) is \fstepped, we have \(\al_i - \al_{i+1} \leq  4\) for all \(i\) except possibly when \(\al_i\) is the largest part of its parity.
If \(\al_i\) is largest of its parity but \(i \geq 3\), then \(\al_{i-1}\) is not largest of its parity, 
and so \(\al_{i-1} \in \hatal\) but \(\al_i \not\in \hatal\).
Then \(\al_{i-1} - \al_{i+1} \leq  4\) and hence \(\al_i - \al_{i+1} \leq 4\).
So we may assume \(i \in \set{1,2}\).

If \(\al_1 \equiv \al_2 \ppmod{2}\), then by part (i) we have \(\al_1 - \al_2 \leq 4\) (resolving the case \(i=1\)), while \(\al_2\) is not the largest part of its parity (resolving the case \(i=2\)).
So we may assume \(\al_1 \not\equiv \al_2 \ppmod{2}\).

Consider the case \(i=2\), \(\al_1\) odd and \(\al_2\) even.
We have \(\al_2 - \al_3 \leq 5\) by part (i).
Suppose towards a contradiction that \(\al_3 = \al_2 - 5\).
We argue as in the final paragraph of part (i): the node at the end of row \(3\) has residue \(\beps\), and thus \(\al_2^{-\beps} - \al_{3}^{-\beps} \geq \al_2 - \al_{3} > 4\);
therefore \(\al^{-\beps}\) is not \fstepped, and so \(\al^{-\beps} = \al\); in order for row \(3\) to have no \esprms\beps, we have \(\al_{3} \geq 3\) and \(\al_{3} - 1, \al_{3}-2 \in \al\); but then \(\al_3, \al_3-2 \in \hatal\), contradicting the assumption that \(\hatal\) is \fsemic.
So \(\al_2 - \al_3 \leq 4\) in this case.

The only remaining case to consider is when \(i=2\), \(\al_1\) even and \(\al_2\) odd, where we must show that if \(\al_2 - \al_3 = 5\) then \(\al_1 - \al_2 = 3\).
Indeed, if \(\al_2 - \al_3 = 5\) then the nodes at the ends of rows \(2\) and \(3\) both have \spr \(\eps\), and so \(\al_2^{-\beps} - \al_{3}^{-\beps} = \al_2 - \al_{3} > 4\).
Thus \(\al^{-\beps}\) is not \fstepped, so to avoid a contradiction with the inductive hypothesis we have \(\al^{-\beps} = \al\) (that is, \(\al\) has no \esprms\beps).
In particular, we cannot have \(\al_1 = \al_2 + 5\) (for then row \(1\) would have a \esprm\beps).
So \(\al_1 \in \set{\al_2 + 1, \al_2 + 3}\).
But if \(\al_1 = \al_2 + 1\), then \(\al^{-\eps}_1 = \al_1 = \al_2 +1 > 4\), but \(\al^{-\eps}_2 = \al_2 - 2\) and \(\al^{-\eps}_3 \leq \al_3 = \al_2 -5\), contradicting the assumption that \(\al^{-\eps}\) is \fstepped.
So \(\al_1 - \al_2 = 3\) as required.
\qedhere
\end{enumerate}
\end{proof}

\subsection{Improving bounds by considering the last fully-occupied ladder}

We now tighten the bounds from the previous section by considering restrictions on the \emph{fully-occupied ladders} in \(\dbl\al\) imposed by the regularisation requirement (\Cref{lemma:first_consequences}\Cref{item:2-reg}).

Recall from \Cref{subsubsec:residues_and_ladders} that $\lad l$ denotes the $l$th ladder in $\bbn^2$.
For a partition \(\mu\), we say a ladder \(\lad{l}\) is \emph{fully-occupied} in  \(\mu\) if \(\lad{l} \subseteq \mu\).
For a strict partition \(\be\), let \(\flad{\be}\) be maximal such that \(\lad{\flad{\be}}\) is fully-occupied in \(\dbl{\be}\), and set \(\flad\vn=0\).
Trivially we find \(\flad{\be} \leq \dbl{\be}_1 = \ceil{\be_1/2}\).
We will show that for \(\al\) and \(\hatal\), this bound is in fact attained.

\begin{lemma}
\label{lemma:first_ladder_still_hit_going_down}
Suppose \(\be\in\scrd\), and let \(1 \leq i \leq \len{\be}-1\).
\begin{enumerate}[(i)]
    \item
    \label{item:final_node_ladder_still_hit_going_down}
Suppose either \(\beta_i\) is odd and \(\beta_i - \beta_{i+1} \leq 4\), or \(\beta_i\) is even and \(\beta_i - \beta_{i+1} \leq 5\).
Then the ladder that contains the node \((2i-1, \ceil{\beta_i/2})\) (that is, the final node in row $2i-1$ of $\dbl\be$) intersects \(\dbl\beta\) also in rows \(2i\), \(2i+1\) and \(2i+2\) (ignoring the last if \(\beta_{i+1} = 1\)).
    \item
    \label{item:penultimate_node_ladder_still_hit_going_down}
Suppose \(\beta_i\) is odd and \(\be_i - \be_{i+1} \leq 6\).
Then the ladder that contains the node \((2i-1, \floor{\beta_i/2})\) (that is, the penultimate node in row $2i-1$ of $\dbl\be$) intersects \(\dbl\beta\) also in rows \(2i\), \(2i+1\) and \(2i+2\) (ignoring the last if \(\beta_{i+1} = 1\)).
\end{enumerate}
\end{lemma}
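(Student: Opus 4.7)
The plan is to verify the claim by a direct calculation comparing, for each row $r \in \{2i, 2i+1, 2i+2\}$, the column at which the specified ladder meets row $r$ with the length of that row in $\dbl\beta$. The two ingredients are: (a) row $2j-1$ of $\dbl\beta$ has length $\lceil\beta_j/2\rceil$ and row $2j$ has length $\lfloor\beta_j/2\rfloor$; and (b) the ladder $\lad l$ meets row $r$ at column $l - r + 1$, which is a genuine node of $\dbl\beta$ iff this quantity is positive and at most the length of row $r$.

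For part~\ref{item:final_node_ladder_still_hit_going_down}, the ladder is $\lad{l_1}$ with $l_1 = 2i - 2 + \lceil\beta_i/2\rceil$, so its intersections with rows $2i, 2i+1, 2i+2$ sit in columns $\lceil\beta_i/2\rceil - 1$, $\lceil\beta_i/2\rceil - 2$ and $\lceil\beta_i/2\rceil - 3$ respectively. The inequality $\lfloor\beta_i/2\rfloor \ge \lceil\beta_i/2\rceil - 1$ is automatic, handling row~$2i$. For the remaining two rows I would check the inequalities $\lceil\beta_{i+1}/2\rceil \ge \lceil\beta_i/2\rceil - 2$ and $\lfloor\beta_{i+1}/2\rfloor \ge \lceil\beta_i/2\rceil - 3$ by a short case analysis on the parities of $\beta_i$ and $\beta_{i+1}$, using the hypotheses $\beta_i - \beta_{i+1} \le 4$ (when $\beta_i$ is odd) or $\le 5$ (when $\beta_i$ is even). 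The excluded case $\beta_{i+1} = 1$, and the situations where the computed column drops below $1$, are handled vacuously.

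For part~\ref{item:penultimate_node_ladder_still_hit_going_down}, the ladder is $\lad{l_2}$ with $l_2 = l_1 - 1$ (using that $\beta_i$ is odd). The relevant columns at rows $2i, 2i+1, 2i+2$ each drop by $1$, so the verifications run in parallel to part~\ref{item:final_node_ladder_still_hit_going_down} and are a priori easier, except that we now only have $\beta_i - \beta_{i+1} \le 6$. The resolving observation is that, since $\beta_i$ is odd, the difference $\beta_i - \beta_{i+1}$ is odd exactly when $\beta_{i+1}$ is even, in which case the hypothesis already forces $\beta_i - \beta_{i+1} \le 5$; this is exactly what is needed to control $\lfloor\beta_{i+1}/2\rfloor$ from below.

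The main obstacle is nothing more than patient parity bookkeeping; once the row-length and ladder-column formulas are written down, each inequality reduces mechanically to applying the hypothesis on $\beta_i - \beta_{i+1}$.
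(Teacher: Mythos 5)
Your approach matches the paper's proof exactly: identify the ladder as $\lad{\lceil\beta_i/2\rceil+2i-2}$ (resp.\ $\lad{\lfloor\beta_i/2\rfloor+2i-2}$), note that $\dbl\beta$ meets it in row $j$ iff $\dbl\beta_j$ is at least the appropriate column value, and verify the three inequalities using $\dbl\beta_{2j-1}=\lceil\beta_j/2\rceil$, $\dbl\beta_{2j}=\lfloor\beta_j/2\rfloor$ and the hypothesis on $\beta_i-\beta_{i+1}$. One small remark: the parity observation you invoke in part~(ii) (that $\beta_{i+1}$ even forces $\beta_i-\beta_{i+1}\le 5$) is not actually needed, because with $\beta_i=2m+1$ the bound $2m+1-\beta_{i+1}\le 6$ already forces $\lfloor\beta_{i+1}/2\rfloor\ge m-2>m-3$ by integrality regardless of the parity of $\beta_{i+1}$; the hypothesis $\le 6$ suffices directly for all three inequalities.
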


\begin{proof}
\begin{enumerate}[(i),beginthm]
    \item 
The ladder that contains \((2i-1, \ceil{\beta_i/2})\) is \(\lad{\ceil{\beta_i/2}+2i-2}\), so \(\dbl{\be}\) meets this ladder in row \(j > 2i-1\) if and only if \(\dbl{\beta}_j \geq \ceil{\beta_i/2} +2i-1 -j\).
Clearly
\begin{align*}
\dbl{\beta}_{2i} &= \floor{\beta_i/2} \geq \ceil{\beta_i/2} - 1, \\
\intertext{while as illustrated in \Cref{subfig:final_node_even_ladders,subfig:final_node_odd_ladders} the assumptions imply that}
\dbl{\beta}_{2i+1} &= \ceil{\beta_{i+1}/2} \geq \ceil{\beta_i/2} - 2, \\
\dbl{\beta}_{2i+2} &= \floor{\beta_{i+1}/2} \geq \ceil{\beta_i/2} - 3.
\end{align*}

    \item 
The ladder that contains \((2i-1, \floor{\beta_i/2})\) is \(\lad{\floor{\beta_i/2}+2i-2}\), so \(\dbl{\be}\) meets this ladder in row \(j > 2i-1\) if and only if \(\dbl{\beta}_j \geq \floor{\beta_i/2} +2i-1 -j\).
Clearly
\begin{align*}
\dbl{\beta}_{2i} &= \floor{\beta_i/2} \geq \floor{\beta_i/2} - 1, \\
\intertext{while as illustrated in \Cref{subfig:penultimate_node_odd_ladders} the assumptions imply that}
\dbl{\beta}_{2i+1} &= \ceil{\beta_{i+1}/2} \geq \floor{\beta_i/2} - 2, \\
\dbl{\beta}_{2i+2} &= \floor{\beta_{i+1}/2} \geq \floor{\beta_i/2} - 3. \qedhere
\end{align*}
\end{enumerate}
\end{proof}

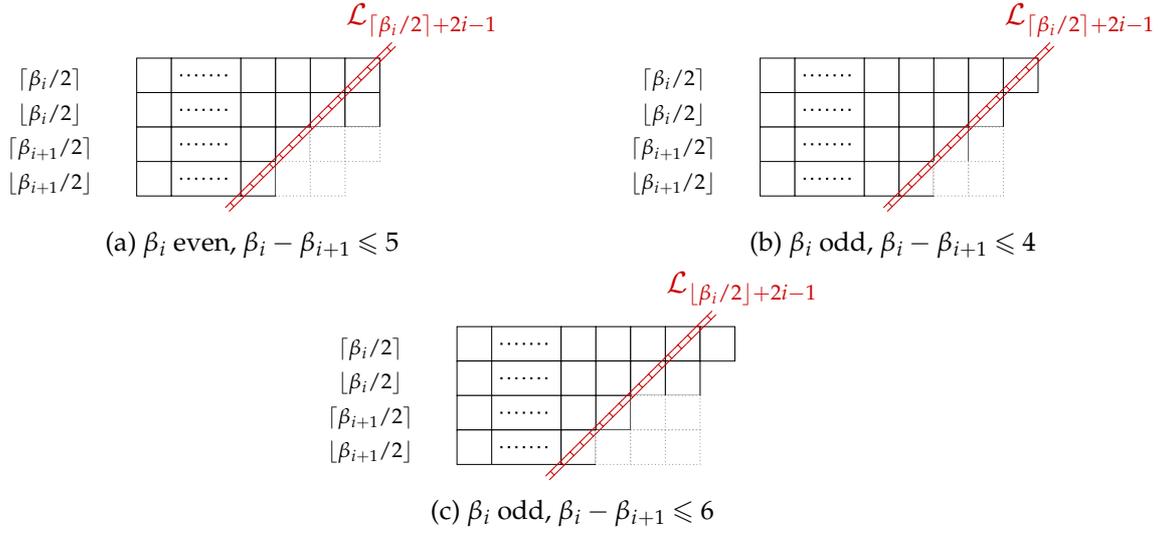
\begin{figure}
\centering
\begin{subfigure}[t]{0.49\textwidth}
\centering
\begin{tikzpicture}[x=13pt,y=13pt]
\tgyoung(0pt,0pt,%
    :<\scriptstyle\ceil{\be_i/2}>
    ::
    ;_2\hdts;;;;,
    :<\scriptstyle\floor{\be_i/2}>
    ::
    ;_2\hdts;;;;,
    :<\scriptstyle\ceil{\be_{i+1}/2}>
    ::
    ;_2\hdts;;!\Ypossible;;,!\Ydefault
    :<\scriptstyle\floor{\be_{i+1}/2}>
    ::
    ;_2\hdts;!\Ypossible;;
)
\draw[ladder auto, \colourC]
      (10.4,1.4) to (5.6,-3.4) node[color=\colourC,shift={(0.8,0.7)}]{\(\lad{\ceil{\be_i/2}+2i-1}\)};
\end{tikzpicture}
    \caption{\(\beta_i\) even, \(\beta_i - \beta_{i+1} \leq 5\)}
    \label{subfig:final_node_even_ladders}
\end{subfigure}
\begin{subfigure}[t]{0.49\textwidth}
\centering
\begin{tikzpicture}[x=13pt,y=13pt]
\tgyoung(0pt,0pt,%
    :<\scriptstyle\ceil{\be_i/2}>
    ::
    ;_2\hdts;;;;;,
    :<\scriptstyle\floor{\be_i/2}>
    ::
    ;_2\hdts;;;;,
    :<\scriptstyle\ceil{\be_{i+1}/2}>
    ::
    ;_2\hdts;;;!\Ypossible;,!\Ydefault
    :<\scriptstyle\floor{\be_{i+1}/2}>
    ::
    ;_2\hdts;;!\Ypossible;;
)
\draw[ladder auto, \colourC]
     (11.4,1.4) to (6.6,-3.4) node[color=\colourC, shift={(0.8,0.7)}]{\(\lad{\ceil{\be_i/2}+2i-1}\)};
\end{tikzpicture}
    \caption{\(\beta_i\) odd, \(\beta_i - \beta_{i+1} \leq 4\)}
    \label{subfig:final_node_odd_ladders}
\end{subfigure}
\begin{subfigure}[t]{0.5\textwidth}
\centering
\begin{tikzpicture}[x=13pt,y=13pt]
\tgyoung(0pt,0pt,%
    :<\scriptstyle\ceil{\be_i/2}>
    ::
    ;_2\hdts;;;;;,
    :<\scriptstyle\floor{\be_i/2}>
    ::
    ;_2\hdts;;;;,
    :<\scriptstyle\ceil{\be_{i+1}/2}>
    ::
    ;_2\hdts;;!\Ypossible;;,!\Ydefault
    :<\scriptstyle\floor{\be_{i+1}/2}>
    ::
    ;_2\hdts;!\Ypossible;;;
)
\draw[ladder auto, \colourC]
     (10.4,1.4) to (5.6,-3.4) node[color=\colourC, shift={(0.8,0.7)}]{\(\lad{\floor{\be_i/2}+2i-1}\)}; 
\end{tikzpicture}
    \caption{\(\beta_i\) odd, \(\beta_i - \beta_{i+1} \leq 6\)}
    \label{subfig:penultimate_node_odd_ladders}
\end{subfigure}
    \caption{An illustration of the ladder considered in \Cref{lemma:first_ladder_still_hit_going_down} hitting the following three rows. The dashed boxes indicate nodes that may or may not be present in the Young diagram.}
    \label{fig:full_ladders}
\end{figure}

The next \lcnamecref{lemma:flad_values} gives some basic results about $\flad\al$ and $\flad{\hatal}$.

\begin{lemma}
\label{lemma:flad_values}
\begin{enumerate}[(i), beginthm]
    \item
    \label{item:flad_for_X}
    \(\flad{\hatal} = \dbl{\hatal}_1 = \ceil*{\frac12 (\hatal)_1}\).
    \item
    \label{item:flad_for_alpha}
    \(\flad{\al} \in \set{ \dbl{\al}_1,\, \dbl{\al}_1 - 1}\).
    \item 
    \label{item:flad_for_alpha_alpha1_even}
    If \(\al_1\) is even, then \(\flad{\al} = \dbl{\al}_1 = \frac12 \al_1\).
\end{enumerate}
\end{lemma}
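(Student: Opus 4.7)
The plan is to combine \Cref{lemma:first_ladder_still_hit_going_down}, the inductive hypothesis that $\hatal$ is \fsas, and the bounds on consecutive parts from \Cref{lemma:alpha1_vs_alpha2_bounds}, in each case verifying the ladder-occupation condition $\dbl{\be}_r \geq \dbl{\be}_1 + 1 - r$ row by row in $\dbl{\be}$.

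For \Cref{item:flad_for_X}, I would first establish the auxiliary claim that for any \fsas strict partition $\be$, $\flad{\be} = \dbl{\be}_1$. The \fstepped property gives $\be_{i+1} \geq \be_i - 4$ whenever $\be_i \geq 5$, since then $\be_i - 4$ is a smaller part of $\be$; for $\be_i \leq 4$ (including $i = \len{\be}$ with the convention $\be_{\len\be + 1} = 0$) the relevant inequality is routine since the smallest part of a nonempty \fsas partition is at most $4$. In every case this yields $\ceil{\be_i/2} - \ceil{\be_{i+1}/2} \leq 2$ and $\floor{\be_i/2} - \floor{\be_{i+1}/2} \leq 2$. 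Telescoping from $i = 1$ gives $\ceil{\be_1/2} - \ceil{\be_i/2} \leq 2(i-1)$ (and similarly for floors), which rearranges to exactly $\dbl{\be}_r \geq \dbl{\be}_1 + 1 - r$ for every $r \leq \dbl{\be}_1$. This shows $\lad{\dbl{\be}_1} \subseteq \dbl{\be}$, and combined with the trivial upper bound $\flad{\be} \leq \dbl{\be}_1$ it gives the equality. Applying this to $\be = \hatal$, which is \fsas by the inductive hypothesis (using \Cref{hatprop} to conclude that $\hatal$ is proportional and noting $|\hatal| < |\al|$), yields \Cref{item:flad_for_X}.

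For \Cref{item:flad_for_alpha_alpha1_even}, the assumption that $\al_1$ is even gives $\dbl{\al}_1 = \dbl{\al}_2 = \al_1/2$, providing a two-row buffer at the top. I would verify $\dbl{\al}_r \geq \al_1/2 + 1 - r$ row by row, splitting into subcases by the parity of $\al_2$: if $\al_2$ is odd then it must be the largest odd part of $\al$, so $\al_3, \al_4, \ldots$ are parts of $\hatal$; if $\al_2$ is even then it must equal $\hatal_1$. In either subcase, the first few rows are handled directly using \Cref{lemma:alpha1_vs_alpha2_bounds}---the buffer absorbs the exceptional jump of $5$ permitted by part (i)---while the deeper rows, governed by parts of $\hatal$ (possibly with one ``interleaved'' copy of the largest odd part of $\al$), inherit the inequality from the telescoping estimate derived in the proof of \Cref{item:flad_for_X}. \Cref{item:flad_for_alpha} then follows: the case $\al_1$ even reduces to \Cref{item:flad_for_alpha_alpha1_even}, while the case $\al_1$ odd is treated analogously but yields only that $\lad{\dbl{\al}_1 - 1}$ is fully-occupied, because the $i = 1$ exception of \Cref{lemma:alpha1_vs_alpha2_bounds}(ii)---namely $\al_1$ odd, $\al_2$ even, $\al_1 - \al_2 = 5$---forces $\ceil{\al_1/2} - \ceil{\al_2/2} = 3$, costing exactly one step of the topmost ladder; meanwhile $\lad{\dbl{\al}_1 - 1}$ has enough slack in its top rows to survive this drop.

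The main obstacle will be the bookkeeping in \Cref{item:flad_for_alpha_alpha1_even,item:flad_for_alpha}: although $\hatal$ has a clean \fsas structure, the largest odd and largest even parts of $\al$ can be interleaved among the parts of $\hatal$ in several configurations, so translating bounds on $\hatal$ into bounds on $\dbl{\al}$ requires careful case tracking. Fortunately, only the top few rows of $\dbl{\al}$ are affected by this interleaving, and the lower-row inequalities follow uniformly from the telescoping estimate on $\hatal$ established in the proof of \Cref{item:flad_for_X}.
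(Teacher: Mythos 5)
Your proof is correct and follows the same strategy as the paper, modulo one avoidable complication. The telescoping estimate in your part~(i) is precisely the content of \Cref{lemma:first_ladder_still_hit_going_down}(i), which the paper invokes directly rather than re-deriving, and your ``buffer absorbs the jump of 5'' argument for parts~(ii)--(iii) mirrors the paper's handling of the one exceptional index permitted by \Cref{lemma:alpha1_vs_alpha2_bounds} via \Cref{lemma:first_ladder_still_hit_going_down}(ii). The bookkeeping you flag as the main obstacle --- tracking how the largest odd and even parts of $\al$ interleave with the parts of $\hatal$ so as to transfer telescoping bounds from $\dbl{\hatal}$ to $\dbl{\al}$ --- is in fact unnecessary: \Cref{lemma:alpha1_vs_alpha2_bounds} already bounds consecutive parts of $\al$ directly (with a failure at no more than one index $i\in\{1,2\}$), so one can run the telescoping (equivalently, \Cref{lemma:first_ladder_still_hit_going_down}) on $\dbl{\al}$ itself for every row, needing $\hatal$ only to justify those bounds via the inductive hypothesis.
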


\begin{proof}
The strict partition \(\hatal\) is \fstepped and hence satisfies the conditions of \Cref{lemma:first_ladder_still_hit_going_down}\Cref{item:final_node_ladder_still_hit_going_down} for every \(i\).
Thus the ladder \(\lad{\dbl{\hatal}_1}\) intersects \(\dbl{\hatal}\) in every row, and so \(\flad{\hatal} = \dbl{\hatal}_1\) as required.

By \Cref{lemma:alpha1_vs_alpha2_bounds}, the strict partition \(\al\) also satisfies the conditions of \Cref{lemma:first_ladder_still_hit_going_down}\Cref{item:final_node_ladder_still_hit_going_down}, except possibly for a single value of \(i \in \set{1,2}\) with \(\al_i\) odd, \(\al_1 \not\equiv \al_2 \ppmod{2}\), and \(\al_1 - \al_2 = 3\) if \(i =2\).

If the condition fails for \(i=1\), then \(\al_1\) is odd and \(\al_1 - \al_2 = 5\), and so by \Cref{lemma:first_ladder_still_hit_going_down}\Cref{item:penultimate_node_ladder_still_hit_going_down} the ladder \(\lad{\dbl{\al}_1-1}\) intersects rows \(2\), \(3\) and \(4\) of \(\dbl{\al}\), and hence all rows of \(\dbl{\al}\) by \Cref{lemma:first_ladder_still_hit_going_down}\Cref{item:final_node_ladder_still_hit_going_down}.

If the condition fails for \(i=2\), then \(\al_1\) is even and \(\al_1 - \al_2 = 3\).
So by \Cref{lemma:first_ladder_still_hit_going_down}\Cref{item:final_node_ladder_still_hit_going_down}, the ladder \(\lad{\dbl{\al}}\) intersects rows \(2\), \(3\) and \(4\) of \(\dbl{\al}\), and moreover since \(\al_1 - \al_2 = 3\) it is easily seen that it intersects nodes in these rows strictly before the last.
Then by \Cref{lemma:first_ladder_still_hit_going_down}\Cref{item:penultimate_node_ladder_still_hit_going_down}, this ladder intersects also rows \(5\) and \(6\) of \(\dbl{\al}\), and hence all rows of \(\dbl{\al}\) by \Cref{lemma:first_ladder_still_hit_going_down}\Cref{item:final_node_ladder_still_hit_going_down}.

Thus in either case \(\flad{\al} \geq \dbl{\al}_1 - 1\), and if \(\al_1\) is even then \(\flad{\al} = \dbl{\al}_1\).
\end{proof}

The following is the key observation that, combined with the values of \(\flad{\al}\) and \(\flad{\hatal}\) identified above, allows us to improve the bounds on differences between parts of \(\al\).

\begin{lemma}
\label{lemma:flad_alpha_vs_alpha_hat}
\(\flad{\al} \leq \flad{\hatal} + 2\).
\end{lemma}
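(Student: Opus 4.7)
The approach is a case analysis on the parities of \(\al_1\) and \(\al_2\). Since the largest odd bar \(\pi\) combines the largest even and largest odd parts of \(\al\), we have \((\hatal)_1 = \al_2\) when \(\al_1 \equiv \al_2 \pmod 2\) (the bar then removes \(\al_1\) together with some later part of opposite parity), and \((\hatal)_1 = \al_3\) when \(\al_1 \not\equiv \al_2 \pmod 2\) (with the convention \((\hatal)_1 = 0\) if \(\hatal = \vn\)). By \Cref{lemma:flad_values}\Cref{item:flad_for_X}, \(\flad{\hatal} = \ceil*{(\hatal)_1/2}\), while the remaining parts of \Cref{lemma:flad_values} show that \(\flad{\al} = \ceil*{\al_1/2}\) except when \(\al_1\) is odd and \(\al_1 - \al_2 = 5\), in which case it equals \(\ceil*{\al_1/2} - 1\).

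In the same-parity case, part (ii) of \Cref{lemma:alpha1_vs_alpha2_bounds} gives \(\al_1 - \al_2 \leq 4\), so
\[
\flad{\al} \;\leq\; \ceil*{\al_1/2} \;\leq\; \ceil*{\al_2/2} + 2 \;=\; \flad{\hatal} + 2
\]
immediately.

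In the different-parity case, \Cref{lemma:alpha1_vs_alpha2_bounds} alone allows \(\al_1 - \al_3\) to be as large as \(9\) (either when \(\al_1 - \al_2 = 5\) and \(\al_2 - \al_3 = 4\), or when \(\al_1 - \al_2 = 3\) and \(\al_2 - \al_3 = 5\)), which naively is too weak. To close this gap I would combine two ingredients: first, the one-unit reduction in \(\flad{\al}\) afforded by \Cref{lemma:flad_values}\Cref{item:flad_for_alpha} when \(\al_1\) is odd and \(\al_1 - \al_2 = 5\); and second, the inductive hypothesis that \(\al^{-\eps}\) is \fstepped for \(\eps\in\{0,1\}\). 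In each extremal configuration a direct computation of \sprs shows that the ends of the first two or three rows of \(\al\) share a common \spr~\(\eps\), so that simultaneously removing them would leave \(\al^{-\eps}\) with a leading part exceeding its successor by more than \(4\), contradicting \fstepped. A systematic sub-case verification over \(\al_1 - \al_2 \in \{1,3,5\}\) and the parity of \(\al_3\) (which governs whether the end of row~\(3\) also shares the common \spr) then establishes the bound.

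The main obstacle is the sub-case analysis in the different-parity case: the various extremal scenarios must each be ruled out with the right combination of the drop in \(\flad{\al}\) and the inductive hypothesis on \(\al^{-0}\) or \(\al^{-1}\), which requires careful bookkeeping of \sprs at the ends of the first few rows. The edge case \(\hatal = \vn\) (where \(\al\) has exactly two parts of differing parities) is handled by a direct check, reducing to the short list of possibilities permitted by the \(\al^{-\eps}\) constraints.
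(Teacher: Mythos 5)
Your proposal takes a genuinely different route from the paper's, and it contains a gap in the different-parity case that the tools you propose cannot close.

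The paper's proof is much shorter and never examines the combinatorics of $\al$, $\hatal$, or $\al^{-\eps}$ at all. By \Cref{lemma:first_consequences}\Cref{item:2-reg}, $\flad\al$ is also the largest $l$ with $\lad l \subseteq \la$, and likewise $\flad{\hatal}$ the largest with $\lad l \subseteq \la\hooksm\delta$. Taking a node $(r,c)\in\lad{\flad{\hatal}+1}$ absent from $\la\hooksm\delta$, the node $(r+1,c+1)\in\lad{\flad{\hatal}+3}$ cannot lie in $\la$ either, since the rim-hook $\delta$ meets each diagonal at most once. This gives $\flad\al\leq\flad{\hatal}+2$ immediately, with no part-difference bounds. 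Notice also that the paper proves this lemma \emph{before} \Cref{lemma:alpha1-alpha3_bounds}, which it is then used to establish; your route, which leans on strong bounds for $\al_1-\al_3$, risks circularity.

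Your same-parity case is fine. The different-parity case you leave as a sketch, and the central claim of that sketch is false in the extremal configurations. Take $\al=(11,6,2)$: this is not \fstepped (so it should ultimately be excluded), and indeed $\flad\al=5$ while $\flad{\hatal}=\flad{(2)}=1$, so the inequality fails. The ends of all three rows of $\al$ have \spr~$1$, but $\al^{-1}=(9,5,1)$, which is both \fstepped and \fsemic: removing \emph{two} $1$-spin-nodes from row~$1$ (an odd part) keeps pace with the single removals from rows~$2$ and~$3$ (even parts), so the predicted gap of more than $4$ between the first two parts of $\al^{-\eps}$ never opens up. Meanwhile $\al^{-0}=\al$ gives nothing new, and $\hatal=(2)$ is also \fsas. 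Hence none of the inductive hypotheses on $\al^{-\eps}$ and $\hatal$ — which is all your sketch uses — can rule this $\al$ out; the inequality really does need the information carried by $\la$ and the rim-hook $\delta$. Finally, the ``one-unit reduction'' you invoke is not available from \Cref{lemma:flad_values}\Cref{item:flad_for_alpha}, which only gives the two-element membership $\flad\al\in\{\dbl{\al}_1,\dbl{\al}_1-1\}$ and does not pin down the smaller value when $\al_1-\al_2=5$.
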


\begin{proof}
By \Cref{lemma:first_consequences}\Cref{item:2-reg}, the partitions $\la$ and $\dbl\al$ have the same $2$-regularisation, so $\flad\al$ is maximal such that $\lad{\flad{\al}} \subseteq\la$.
Similarly $\flad{\hatal}$ is maximal such that $\lad{\flad{\hatal}} \subseteq \la\hooksm\delta$.
Thus we can find a node $(r,c)\in\lad{\flad{\hatal}+1}$ which is not contained in $\la\hooksm\delta$.
Then the node $(r+1,c+1)$ cannot be contained in $\la$, because $\de$ contains at most one node on any diagonal, and hence cannot contain both $(r,c)$ and $(r+1,c+1)$. The node $(r+1,c+1)$ lies in $\lad{\flad{\hatal}+3}$, and so we deduce $\lad{\flad{\hatal}+3}\nsubseteq\la$, and therefore $\flad\al\ls\flad{\hatal}+2$.
\end{proof}

We can now deduce tighter bounds on the differences between parts of $\al$.

\begin{lemma}
\label{lemma:alpha1-alpha3_bounds}
\begin{enumerate}[(i),beginthm]
    \item
    \label{item:alpha1_even_alpha2_odd}
If \(\al_1\) is even and \(\al_2\) is odd, then \(\al_1 - \al_3 \leq 5\).
    \item
    \label{item:alpha1_odd_alpha2_even}
If \(\al_1\) is odd and \(\al_2\) is even, then \(\al_1 - \al_3 \leq 6\).
\end{enumerate}
\end{lemma}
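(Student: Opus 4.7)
The plan is to deduce both bounds by combining the ladder inequality \(\flad{\al} \leq \flad{\hatal} + 2\) from \Cref{lemma:flad_alpha_vs_alpha_hat} with the explicit values/bounds for \(\flad{\al}\) and \(\flad{\hatal}\) established in \Cref{lemma:flad_values}. The crucial preliminary observation is that since \(\al_1\) and \(\al_2\) have opposite parities, the unique largest odd bar \(\ulob\) of \(\al\) consists exactly of rows \(1\) and \(2\) (it has length \(\al_1+\al_2\)), so \(\hatal = (\al_3, \al_4, \dots)\). In particular \((\hatal)_1 = \al_3\), and hence \Cref{lemma:flad_values}\Cref{item:flad_for_X} gives
\[
\flad{\hatal} = \left\lceil \tfrac{\al_3}{2} \right\rceil.
\]

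For part \Cref{item:alpha1_even_alpha2_odd}, since \(\al_1\) is even, \Cref{lemma:flad_values}\Cref{item:flad_for_alpha_alpha1_even} gives \(\flad{\al} = \al_1/2\). Substituting into \Cref{lemma:flad_alpha_vs_alpha_hat} yields
\[
\tfrac{\al_1}{2} \leq \left\lceil \tfrac{\al_3}{2} \right\rceil + 2.
\]
Splitting by the parity of \(\al_3\), this immediately gives \(\al_1 - \al_3 \leq 4\) when \(\al_3\) is even and \(\al_1 - \al_3 \leq 5\) when \(\al_3\) is odd, both satisfying the claimed bound.

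For part \Cref{item:alpha1_odd_alpha2_even}, we instead use \Cref{lemma:flad_values}\Cref{item:flad_for_alpha}, which gives \(\flad{\al} \geq \dbl{\al}_1 - 1 = \lceil \al_1/2 \rceil - 1 = (\al_1-1)/2\). Substituting into \Cref{lemma:flad_alpha_vs_alpha_hat} yields
\[
\tfrac{\al_1 - 1}{2} \leq \left\lceil \tfrac{\al_3}{2} \right\rceil + 2,
\]
and again splitting by the parity of \(\al_3\) produces \(\al_1 - \al_3 \leq 5\) or \(\al_1 - \al_3 \leq 6\), as required.

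The argument is essentially mechanical once the previous two lemmas are in hand; the only mild subtlety is verifying that \(\hatal\) really starts at \(\al_3\), which relies on the standing assumption that \(\al\) has both odd and even parts so that the largest odd bar combines the top two rows. I do not anticipate any obstacle beyond tracking the ceilings carefully in the case analysis.
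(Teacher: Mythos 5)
Your proposal is correct and follows essentially the same route as the paper: combine the bound $\flad{\al}\le\flad{\hatal}+2$ from \Cref{lemma:flad_alpha_vs_alpha_hat} with the values of $\flad{\al}$ and $\flad{\hatal}$ from \Cref{lemma:flad_values}, using the observation that $(\hatal)_1=\al_3$ when $\al_1\not\equiv\al_2\pmod2$. The only cosmetic difference is that you split on the parity of $\al_3$ where the paper simply bounds $\ceil{\al_3/2}\le\tfrac12(\al_3+1)$; both give the stated bounds immediately.
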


\begin{proof}
In both cases, \(\al_1 \not\equiv \al_2 \ppmod{2}\) and so \((\hatal)_1 = \al_3\).
Then combining \cref{lemma:flad_values}\ref{item:flad_for_X} and \Cref{lemma:flad_alpha_vs_alpha_hat} gives \(\flad{\al} \leq \frac12(\al_3+1) + 2\).
\begin{enumerate}[(i)]
\item
Suppose \(\al_1\) is even and \(\al_2\) is odd.
Then \Cref{lemma:flad_values}\ref{item:flad_for_alpha_alpha1_even} gives \(\flad{\al} = \frac12\al_1\).
Thus \(\frac12 \al_1 \leq \frac12(\al_3+1) + 2\), which gives the claim.

\item
Suppose \(\al_1\) is odd and \(\al_2\) is even.
Then  \Cref{lemma:flad_values}\ref{item:flad_for_alpha} gives \(\flad{\al} \geq \dbl{\al}_1 - 1 = \frac12(\al_1-1)\).
Thus \(\frac12(\al_1-1) \leq \frac12(\al_3 + 1) + 2\), which gives the claim.
\qedhere
\end{enumerate}
\end{proof}

\begin{propn}
\label{prop:final_bounds}
\begin{enumerate}[(i),beginthm]
    \item
    \label{item:best_alpha1_alpha2_bounds}
Let \(1 \leq i \leq \len{\al}\).
Then \(\al_i - \al_{i+1} \leq 4\).
    \item
    \label{item:improved_flad_alpha}
\(\flad{\al} = \dbl{\al}_1 = \ceil*{\frac12\al_1}\).
    \item
    \label{item:improved_alpha1-alpha3_bound_when_alpha1_odd_alpha2_even}
If \(\al_1\) is odd and \(\al_2\) is even, then \(\al_1 - \al_3 \leq 4\).
\end{enumerate}
\end{propn}

\begin{proof}
\begin{enumerate}[(i),beginthm]
    \item
The only cases not covered by \Cref{lemma:alpha1_vs_alpha2_bounds} are as follows.
\begin{description}
\item[\(\al_1\) is even, \(\al_2\) is odd, \(i\in\set{1,2}\):]
By \Cref{lemma:alpha1-alpha3_bounds}\Cref{item:alpha1_even_alpha2_odd} we have \(\al_1 - \al_3 \leq 5\).
Then \(\al_1 - \al_2 \leq 4\) and \(\al_2 - \al_3 \leq 4\) as required.

\item[\(\al_1\) is odd, \(\al_2\) is even, \(i=1\):]
By \Cref{lemma:alpha1-alpha3_bounds}\Cref{item:alpha1_odd_alpha2_even} we have \(\al_1 - \al_3 \leq 6\).
Suppose towards a contradiction \(\al_1 - \al_{2} = 5\); then \(\al_3 = \al_1 - 6\).
The nodes at the end of rows \(1\) and \(2\) have the same \spr, say \(\eps\), and the node at the end of row \(3\) has distinct \spr \(\beps\).
If the node at the end of row \(3\) is \spre, then \(\al^{-\beps}\) is not \fstepped, contradicting the inductive hypothesis.
So we must have \(\al_3 \geq 3\) and \(\al_3 - 1, \al_3 -2 \in \al\).
But then \(\al_3, \al_3 -2 \in \hatal\), contradicting the assumption that \(\hatal\) is \fsemic.
\end{description}

\item 
By part \Cref{item:best_alpha1_alpha2_bounds}, we now have that the conditions of \Cref{lemma:first_ladder_still_hit_going_down}\Cref{item:final_node_ladder_still_hit_going_down} are satisfied for \(\al\) for every \(i\), and so as in the proof of \Cref{lemma:flad_values} the ladder \(\lad{\dbl{\al}_1}\) intersects every row of \(\dbl{\al}\).

\item
Suppose \(\al_1\) is odd and \(\al_2\) is even.
As in the proof of \Cref{lemma:alpha1-alpha3_bounds}, combining \cref{lemma:flad_values}\ref{item:flad_for_X} and \Cref{lemma:flad_alpha_vs_alpha_hat} gives \(\flad{\al} \leq \frac12(\al_3+1) + 2\).
Part \Cref{item:improved_flad_alpha} gives \(\flad{\al} = \dbl{\al}_1 = \frac12(\al_1+1)\).
Thus \(\frac12(\al_1+1) \leq \frac12(\al_3 + 1) + 2\), which gives the claim.
\qedhere
\end{enumerate}
\end{proof}

\subsection{Deducing 4-semicongruence by considering the last partially-occupied ladder}

We also gain information on \(\al\) by considering the ladders which non-trivially intersect \(\dbl\al\).

For a strict partition \(\be\), let \(\tlad{\be}\) be maximal such that \(\dbl{\be}\cap\lad{\tlad\be}\) is nonempty, and set \(\tlad\vn=0\).
Trivially
\[
\tlad{\be} \geq \len{\dbl{\be}} = \begin{cases}
    2\len{\be} & \text{ if \(1 \not\in \be\),} \\
    2\len{\be} - 1 & \text{ if \(1 \in \be\).}
\end{cases}
\]
We will show that for \(\al\) and \(\hatal\) this bound is attained or nearly attained.

\begin{lemma}
\label{lemma:more_ladders_hit_going_down}
Let \(\be\) be any strict partition, and let \(1 \leq i \leq \len{\be}-1\). Suppose either \(\be_i\) is even and \(\be_i - \be_{i+1} \leq  4\), or \(\be_i\) is odd and \(\be_i - \be_{i+1} \leq  5\). Then the largest ladder that meets row $2i-1$ or $2i$ of $\dbl\be$ is smaller than or equal to the largest ladder that meets row $2i+1$ or $2i+2$ of $\dbl\be$.
\end{lemma}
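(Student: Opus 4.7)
The plan is to translate the statement into an arithmetic inequality on ladder indices and verify it by a brief case split on the parities of $\be_i$ and $\be_{i+1}$. Recall that a node $(r,c)$ lies in ladder $\lad{r+c-1}$, and that rows $2i-1$ and $2i$ of $\dbl\be$ have lengths $\ceil{\be_i/2}$ and $\floor{\be_i/2}$ respectively. Hence the largest ladder meeting row $2i-1$ or $2i$ has index
\[
L \;=\; \max\bigl(2i-2+\ceil{\be_i/2},\; 2i-1+\floor{\be_i/2}\bigr),
\]
which simplifies to $2i-1+\be_i/2$ when $\be_i$ is even and to $2i-\tfrac{3}{2}+\be_i/2$ when $\be_i$ is odd. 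Writing $L'$ for the analogous quantity for rows $2i+1,2i+2$ (obtained by substituting $i\mapsto i+1$ and $\be_i\mapsto\be_{i+1}$), the goal becomes $L\leq L'$.

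The crux is then a four-way case split on the parities of $(\be_i,\be_{i+1})$. The key observation is that the parity of $\be_i-\be_{i+1}$ sometimes improves the hypothesised bound: in the odd--odd subcase the bound $\be_i-\be_{i+1}\leq 5$ forces $\leq 4$ (the difference is even), and in the even--odd subcase the bound $\leq 4$ forces $\leq 3$ (the difference is odd). In each of the four subcases $L'-L$ is an explicit affine function of $\be_{i+1}-\be_i$ which is non-negative exactly when the (possibly improved) bound holds; for example in the even--even case $L'-L=2+(\be_{i+1}-\be_i)/2\geq 0$, and in the odd--even case $L'-L=\tfrac{5}{2}+(\be_{i+1}-\be_i)/2\geq 0$, etc.

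No serious obstacle is expected: the lemma is pure arithmetic bookkeeping and the proof can be written in a few lines by tabulating the four parity cases. The only subtlety worth flagging is the need to exploit the parity of $\be_i-\be_{i+1}$ in the two cases where the raw hypothesis is not quite tight enough to yield $L\leq L'$ directly.
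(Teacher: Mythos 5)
Your proof is correct and follows essentially the same route as the paper's: identify the largest ladder indices explicitly and compare. The paper packages the computation more compactly by observing that in both parity cases the largest ladder meeting rows $2i-1,2i$ is $\lad{\lfloor\be_i/2\rfloor+2i-1}$, so the target reduces to the single floor inequality $\lfloor\be_i/2\rfloor\leq\lfloor\be_{i+1}/2\rfloor+2$, which follows directly from the stated bounds without any explicit case split on $\be_{i+1}$ (the floor absorbs the parity refinement you flag). Your four-case tabulation is slightly more verbose but reaches the same conclusion; it is a matter of presentation, not of substance.
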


\begin{proof}
The largest ladder that meets row $2i-1$ or $2i$ is ladder $\lad{\lfloor\be_i/2\rfloor+2i-1}$, while the largest ladder that meets rows $2i+1$ or $2i+2$ is ladder $\lad{\lfloor\be_{i+1}/2\rfloor+2i+1}$.
The assumptions on $\be_i$ and $\be_{i+1}$ imply that $\lfloor\be_i/2\rfloor\ls\lfloor\be_{i+1}/2\rfloor+2$, which is what we need, as illustrated in \Cref{fig:hit_ladders}.
\end{proof}

\begin{figure}[ht]
\centering
\begin{subfigure}[t]{0.49\textwidth}
\centering
\begin{tikzpicture}[x=13pt,y=13pt]
\tgyoung(0pt,0pt,%
    :<\scriptstyle\ceil{\be_i/2}>
    ::
    ;_2\hdts;;;,
    :<\scriptstyle\floor{\be_i/2}>
    ::
    ;_2\hdts;;;,
    :<\scriptstyle\ceil{\be_{i+1}/2}>
    ::
    ;_2\hdts;!\Ypossible;;,!\Ydefault
    :<\scriptstyle\floor{\be_{i+1}/2}>
    ::
    ;_2\hdts;!\Ypossible;
)
\draw[ladder auto, \colourC]
      (10.4,1.4) to (5.6,-3.4) node[color=\colourC,shift={(0.8,0.7)}]{\(\lad{\floor{\be_i/2}+2i-1}\)};
\end{tikzpicture}
    \caption{\(\beta_i\) even, \(\beta_i - \beta_{i+1} \leq 4\)}
    \label{subfig:longest_ladder_even_parts}
\end{subfigure}
\begin{subfigure}[t]{0.49\textwidth}
\centering
\begin{tikzpicture}[x=13pt,y=13pt]
\tgyoung(0pt,0pt,%
    :<\scriptstyle\ceil{\be_i/2}>
    ::
    ;_2\hdts;;;;,
    :<\scriptstyle\floor{\be_i/2}>
    ::
    ;_2\hdts;;;,
    :<\scriptstyle\ceil{\be_{i+1}/2}>
    ::
    ;_2\hdts;!\Ypossible;;,!\Ydefault
    :<\scriptstyle\floor{\be_{i+1}/2}>
    ::
    ;_2\hdts;!\Ypossible;;
)
\draw[ladder auto, \colourC]
     (10.4,1.4) to (5.6,-3.4) node[color=\colourC, shift={(0.8,0.7)}]{\(\lad{\floor{\be_i/2}+2i-1}\)}; 
\end{tikzpicture}
    \caption{\(\beta_i\) odd, \(\beta_i - \beta_{i+1} \leq 5\)}
    \label{subfig:longest_ladder_odd_parts}
\end{subfigure}
    \caption{An illustration of the largest ladder meeting row \(2i-1\) or \(2i\), considered in \Cref{lemma:more_ladders_hit_going_down}, being met and possibly exceeded by row \(2i+1\) or \(2i+2\).
    The dashed boxes indicate nodes that may or may not be present in the Young diagram.
    }
    \label{fig:hit_ladders}
\end{figure}
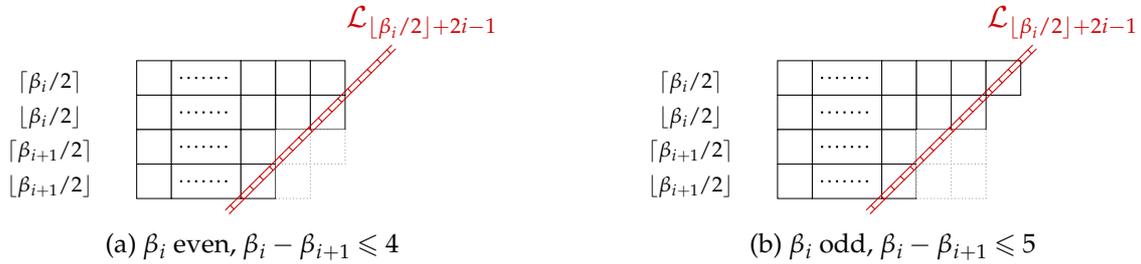

\begin{propn}
\label{prop:tlad_values}
Let \(\beta \in \set{\al, \hatal}\).
Then
\[
    \tlad{\beta} = \begin{cases}
        \len{\dbl{\beta}} & \text{ if \(1\), \(2\) or \(3\) is a part of \(\beta\);} \\
        \len{\dbl{\beta}}+1  & \text{ otherwise.} 
    \end{cases}
\]
In particular, if \(\al\) has at least two odd parts, then \(\tlad{\al} = \len{\dbl{\al}}\). 
\end{propn}

\begin{proof}
Both \(\hatal\) and \(\al\) satisfy the conditions of \cref{lemma:more_ladders_hit_going_down} for every $i$: \(\hatal\) does so since it is \fstepped, and \(\al\) does so by \Cref{prop:final_bounds}\Cref{item:best_alpha1_alpha2_bounds}. Thus the largest ladder that meets $\beta$ meets it in its final row. If the final part of \(\beta\) is \(1\), \(2\) or \(3\) then this is the \(\len{\dbl{\beta}}\) ladder, while if the final part of \(\beta\) is \(4\) then this is the \(\len{\dbl{\beta}}+1\) ladder. The \fstepped property of \(\hatal\) and the bounds on \(\al\) imply that the final part of $\beta$ is at most $4$, so these are all possible cases.

The ``in particular'' follows since if \(\al\) has at least two odd parts then \(\hatal\) has at least one odd part, and since it is \fstepped therefore has \(1\) or \(3\) as a part, and hence so does \(\al\).
\end{proof}

\begin{lemma}
\label{lemma:tlad_alpha_vs_alpha_hat}
\(\tlad{\al} \geq \tlad{\hatal} + 3\).
\end{lemma}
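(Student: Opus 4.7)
The plan is to reduce the inequality to a combinatorial comparison of the lengths $\len{\dbl\al}$ and $\len{\dbl{\hatal}}$, together with the presence of small parts in $\al$ and $\hatal$, using the regularisation theorem and \Cref{prop:tlad_values}.

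First, I would apply \Cref{lemma:first_consequences}\Cref{item:2-reg} to both $\al$ and $\hatal$ (using that $\bspn{\hatal}\propto\bspe{\la\hooksm\de}$, as supplied by \Cref{hatprop}): this gives $\la\reg=\dbl{\al}\reg$ and $(\la\hooksm\de)\reg=\dbl{\hatal}\reg$. Since regularisation preserves the multiset of ladder counts, $\tlad\al$ is the largest ladder meeting $\la$ and $\tlad{\hatal}$ is the largest ladder meeting $\la\hooksm\de$.

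Second, I would invoke \Cref{prop:tlad_values} to write
\[
\tlad\al = \len{\dbl\al}+\epsilon_{\al},\qquad \tlad{\hatal} = \len{\dbl{\hatal}}+\epsilon_{\hatal},
\]
where $\epsilon_\be\in\{0,1\}$ equals $1$ precisely when $\be$ has no part in $\{1,2,3\}$. Since $\hatal$ is obtained from $\al$ by removing the two parts of $\ulob$ (the largest odd and the largest even part of $\al$), we have $\len{\hatal}=\len\al-2$, and hence
\[
\len{\dbl\al}-\len{\dbl{\hatal}} = 4-[1\in\al]+[1\in\hatal].
\]
Combining these identities expresses the quantity $\tlad\al-\tlad{\hatal}$ purely in terms of whether $1,2,3$ appear in each of $\al$ and $\hatal$.

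Third, I would carry out a short case analysis on whether $1\in\al$, $1\in\hatal$, and on which of $\{2,3\}$ are parts of $\hatal$, using the inductive hypothesis that $\hatal$ is \fsas to constrain the possibilities. In most configurations the quantity above is directly at least $3$: when $1\in\al$ and $1\in\hatal$ the gap is $4$; when $1\notin\al$ the $\epsilon$--corrections cancel or reinforce to give gap at least $3$; and in the mixed configuration where $1\in\al$ but $1\notin\hatal$, i.e.\ $1$ is the only odd part of $\al$, the presence of $2$ in $\hatal$ yields exactly gap $3$. The single awkward sub-case is when $1$ is the only odd part of $\al$ \emph{and} $\hatal$ has no part in $\{1,2,3\}$ (so in particular $2\notin\hatal$); here the bare computation yields only gap $2$, but this is precisely the configuration handled by \Cref{lemma:1_only_odd_2_not_in_hat}, which has already established that $\al$ is \fsas, so the present lemma need not be applied in that subcase.

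The main obstacle is this one residual subcase: a direct combinatorial inequality fails there, and must be circumvented by appealing to the prior \Cref{lemma:1_only_odd_2_not_in_hat}; once that exception is acknowledged the remaining cases are routine bookkeeping.
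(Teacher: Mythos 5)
Your proposal is correct and follows essentially the same route as the paper: compare $\len{\dbl\al}$ with $\len{\dbl{\hatal}}$ (the gap being $4$ unless $1$ is the only odd part of $\al$, in which case it is $3$), use \Cref{prop:tlad_values}, and dispose of the residual subcase (where $1$ is the only odd part of $\al$ and $2\notin\hatal$, making the raw estimate give only $2$) by appealing to \Cref{lemma:1_only_odd_2_not_in_hat}, exactly as the paper does with its ``we may assume $2\in\hatal$.'' The only difference is cosmetic: your opening paragraph invoking \Cref{lemma:first_consequences}\Cref{item:2-reg} to relate $\tlad\al,\tlad{\hatal}$ to $\la,\la\hooksm\de$ is not used anywhere in the argument (the entire comparison is between $\dbl\al$ and $\dbl{\hatal}$ directly) and could be omitted.
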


\begin{proof}
It suffices to show \(\len{\dbl{\al}} \geq \tlad{\hatal}+3\).
If \(\al\) has an odd part greater than \(1\), then \(\len{\dbl{\al}} = {\len{\dbl{\hatal}} + 4}\), and 
\Cref{prop:tlad_values} gives \(\len{\dbl{\hatal}} +1 \geq \tlad{\hatal}\), yielding the result.
Otherwise, the only odd part of \(\al\) is \(1\), and \(\len{\dbl{\al}} = \len{\dbl{\hatal}} + 3\).
By \Cref{lemma:1_only_odd_2_not_in_hat}, we may assume \(2 \in \hatal\) in this case.
Then \(\len{\dbl{\hatal}} = \tlad{\hatal}\) by \Cref{prop:tlad_values}, and the result follows.
\end{proof}

Now recall the rim-hook $\de$. Because $|\de|$ is odd, we can write $|\de|=2a+1$. The residues of the nodes in a rim-hook alternate between $0$ and $1$ from one end of the rim-hook to the other, so the multiset of residues of the nodes of $\de$ is either $\{0^{a+1},1^a\}$ or $\{0^a,1^{a+1}\}$.

\begin{lemma}
\label{lemma:delta_meets_tlad}
The rim-hook \(\delta\) meets \(\lad{\tlad{\al}}\) in the first row or column.
In particular, the multiset of residues of \(\delta\) is \(\{0^{a+1},1^a\}\)  if \(\tlad{\al}\) is odd, or \(\{0^a,1^{a+1}\}\) if \(\tlad{\al}\) is even.
\end{lemma}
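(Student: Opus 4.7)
The plan is to combine the regularisation constraints on $\la$ and $\la\hooksm\de$ with the rigidity of rim-hooks to force $\de\cap\lad{\tlad\al}$ to be extremal, and then to read off the residue multiset using that residues alternate along a rim-hook. First, \Cref{lemma:first_consequences}\Cref{item:2-reg} implies that $\la$ and $\dbl\al$ have equal numbers of nodes on each ladder, so $\tlad\al$ is also the largest ladder meeting $\la$; applying the same observation to the proportionality $\bspn\hatal\propto\bspe{\la\hooksm\de}$ furnished by \Cref{hatprop} shows that $\tlad\hatal$ is the largest ladder meeting $\la\hooksm\de$. Consequently $\de$ must contain every node of $\la$ on any ladder strictly larger than $\tlad\hatal$, and by \Cref{lemma:tlad_alpha_vs_alpha_hat} this forces $\de$ to contain all nodes of $\la$ on the three top ladders $\lad{\tlad\al}$, $\lad{\tlad\al-1}$ and $\lad{\tlad\al-2}$.

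The first assertion will then follow by ruling out any node $(r,c)\in\la$ on $\lad{\tlad\al}$ with both $r\gs2$ and $c\gs2$. Since $\la$ is a partition, the node $(r-1,c-1)$ also lies in $\la$, and it sits on $\lad{\tlad\al-2}$, so by the previous step both $(r-1,c-1)$ and $(r,c)$ are forced into $\de$. Writing $\de=\la\ydsm\mu$ and using that rim-hooks contain no $2\times2$ block together with $\mu$ being a partition, a brief case analysis on the membership of $(r,c-1)$ and $(r-1,c)$ in $\mu$ gives a contradiction: either a full $2\times2$ block appears in $\de$, or the partition inequality $\mu_{r-1}\gs\mu_r$ fails. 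Hence every node of $\la$ on $\lad{\tlad\al}$ is either $(1,\tlad\al)$ or $(\tlad\al,1)$; since $\la$ does meet $\lad{\tlad\al}$ and every such node lies in $\de$, $\de$ meets $\lad{\tlad\al}$ in the first row or first column.

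For the ``in particular'' clause, I would use that residues alternate along any rim-hook, so in the odd-length rim-hook $\de$ exactly $a+1$ nodes share the common residue $R$ of the top-right and bottom-left endpoints, while $a$ nodes have the opposite residue. It therefore suffices to exhibit an endpoint of $\de$ on $\lad{\tlad\al}$: if $(1,\tlad\al)\in\la$ then maximality of $\tlad\al$ forces $\la_1=\tlad\al$, so the top-right endpoint $(1,\la_1)$ of $\de$ equals $(1,\tlad\al)$; otherwise $(\tlad\al,1)\in\la$, and the same maximality forces $\len\la=\tlad\al$ with $\la_{\tlad\al}=1$, making $(\tlad\al,1)$ the bottom-left endpoint of $\de$. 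Either way the majority residue $R$ of $\de$ coincides with the common residue of the nodes on $\lad{\tlad\al}$, which is $0$ when $\tlad\al$ is odd and $1$ when $\tlad\al$ is even, yielding the two claimed multisets. The step that I expect to demand the most care is the rim-hook case analysis in the second paragraph, where one must simultaneously track $(r-1,c-1)$, $(r-1,c)$, $(r,c-1)$ and $(r,c)$ against both $\mu$ and $\la$ in order to extract either a $2\times2$ block inside $\de$ or a failure of $\mu$'s monotonicity.
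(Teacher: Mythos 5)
Your proof is correct and follows essentially the same route as the paper's: you derive from \Cref{lemma:first_consequences}\Cref{item:2-reg} that $\tlad\al$ and $\tlad\hatal$ are the largest ladders meeting $\la$ and $\la\hooksm\de$ respectively, use $\tlad\al\geq\tlad\hatal+3$ (\Cref{lemma:tlad_alpha_vs_alpha_hat}) to force $\de$ to absorb every node of $\la$ on the top three ladders, and then eliminate interior nodes of $\lad{\tlad\al}$. One small economy the paper makes at the step you flag as delicate: instead of the case analysis on $(r-1,c)$ and $(r,c-1)$ against a $2\times2$ block and the monotonicity of $\mu$, it suffices to observe that $(r-1,c-1)$ lies on $\lad{\tlad\al-2}$ and so must be in $\de$, yet it cannot be in the rim of $\la$ at all because $(r,c)\in\la$ — so it cannot belong to any rim-hook, giving the contradiction in one line. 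Your elaboration of the ``in particular'' clause (identifying the node on $\lad{\tlad\al}$ as the hand or foot of $\de$ via maximality of $\tlad\al$) is also fine, and is more explicit than what the paper leaves to the reader.
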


\begin{proof}
That \(\delta\) meets the ladder \(\lad{\tlad{\al}}\) is clear from the fact that \(\tlad{\al} > \tlad{\hatal}\). Since furthermore \(\tlad{\al} \geq \tlad{\hatal} + 3\)  by \Cref{lemma:tlad_alpha_vs_alpha_hat}, we deduce that \(\la\hooksm\delta\) does not contain any nodes in the ladder \(\lad{\tlad{\al}-2}\), and hence the only possible nodes of $\lad{\tlad\al}$ that can belong to \(\delta\) are $(1,\tlad\al)$ and $(\tlad\al,1)$. So $\de$ meets \(\lad{\tlad{\al}}\) in the first row or column.
\end{proof}

\begin{propn}
\label{prop:odd_congruence_of_alpha}
The odd parts of \(\al\) are congruent modulo \(4\) (that is, \(\al\) is \fsemic).
\end{propn}

\begin{proof}
If \(\al\) has only one odd part, this is trivially true. So suppose \(\al\) has at least two odd parts. Since $\hatal$ is \fsemic, all the odd parts of $\al$ except possibly the largest are congruent modulo $4$, so it suffices to show that the largest odd part of $\al$ is congruent to the smallest odd part of $\al$.
Write \(2k+1\) for the largest odd part of \(\al\). Note that the smallest odd part of \(\al\) is the smallest odd part of \(\hatal\), and so (since \(\hatal\) is \fstepped) is equal to \(1\) or \(3\).

Recall that (spin-)contents (i.e.\ multisets of (spin-)residues) of partitions characterise blocks (\cref{blocksec}). Since \(\bspn{\al}\) and \(\bspe{\la}\) are proportional, they lie in the same block, so the spin-content of \(\al\) is equal to the content of \(\la\); likewise the spin-content of \(\hatal\) is equal to the content of \(\la\hooksm\de\). It follows that the multiset of \sprs of \(\pi\) is equal to the multiset of residues of \(\de\). Write \(2a+1\) for the size of \(\de\).

Observe that an even part of \(\al\) has equal numbers of nodes of \spr \(0\) and \(1\), a part congruent to \(1\ppmod{4}\) has one more node of \spr \(0\) than \(1\), and a part congruent to \(3\ppmod{4}\) has one more node of \spr \(1\) than \(0\).
Thus we have
\begin{align*}
2k+1 \equiv 1 \ppmod{4}
    &\Iff \text{\(\ulob\) has multiset of \spr{}s \(\{0^{a+1}, 1^a\}\)} && \text{(by the above observation)}\\
    &\Iff \text{\(\de\) has multiset of residues \(\{0^{a+1}, 1^a\}\)} && \text{(as contents characterise blocks)} \\
    &\Iff \text{\(\tlad{\al}\) is odd}                                && \text{(by \Cref{lemma:delta_meets_tlad})} \\
    &\Iff \text{\(\len{\dbl{\al}}\) is odd}                           && \text{(by \Cref{prop:tlad_values})} \\
    &\Iff \text{\(1\) is the smallest odd part of \(\al\)}.
\end{align*}
Therefore the largest odd part of \(\al\) is congruent modulo \(4\) to the smallest, as required.
\end{proof}

The congruence of the odd parts of \(\al\) also allows us to make the following useful deductions.

\begin{cory}
\label{cor:semicongruence_consequences}
\begin{enumerate}[(i),beginthm]
    \item\label{item:removable_nodes_in_odd_parts}
Suppose $1\ls i\ls\len\al$. If $\al_i$ is odd, then the node $(i,\al_i)$ at the end of row \(i\) is \spre.
    \item\label{item:even_parts_reflected_by_restriction}
Let \(\eps \in \set{0,1}\) and let \(m \geq 2\) be even.
If \(m \in \al^{-\eps}\), then \(m \in \al\).
\end{enumerate}
\end{cory}

\begin{proof}
\begin{enumerate}[(i),beginthm]
    \item 
The only way the node $(i,\al_i)$ can fail to be \spre is if $\al_i\gs3$ and both \(\al_i-1\) and $\al_i-2$ are parts of $\al$. But this contradicts \cref{prop:odd_congruence_of_alpha}.

    \item
Let \(j\) be such that \(\al^{-\eps}_j=m\).
If the node $(j,\al_j)$ at the end of row \(j\) in \(\al\) has \spr \(\beps\), then \(\al_j = m\) also.
Otherwise, \((j,\al_j)\) has \spr \(\eps\), and either \(\al_j = m\), in which case the claim holds, or \(\al_j = m+1\).
In this remaining case, the penultimate node in row \(j\), the node \((j,m)\), is not \spre, and so either \(\al_{j+1} = m\) or \(\al_{j+1} = m-1\).
But if \(m\) is even then \(m+1\) and \(m-1\) cannot both be parts of \(\al\) by \Cref{prop:odd_congruence_of_alpha}, so \(\al_{j+1} = m\).
\qedhere
\end{enumerate}
\end{proof}

\subsection{Ruling out configurations using hook lengths}

Before we can complete the proof of \Cref{onlyif} by showing that \(\al\) is \fstepped, we first rule out several tricky configurations by considering restrictions on hook lengths in \(\la\).

Recall that by assumption $\la$ has a unique rim-hook $\de$ of length $|\pi|$.

\begin{lemma}
\label{lemma:long_hook_fills_two_ladders}
If \(\lende \geq \tlad{\al} + \flad{\hatal} + 1\),
then \(\flad{\al} = \flad{\hatal} + 2\).
\end{lemma}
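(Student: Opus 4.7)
The plan is to use the upper bound $\flad{\al} \leq \flad{\hatal}+2$ from \Cref{lemma:flad_alpha_vs_alpha_hat} to reduce the task to proving the reverse inequality. Set $L = \flad{\hatal}$ and $T = \tlad{\al}$ for brevity, and write $\mu = \la \hooksm \delta$ for the partition whose $2$-regularisation agrees with that of $\dbl{\hatal}$ (by \Cref{lemma:first_consequences}\Cref{item:2-reg}); in particular $\lad{l} \subseteq \mu$ for all $l \leq L$. It then suffices to prove $\lad{L+2} \subseteq \la$.

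I would argue by contradiction: suppose some node $(r, c) \in \lad{L+2}$ is not in $\la$, so $r+c = L+2$ and $\la_r \leq L+1-r$. Then the rightmost node of row $r$ in $\la$, if present, is $(r, \la_r)$, which lies on a ladder at most $r+\la_r-1 \leq L$ and therefore must lie in $\mu$. Next, identify $\delta$ with its unique hook cell $(r_0, c_0) \in \la$ (using the standard bijection between rim-hooks of $\la$ and cells of $\la$ of the appropriate hook length), so that $|\delta| = \la_{r_0} + \la'_{c_0} - r_0 - c_0 + 1$, and recall that the ribbon $\delta$ contains the rightmost cell of $\la$ in every row in the range $[r_0, \la'_{c_0}]$.

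The proof then splits into three cases depending on the position of $r$. If $r_0 \leq r \leq \la'_{c_0}$, the rightmost node $(r, \la_r)$ of $\la$ in row $r$ lies simultaneously in $\delta$ (by the ribbon description) and in $\mu$ (by the ladder bound), a contradiction. If $r_0 > r$, then $\la_{r_0} \leq \la_r \leq L+1-r$; combining this with $\la'_{c_0} + c_0 \leq T+1$ (since $(\la'_{c_0}, c_0) \in \la$ lies on a ladder at most $T$) yields $|\delta| \leq L+T+3-r-r_0-2c_0 \leq L+T-2$, contradicting the hypothesis $|\delta| \geq T+L+1$. If $r > \la'_{c_0}$, then $\la'_{c_0} \leq r-1$; combining this with $\la_{r_0} + r_0 \leq T+1$ and $r \leq L+1$ (forced by $c \geq 1$) gives $|\delta| \leq T+r-2 \leq T+L-1$, again contradicting the hypothesis.

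The only real (but mild) obstacle is recalling the dictionary between rim-hooks of $\la$ and hook cells of $\la$: from this both the length formula $|\delta| = \la_{r_0} + \la'_{c_0} - r_0 - c_0 + 1$ and the fact that the ribbon includes $(r', \la_{r'})$ for each $r' \in [r_0, \la'_{c_0}]$ follow, and these are precisely the inputs required for the first case (giving an immediate contradiction) and for the bounds in the other two cases.
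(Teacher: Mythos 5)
Your overall strategy (parametrise the rim-hook by its hook cell $(r_0,c_0)$, take a node of $\lad{L+2}$ missing from $\la$, and split on the position of its row relative to the ribbon) is a genuinely different approach from the paper's, which instead invokes \Cref{lemma:delta_meets_tlad} to place an endpoint of $\delta$ at $(\tlad\al,1)$ or $(1,\tlad\al)$ and then argues purely via the range of diagonals $\delta$ must span. Your route is nicely self-contained. However, there is an off-by-one error in the ladder indexing which leaves a genuine gap in your first case.

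The paper's convention is that $\lad l$ consists of nodes $(r,c)$ with $r+c=l+1$, and you use this convention correctly when you compute the ladder of $(r,\la_r)$ as $r+\la_r-1$. But a node $(r,c)\in\lad{L+2}$ then satisfies $r+c=L+3$, not $r+c=L+2$. With the correct equation, $(r,c)\notin\la$ gives only $\la_r\leq L+2-r$, so the ladder of $(r,\la_r)$ is bounded by $L+1$, not $L$. Since only $\lad L$ (and below) is known to be fully occupied in $\mu$, you cannot conclude in the first case that $(r,\la_r)\in\mu$, and the contradiction evaporates precisely in the sub-case $\la_r=L+2-r$ (ladder exactly $L+1$). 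Your other two cases are unaffected: redoing the arithmetic with $\la_r\leq L+2-r$ and $r\leq L+2$ still yields $|\delta|\leq L+T-1$ (when $r_0>r$) and $|\delta|\leq L+T$ (when $r>\la'_{c_0}$), both contradicting $|\delta|\geq T+L+1$.

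The missing sub-case can be closed, but it needs a further idea: when $\la_r=L+2-r$, the node $(r-1,L+2-r)$ lies on $\lad L\subseteq\mu$ and is the rim cell of $\la$ on the diagonal $L+3-2r$, so $\delta$ misses that diagonal while hitting the adjacent one at $(r,\la_r)$; hence $(r,\la_r)$ must be the hand of $\delta$, i.e.\ $r_0=r$ and $\la_{r_0}=L+2-r_0$. Feeding this into the hook-length formula together with $\la'_{c_0}+c_0\leq T+1$ gives $|\delta|\leq L+T+4-2r_0-2c_0\leq L+T$, a contradiction. This patch is in the same spirit as the paper's diagonal-spanning argument, which suggests it is not easily avoided. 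As written, though, the proof claims a bound ($\leq L$) it does not establish, so it does not quite prove $\lad{L+2}\subseteq\la$.
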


\begin{proof}
Observe that if \(\delta\) meets all of the diagonals intersecting the ladders \(\lad{\flad{\hatal} + 1}\) and \(\lad{\flad{\hatal} + 2}\), then these ladders are both fully-occupied in $\la$ (using the fact that $\lad{\flad{\hatal}}$ is already fully-occupied in \(\la\hooksm\delta\)). Thus if \(\delta\) meets these diagonals, then \(\flad{\al} \leq \flad{\hatal} + 2\), which must then hold with equality by \Cref{lemma:flad_alpha_vs_alpha_hat}.

So suppose \(\lende \geq \tlad{\al} + \flad{\hatal} + 1\), and it suffices to show that \(\de\) meets the diagonals intersecting the ladders \(\lad{\flad{\hatal} + 1}\) and \(\lad{\flad{\hatal} + 2}\), which are the diagonals
\[
    -(\flad{\hatal}+1), \ldots,-1, 0,1, \ldots, \flad{\hatal}+1.
\]

We know from \Cref{lemma:delta_meets_tlad} that \(\delta\) meets the ladder \(\lad{\tlad{\al}}\) in either the first column or first row. Without loss of generality, suppose it is the first column; then the bottom-most node (the ``foot node'') of \(\delta\) is $(\tlad\al,1)$, which lies on the \(1-\tlad{\al}\) diagonal.
Furthermore, since a rim-hook meets each diagonal at most once, \(\delta\) meets all diagonals from $1-\tlad\al$ to $\lende-\tlad\al$ inclusive. 
So provided \(\lende \geq \tlad{\al} + \flad{\hatal} + 1\) as assumed (and also \(\tlad{\al} -1 \geq \flad{\hatal}+1\), which holds since \(\tlad{\al} \geq \tlad{\hatal} + 3 \geq \flad{\hatal}+3\)), the desired diagonals are met.
\end{proof}

\begin{propn}
\label{prop:difficult_even_cases}
\(\al\) is not of the form:
\begin{enumerate}[(i)]
    \item 
    \((4k+2) \sqcup \gamma \sqcup (4k, 4k-1, 4k-4, 4k-5, \ldots, 4,3)\) where \(k \geq 1\) and \(\gamma\) is a strict partition with all parts congruent to \(2\ppmod{4}\) and strictly less than \(4k-2\); or
    \item 
    \((4k+4) \sqcup \gamma \sqcup (4k+2, 4k+1, 4k-2, 4k-3, \ldots, 2,1)\) where \(k \geq 1\) and \(\gamma\) is a strict partition with all parts congruent to \(0\ppmod{4}\) and strictly less than \(4k\).
\end{enumerate}
\end{propn}

\begin{proof}
In both cases we suppose that \(\al\) is of the given form and find a contradiction with \Cref{lemma:long_hook_fills_two_ladders}.
\begin{enumerate}[(i)]

    \item
We compute (using \Cref{lemma:flad_values,prop:tlad_values} if desired)
\begin{align*}
\tlad{\al} &= \len{\dbl{\al}} = 2(2k+1+\len{\gamma}) \leq 6k, \\
\flad{\al} &= \dbl{\al}_1 = 2k+1, \\
\flad{\hatal} &= \dbl{\hatal}_1 = 2k, \\
\lende &= \abs{\pi} = 8k + 1. \\
\intertext{
But then \(\tlad{\al} + \flad{\hatal} + 1 \leq 8k + 1 = \lende\) but \(\flad{\al} \neq \flad{\hatal} + 2\), contradicting \Cref{lemma:long_hook_fills_two_ladders}.
    \item 
We compute (using \Cref{lemma:flad_values,prop:tlad_values} if desired)}
\tlad{\al} &= \len{\dbl{\al}}-1 = 2(2k+3+\len{\gamma})-1 \leq 6k+3, \\
\flad{\al} &= \dbl{\al}_1 = 2k+2, \\
\flad{\hatal} &= \dbl{\hatal}_1 = 2k+1, \\
\lende &= \abs{\pi} = 8k +5.
\end{align*}
But then \(\tlad{\al} + \flad{\hatal} +1 \leq 8k+5 = \lende\) but \(\flad{\al} \neq \flad{\hatal} + 2\), contradicting \Cref{lemma:long_hook_fills_two_ladders}. \qedhere
\end{enumerate}
\end{proof}

The following is in fact a statement about the longest odd hook length in any partition \(\la\) with given \(2\)-regularisation.

\begin{lemma}
\label{lemma:delta_doesnt_meet_too_many_diagonals}
In all cases, \(\lende \leq 2\tlad{\al} - 1\).
Moreover:
\begin{enumerate}[(i)]
    \item
    \label{item:last_ladder_one_node}
if \(\dbl{\al}\) contains only a single node in \(\lad{\tlad{\al}}\), then \(\lende < 2\tlad{\al} -1\);
    \item 
    \label{item:ladders_with_two_nodes}
if \(\dbl{\al}\) contains only a single node in \(\lad{\tlad{\al}}\) and exactly two nodes in each of \(\lad{\tlad{\al}-1}\) and \(\lad{\tlad{\al}-2}\), then \(\lende < 2\tlad{\al}-3\).
\end{enumerate}
\end{lemma}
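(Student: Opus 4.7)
The anchor is the identity $\la\reg = \dbl\al\reg$ from \Cref{lemma:first_consequences}\Cref{item:2-reg}, combined with the criterion in \Cref{subsubsec:residues_and_ladders} that two partitions share a regularisation iff they have the same number of nodes on every ladder. Hence $\la$ and $\dbl\al$ have identical ladder-occupation counts; in particular $\la$ has no nodes in any $\lad{l}$ with $l > \tlad\al$, and the single-node and two-node hypotheses in (i) and (ii) transfer from $\dbl\al$ to $\la$.

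Write the head of $\de$ as $(r_h,c_h)$ and its foot as $(r_f,c_f)$, so that $\lende = (r_f - r_h) + (c_h - c_f) + 1$. Since $(r_h,c_h)$ and $(r_f,c_f)$ both lie in $\la$, both $r_h+c_h$ and $r_f+c_f$ are at most $\tlad\al + 1$; combined with $r_h, c_f \gs 1$ this yields $\lende \ls 2\tlad\al - 1$, giving the main bound. For part (i), equality forces $r_h = c_f = 1$ together with $c_h = r_f = \tlad\al$, placing two distinct nodes $(1,\tlad\al)$ and $(\tlad\al,1)$ on $\lad{\tlad\al}$, contradicting the single-node hypothesis; hence $\lende < 2\tlad\al - 1$.

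For part (ii), by oddness and (i) it suffices to rule out $\lende = 2\tlad\al - 3$. The same arithmetic then yields $r_h + c_f \ls 3$, so $(r_h,c_f) \in \{(1,1),(1,2),(2,1)\}$. The two cases with $(r_h,c_f) \neq (1,1)$ each place head and foot at two distinct nodes of $\lad{\tlad\al}$, contradicting the single-node hypothesis immediately. For $(r_h,c_f) = (1,1)$, the three sub-cases $(c_h,r_f) \in \{(\tlad\al,\tlad\al-2), (\tlad\al-1,\tlad\al-1), (\tlad\al-2,\tlad\al)\}$ remain. In the two ``asymmetric'' sub-cases, the pinned positions identify the two nodes of $\lad{\tlad\al-2}$ as its extremes $(1,\tlad\al-2)$ and $(\tlad\al-2,1)$, from which the parts inequalities $\la_r \ls \tlad\al - r - 2$ for intermediate $r$ force $\la_r < \tlad\al - r$ as well, making a second node on $\lad{\tlad\al-1}$ impossible. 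In the ``symmetric'' sub-case $\la_1 = \len\la = \tlad\al - 1$, the two nodes of $\lad{\tlad\al-1}$ are pinned to its extremes, which in turn forces the single node of $\lad{\tlad\al}$ to be $(\tlad\al - 1, 2)$; then $\la_{\tlad\al-1} \gs 2$ together with $\la_1 = \tlad\al - 1$ implies that $(\tlad\al - 3, 2)$ lies in $\la$, producing a third node on $\lad{\tlad\al-2}$ and contradicting the two-node hypothesis.

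I expect the main obstacle to be the case-splitting in part (ii): each of the three non-immediate sub-cases demands a separate combinatorial argument blending ladder-occupation counts with local shape constraints on $\la$, and very small values of $\tlad\al$ (where some of the ``distinct'' extremal nodes coincide) will likely need individual verification before the uniform argument applies.
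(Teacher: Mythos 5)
Your approach for the unconditional bound and part (i) is essentially the paper's argument in different clothing: both count the diagonals the rim-hook can occupy using the fact (via $\la\reg=\dbl\al\reg$) that $\la$ lies within the first $\tlad\al$ ladders, and you make this explicit by parameterising the head and foot. Part (ii) is where you genuinely diverge: the paper fixes attention on the nodes $(1,\tlad\al-2)$ and $(\tlad\al-2,1)$ and argues the rim-hook is the largest one, whereas you run a systematic case analysis on $(r_h,c_f)$ and then on $(c_h,r_f)$. This is a legitimate alternative and the case skeleton is correct — the arithmetic giving $r_h+c_f\ls 3$ and the subsequent enumeration check out, and the two sub-cases $(r_h,c_f)\in\{(1,2),(2,1)\}$ do indeed pin two distinct nodes onto $\lad{\tlad\al}$.

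However, the details for the $(r_h,c_f)=(1,1)$ branch need repair. In the asymmetric sub-cases your ``intermediate $r$'' inequality $\la_r\ls\tlad\al-r-2$ for $2\ls r\ls\tlad\al-3$ does not by itself cover $r=\tlad\al-2$: you should say explicitly that $\la_{\tlad\al-2}\ls\la_{\tlad\al-3}\ls 1$ rules out the node $(\tlad\al-2,2)$, after which $\lad{\tlad\al-1}$ genuinely has at most one node of $\la$, contradicting the hypothesis. More seriously, in the symmetric sub-case the claim that the single node of $\lad{\tlad\al}$ is \emph{forced} to be $(\tlad\al-1,2)$ is not correct: any node $(r,c)\in\la$ on $\lad{\tlad\al}$ with $r,c\ge 2$ requires both $(r-1,c)$ and $(r,c-1)$ to be in $\la\cap\lad{\tlad\al-1}=\{(1,\tlad\al-1),(\tlad\al-1,1)\}$, and no choice satisfies both constraints simultaneously; together with $\la_1=\len\la=\tlad\al-1$ ruling out the first row and column, there is no admissible position at all. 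That is already the contradiction (violating the definition of $\tlad\al$), and it is simpler than the route you take through $(\tlad\al-3,2)$. As written, the ``forcing'' assertion and the subsequent detour are a genuine logical gap, not merely a stylistic shortcut; and indeed, even granting $(\tlad\al-1,2)\in\la$, the more immediate contradiction is that $(\tlad\al-2,2)$ would then be a third node on $\lad{\tlad\al-1}$. You have correctly flagged that small $\tlad\al$ needs care — in fact one can check that $\tlad\al\ge 5$ under the lemma's hypotheses together with the section's framing assumptions — and I would make that explicit so that the ``intermediate $r$'' range is guaranteed nonempty.
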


\begin{proof}
The diagonals that intersect the first \(\tlad{\al}\) ladders are the diagonals \[-(\tlad{\al}-1), \ldots, -1, 0, 1, \ldots, \tlad{\al} - 1.\]
Thus \(\la\) meets at most \(2\tlad{\al} - 1\) diagonals. Since $\de$ meets each diagonal at most once, we have \(\lende \leq 2\tlad{\al} -1\). In order for equality to be achieved, $\de$ must meet both the \(-(\tlad{\al}-1)\) and \(\tlad{\al}-1\) diagonals, but their intersection with the first \(\tlad{\al}\) ladders is with only \(\lad{\tlad{\al}}\); thus \(\lad{\tlad{\al}}\) must contains at least two nodes of $\la$, proving (i).

Now suppose \(\dbl\al\) is as described in (ii).
By part (i), we have \(\lende < 2\tlad{\al} - 1\); suppose towards a contradiction \(\la\) contains a rim-hook \(\rho\) of length \(2\tlad{\al} - 3\) (because \(\lende\) is necessarily odd, this will suffice).
Then \(\rho\) must meet both the diagonals \(\tlad{\al}-3\) and \(-(\tlad{\al}-3)\), and hence \(\la\) must contain the nodes at start of each of these diagonals: \((1,\tlad{\al}-2)\) and \((\tlad{\al}-2,1)\).
These nodes lie in the ladder \(\lad{\tlad{\al}-2}\), which by assumption has exactly two nodes in \(\dbl{\al}\); thus the nodes \((1,\tlad{\al}-2)\) and \((\tlad{\al}-2,1)\) are the only nodes in \(\la \cap \lad{\tlad{\al}-1}\) and furthermore are contained in \(\rho\), and thus \(\rho\) is the largest rim-hook in \(\la\).
It follows that the nodes in \(\la\) in the ladders \(\lad{\tlad{\al}-1}\) and \(\lad{\tlad{\al}}\) must also be at the start of their diagonals (that is, in the first row or the first column), and \(\rho\) has length \(2\tlad{\al}-2\), a contradiction.
\end{proof}

\begin{propn}
\label{prop:difficult_five_cases}
\(\al\) is not of the form:
\begin{enumerate}[(i)]
    \item 
    \((4k, 4k-4, \ldots, 4) \sqcup (5)\) where \(k \geq 1\); or
        \item 
    \((4k, 4k-4, \ldots, 4) \sqcup (5,2)\) where \(k \geq 1\).
\end{enumerate}
\end{propn}

\begin{proof}
In both cases we suppose that \(\al\) is of the given form and find a contradiction with \Cref{lemma:delta_doesnt_meet_too_many_diagonals}.
In both cases \(\lende = \abs{\pi} = 4k+5\).
\begin{enumerate}[(i)]
    \item
Observe that \(\tlad{\al} = \len{\dbl{\al}} + 1 = 2k+3\) and that \(\dbl{\al}\) contains only a single node of \(\lad{\tlad{\al}}\). But also \(\lende = 2\tlad{\al} - 1\),  contradicting \Cref{lemma:delta_doesnt_meet_too_many_diagonals}\Cref{item:last_ladder_one_node}.

    \item 
Observe that \(\tlad{\al} = \len{\dbl{\al}} = 2k+4\) and that \(\dbl{\al}\) contains only a single node of \(\lad{\tlad{\al}}\). Furthermore \(\lad{\tlad{\al}-1}\) and \(\lad{\tlad{\al}-2}\) contain exactly two nodes of \(\dbl{\al}\) each. But also \(\lende = 2\tlad{\al}-3\), contradicting \Cref{lemma:delta_doesnt_meet_too_many_diagonals}\Cref{item:ladders_with_two_nodes}. \qedhere
\end{enumerate}
\end{proof}

\subsection{4-stepped property}

To complete our proof of \Cref{onlyif}, we show that $\al_i-4\in\al$ whenever $\al_i>4$.

\begin{lemma}
\label{lemma:helping_lemma_for_closure_when_alphai_odd}
Let \(\al_i\) be the largest odd part of \(\al\), and suppose \(\al_i > 4\) and \(\al_i - 4 \not\in \al\).
Then \(\al_{i+1} = \al_i-1\), and if \(\al_i -3 \in \al\) then \(\al_i + 1 \not\in\al\).
\end{lemma}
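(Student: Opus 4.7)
The approach is to argue by contradiction for each clause, applying the inductive hypothesis to $\al^{-\bar\eps}$, where $\eps := \lfloor \al_i/2 \rfloor \bmod 2$ is the \spr of the last node of row $i$. The key residue computation is that the last nodes of rows of sizes $\al_i$, $\al_i - 1$, $\al_i + 1$ and $\al_i - 3$ have \sprs $\eps, \eps, \bar\eps, \bar\eps$ respectively. Since $\al_i$ is the largest odd part, no odd value of $\al_{i+1}$ is possible: by \Cref{prop:odd_congruence_of_alpha} such an $\al_{i+1}$ would be $\equiv \al_i \pmod 4$, and by \Cref{prop:final_bounds}\Cref{item:best_alpha1_alpha2_bounds} within distance $4$ of $\al_i$, forcing $\al_{i+1} = \al_i - 4$, which is excluded by hypothesis. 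A short analysis of $\hatal$ also rules out $i = \len\al$ (the smallest part of $\hatal$ has no room for its $-4$ shift). Hence $\al_{i+1}$ exists and is even, so $\al_{i+1} \in \{\al_i - 1,\, \al_i - 3\}$.

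For part (a), I suppose $\al_{i+1} = \al_i - 3$ for contradiction. The last node of row $i + 1$ is $\bar\eps$-spin-removable (removing it yields row $i+1$ of size $\al_i - 4$, which is strict because $\al_i - 4 \notin \al$ forces $\al_{i+2} < \al_i - 4$), while its penultimate node has \spr $\eps$; hence $\al^{-\bar\eps}_{i+1} = \al_i - 4$. For $\al_i \geq 7$, this even integer is at least $2$, and \Cref{cor:semicongruence_consequences}\Cref{item:even_parts_reflected_by_restriction} forces $\al_i - 4 \in \al$, contradicting the hypothesis. The remaining case $\al_i = 5$ (so $\al_{i+1} = 2$) is more delicate: for $\len\al \geq 4$, iterating 4-stepped on $\hatal = (\al_2, \ldots, \al_{i-1}, 2)$ determines that $\al_{i-k} = 4k+2$, and then $\al^{-1}$ retains $6$ as a part yet reduces $2$ to $1$, violating 4-stepped of the proportional smaller $\al^{-1}$; for $\len\al = 3$ a case check on $\al = (\al_1, 5, 2)$ shows $\al^{-0}$ or $\al^{-1}$ fails 4-stepped; and the base $\al = (5, 2)$ is excluded by \Cref{hatprop} (with $\hatal = \vn$) together with the fact that no hook partition of $7$ has regularization $\dbl{(5,2)}\reg = (4, 2, 1)$, as required by \Cref{lemma:first_consequences}\Cref{item:2-reg}.

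For part (b), assume additionally $\al_i - 3, \al_i + 1 \in \al$. Strictness forces $\al_{i-1} = \al_i + 1$ (no integer lies strictly between), and the absence of an odd part $\al_i - 2$ (by \Cref{prop:odd_congruence_of_alpha}, since $\al_i - 2 \not\equiv \al_i \pmod 4$) gives $\al_{i+2} = \al_i - 3$. In $\al^{-\bar\eps}$, the node $(i-1, \al_i + 1)$ is \emph{not} $\bar\eps$-spin-removable (its removal would collide row $i-1$ with row $i$, and the terminal $\eps$-residue nodes of row $i$ block any compensating $\bar\eps$-removal from row $i$), so $\al_i + 1 \in \al^{-\bar\eps}$; meanwhile $(i+2, \al_i - 3)$ \emph{is} $\bar\eps$-spin-removable exactly as in part (a), giving $\al^{-\bar\eps}_{i+2} = \al_i - 4$ and hence $\al_i - 3 \notin \al^{-\bar\eps}$. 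Applied to the proportional smaller $\al^{-\bar\eps}$ (\Cref{eiprop}), the inductive hypothesis yields 4-stepped; but then $\al_i + 1 > 4$ being a part of $\al^{-\bar\eps}$ demands $(\al_i + 1) - 4 = \al_i - 3 \in \al^{-\bar\eps}$, a contradiction. The principal obstacle throughout is the $\al_i = 5$ subcase in part (a), where \Cref{cor:semicongruence_consequences}\Cref{item:even_parts_reflected_by_restriction} is unavailable (since $\al_i - 4 = 1$ is not $\geq 2$), necessitating a case-by-case structural analysis culminating in direct inspection of hook partitions via \Cref{hatprop}.
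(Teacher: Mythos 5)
There is a genuine gap in your part (a) argument for $\al_i \geq 7$. You claim that $\al_i - 4$ is ``an even integer'' of size at least $2$ and apply \Cref{cor:semicongruence_consequences}\Cref{item:even_parts_reflected_by_restriction}, which is stated only for \emph{even} $m \geq 2$. But $\al_i$ is the largest odd part of $\al$, so $\al_i - 4$ is odd, and the corollary does not apply. In fact $\al_i - 4 \in \al^{-\bep}$ yields no contradiction at all: since $\al_i \in \al^{-\bep}$ as well and $\al_i - 4 \equiv \al_i \pmod 4$, having $\al_i - 4$ as a part of $\al^{-\bep}$ is fully compatible with $\al^{-\bep}$ being \fsas (indeed it is exactly what the $4$-stepped property of $\al^{-\bep}$ demands of the part $\al_i$). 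So the main branch of part (a) produces no contradiction.

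The argument can be rescued for $i>1$ by looking one row up rather than one row down: $\al_{i-1}$ is even with $\al_{i-1} - \al_i \leq 4$ (\Cref{prop:final_bounds}\Cref{item:best_alpha1_alpha2_bounds}), so $\al_{i-1} \in \{\al_i+1, \al_i+3\}$; in either case $\al^{-\bep}_{i-1} = \al_{i-1} > 4$, so $4$-stepped of $\al^{-\bep}$ would require $\al_{i-1}-4 \in \{\al_i-3, \al_i-1\}$ to be a part of $\al^{-\bep}$, yet $\al^{-\bep}$ jumps from $\al_i$ in row $i$ directly to $\al_i - 4$ in row $i+1$. This is essentially the route the paper takes (applying \Cref{cor:semicongruence_consequences} only to the \emph{even} integer $\al_i-1 \geq 4$). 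However, this repair still leaves $i=1$ uncovered: your proof handles $i=1$ explicitly only in the $\al_1 = 5$ base case $\al = (5,2)$, whereas the paper disposes of $i=1$ for all $\al_1 > 4$ via \Cref{prop:final_bounds}\Cref{item:improved_alpha1-alpha3_bound_when_alpha1_odd_alpha2_even} (which forces $\al_3 = \al_1 - 4 \in \al$ when $\al_2 = \al_1 - 3$). You would need to add that step. Your part (b) argument and the $\al_i = 5$ case analysis appear to be sound, though the latter is considerably more laborious than the paper's treatment, which avoids the $\al_i = 5$ split entirely by never aiming for a contradiction from the odd number $\al_i - 4$.
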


\begin{proof}
Our bound on consecutive parts \(\al_i - \al_{i+1} \leq 4\) from \Cref{prop:final_bounds}\Cref{item:best_alpha1_alpha2_bounds}, together with the assumption \(\al_i - 4 \not\in\al\) and that \(\al_i - 2 \not\in \al\) by \Cref{prop:odd_congruence_of_alpha}, gives \(\al_{i+1} \in \set{\al_i - 1, \al_i - 3}\).

First suppose \(i=1\).
Clearly \(\al_1 +1 \not\in\al\).
Meanwhile if \(\al_1 - 1 \not\in\al\) we have \(\al_2 = \al_1 - 3\);
then by \Cref{prop:final_bounds}\Cref{item:improved_alpha1-alpha3_bound_when_alpha1_odd_alpha2_even} we have \(\al_3 = \al_1 -4\), contradicting the assumption on \(\al\).

So suppose \(i>1\), and let \(\eps\) be the \spr of the node at the end of row \(i\) in \(\al\).
If \(\al_i - 3 \not\in \al\), then \(\al_{i+1} = \al_i-1\); it remains to suppose \(\al_i -3  \in \al\) and show that \(\al_i - 1 \in\al\) and \(\al_i+1 \not\in\al\).
Observe that, since \(\al_i - 4 \not\in\al\), there is a \esprm\beps in column \(\al_i - 3\) (and so the inductive hypothesis applies to \(\al^{-\beps}\)).
Since also \(\al_i - 2 \not\in \al\),
we deduce that \(\al_i - 3 \not\in \al^{-\beps}\).
Now, \(\al_{i-1}\) is even and \(\al_{i-1} - \al_i \leq 4\),
so \(\al_{i-1} \in \set{ \al_i + 1, \al_i+3}\).
In either case, the node at the end of row \(i-1\) is not a \esprm\beps, and thus \(\al_{i-1}^{-\beps} = \al_{i-1}\).
So if \(\al_{i-1} = \al_i + 1\), then \(\al_{i-1}^{-\beps} =\al_i+1\) but \(\al_i - 3 \not\in \al^{-\beps}\), contradicting the assumption that \(\al^{-\beps}\) is \fstepped; thus \(\al_i+1 \not\in\al\), and \(\al_{i-1} = \al_i + 3\).
Then \(\al^{-\beps}_{i-1} = \al_i + 3\) and hence \(\al_i - 1 \in \al^{-\beps}\), and so \(\al_i - 1 \in \al\) by \Cref{cor:semicongruence_consequences}\Cref{item:even_parts_reflected_by_restriction}.
\end{proof}

\begin{propn}
\label{prop:closure_when_alphai_odd}
If \(\al_i\) is odd and \(\al_i > 4\), then \(\al_i - 4 \in \al\).
\end{propn}

\begin{proof}
Since \(\hatal\) is \fstepped, the \lcnamecref{prop:closure_when_alphai_odd} holds for all odd parts except possibly the largest. So suppose \(\al_i\) is the largest odd part of \(\al\).
Suppose towards a contradiction \(\al_i - 4 \not \in\al\).
By \Cref{lemma:helping_lemma_for_closure_when_alphai_odd} we have \(\al_{i+1} = \al_i - 1\).
Let \(\eps\) be the \spr of the nodes \((i,\al_i)\) and \((i+1, \al_i-1)\) at the ends of rows \(i\) and \(i+1\); note that these nodes are \spre, so \(\al^{-\eps}\) is \fsas by the inductive hypothesis.

Consider first the case \(\al_i \geq 7\).
We have \(\al^{-\eps}_i = \al_i -1\) and \(\al^{-\eps}_{i+1} = \al_i - 2\).
Then since \(\al^{-\eps}\) is \fstepped and \(\al_i \geq 7\), we have \(\al_i -5, \al_i - 6 \in \al^{-\eps}\).
So $\al$ contains at least two of the integers \(\al_i-4\), \(\al_i-5\) and \(\al_i-6\). 
But \(\al_i - 6 \not\in \al\) by \cref{prop:odd_congruence_of_alpha}, so \(\al_i - 4 \in \al\) as required.

Now consider the case \(\al_i = 5\) (and hence \(\eps = 0\)).
Since any part congruent to \(2\) or \(3\) modulo \(4\) has final node of \spr \(1\), the parts of \(\al\) congruent to \(2\) modulo \(4\) are precisely the same as the parts of $\al^{-0}$ congruent to $2$ modulo $4$.
Since $\al^{-0}$ is \fstepped, these parts must therefore be $2,6,\dots,4r-2$ for some $r \geq 0$.
But by \Cref{lemma:helping_lemma_for_closure_when_alphai_odd} we have \(r \leq 1\).
Thus \(\al\) is of the form \(\al = (4k, 4k-4, \ldots, 8, 5, 4)\) or \(\al = (4k, 4k-4, \ldots, 8, 5, 4, 2)\) for some \(k \geq 1\), contradicting \Cref{prop:difficult_five_cases}.
\end{proof}

\begin{propn}\label{evensub4}
If \(\al_i\) is even and \(\al_i > 4\), then \(\al_i - 4 \in \al\).
\end{propn}

\begin{proof}
Since \(\hatal\) is \fstepped, the claim holds for all even parts except possibly the largest. So assume $\al_i$ is the largest even part of \(\al\). Let \(\eps\) be the \spr of the node $(i,\al_i)$.

By \Cref{cor:semicongruence_consequences}\Cref{item:even_parts_reflected_by_restriction}, it suffices to show that either \(\al^{-\eps}\) or $\al^{-\beps}$ has a part equal to $\al_i-4$. 
Since \(\al_i \in \al^{-\beps}\), it also suffices to show \(\al\) has a \esprm\beps, for then the inductive hypothesis implies that \(\al^{-\beps}\) is \fstepped and hence \(\al_i - 4 \in \al^{-\beps}\).

If \(\al\) has any part congruent to \(\al_i-1 \ppmod{4}\), then \(\al\) has a \esprm\beps by \Cref{cor:semicongruence_consequences}\Cref{item:removable_nodes_in_odd_parts}, and we are done.
Meanwhile, if \(\al\) has any part \(\al_j\equiv\al_i - 2 \ppmod{4}\) with \(\al_j - 1 \not\in \al\), then \(\al\) has a \esprm\beps, so again we are done.

Consider the case \(i > 1\).
Then \(\al_{i-1}\) is odd and \(\al_{i-1} - \al_i \leq 4\) (by \Cref{prop:final_bounds}\Cref{item:best_alpha1_alpha2_bounds}), so \(\al_{i-1} \in \set{\al_i + 1, \al_i + 3}\). If \(\al_{i-1} = \al_i + 3\) then we can use the argument from the last paragraph, so assume \(\al_{i-1} = \al_i + 1\). Then by \Cref{cor:semicongruence_consequences}\Cref{item:removable_nodes_in_odd_parts} \(\al\) has a \esprm\ep in row $i-1$, so the inductive hypothesis implies that $\al^{-\eps}$ is \fstepped.
Since both \(\al_i, \al_i + 1 \in \al\) we have \(\al_i \in \al^{-\eps}\), so \(\al_i - 4 \in \al^{-\eps}\) and we are done.

So we may assume \(i=1\).
\Cref{prop:final_bounds}\Cref{item:best_alpha1_alpha2_bounds} gives \(\al_1 - \al_{2} \leq 4\).
Clearly if \(\al_{2} = \al_1 - 4\) the lemma holds, while if \(\al_{2} = \al_1 -1\) we can use the argument from earlier in the proof. So assume \(\al_{2} \in \set{\al_1 - 2, \al_1 - 3}\).

Suppose \(\al_{2} = \al_1 -3\).
Then \(\al_2\) is odd and, using \Cref{lemma:alpha1-alpha3_bounds}\ref{item:alpha1_even_alpha2_odd}, we have \(\al_3 \in \set{\al_1 -4, \al_1 -5}\).
But we cannot have \(\al_1 - 3, \al_1 -5 \in \al\) by \Cref{prop:odd_congruence_of_alpha}, so \(\al_3 = \al_1 - 4\) and we are done.

We are left with the case \(\al_2 = \al_1 - 2\).
Then \(\al_1 - 2 \in \hatal\), and hence \(\al_1 -2, \al_1 - 6, \ldots \in \al\). From the third paragraph of the proof, we can also assume \(\al_1 - 3, \al_1 - 7, \ldots \in \al\).
Thus if the \lcnamecref{evensub4} is not true, then \(\al\) is of the form 
\begin{align*}
\al &= (4k+2, 4k, 4k-1, 4k-4, 4k-5, \ldots, 4, 3) \sqcup \ga
\\
\intertext{where \(k \geq 1\), and \(\ga\) is a strict partition with all parts congruent to \(2 \ppmod{4}\) and strictly less than \(4k-2\), or}
\al &= (4k+4, 4k+2, 4k+1, 4k-2, 4k-3, \ldots, 2,1) \sqcup \ga
\end{align*}
where \(k \geq 1\), and \(\ga\) is a strict partition with all parts congruent to \(0 \ppmod{4}\) and strictly less than \(4k\).
But these are the cases ruled out by \Cref{prop:difficult_even_cases}.
\end{proof}

This completes the proof of \Cref{onlyif}.

\section{4-stepped-and-semicongruent implies proportional}
\label{sec:fsas_implies_proportional-if}

We now turn towards proving the ``if'' direction of our main theorem, stated below.
We remind the reader that the construction of the partition \(\lx\al\) was given in \Cref{subsec:label_descs}.

\begin{restatable}[(``If'' direction of \Cref{main})]{thm}
{proportionality}
\label{thm:proportionality}
Let \(\al\) be a \fsas strict partition, and let \(\la \in \{\lx{\al}, \lx{\al}'\}\).
Then \(\bspn{\al}\) is proportional to \(\bspe{\la}\).
\end{restatable}

Our strategy is to first show the result for spin characters which are \emph{homogeneous}, meaning that their composition factors in characteristic $2$ are all isomorphic.
We then use (linear combinations of) induction and restriction functors to propagate the property of being proportional.
For this second step, in this section we only state the actions of our functions on relevant characters and show how they allow us to deduce \Cref{thm:proportionality}, deferring the proofs of these actions to the following sections.

\subsection{Homogeneous case}

Our proof starts with homogeneous characters.
We recall some notation from \Cref{sec:background}: 
\(\twoc{a}\) is the \(2\)-core partition \((a,a-1, \ldots, 1)\); \(\btwoc{a}\) is the \fbc whose double is \(\twoc{a}\); and given two partitions $\si$ and $\tau$, we write $\corandquot{\twoc a}\si\tau$ for the partition with $2$-core $\twoc a$ and $2$-quotient $(\si,\tau)$.

\begin{propn}
\label{prop:homogeneous_proportional}
Let \(\al = \btwoc{a} \sqcup 2\twoc{r}\) for some integers \(a, r \geq 0\) with \(a \geq r-1\).
Then:
\begin{enumerate}[(i)]
    \item \(\bspn{\al}\) is homogeneous;
    \item \(\bspe{\lx\al}\) is irreducible;
    \item \(\lx\al = \dbl{\al}\reg\);
    \item \(\bspn{\al} \propto \bspe{\lx\al}\).
\end{enumerate}
\end{propn}

\begin{proof}
\begin{enumerate}[(i),beginthm]
    \item \label{item:homogeneous_spin}
The homogeneous spin characters labelled by \emph{separated} partitions were classified in \cite[Theorem 5.11]{fayers20spin2alt}; in fact since the even parts of \(\al\) are twice a \(2\)-core, we need only use \cite[Proposition 5.8]{fayers20spin2alt} to deduce that \(\bspn{\al}\) is homogeneous.

    \item \label{item:irred_Specht}
The irreducible Specht modules were classified in \cite{jamesmathas1999}; in fact since \(\lx\al = \corandquot{\twoc{a}}{\emptyset}{\twoc{r}} = \twoc{a} + \twoc{r}\) is \(2\)-regular, we need only use \cite[Theorem 2.13]{james_carter} to deduce that \(\bspe{\lx\al}\) is irreducible.

    \item \label{item:dblreg_evaluation}
It is easily seen that \(\dbl{\al} = \twoc{a} \sqcup \twoc{r} \sqcup \twoc{r}\).
Observe that this partition is \(2\)-restricted (i.e.\ consecutive parts differ by at most $1$), and hence its regularisation is equal to its conjugate.
Note also for any partitions \(\la,\mu\) we have \((\la \sqcup \mu)' = \la' + \mu'\).
Thus
\[
\dbl\al\reg
    = (\twoc{a} \sqcup \twoc{r} \sqcup \twoc{r})'
    = \twoc{a}' + \twoc{r}' + \twoc{r}'
    = \twoc{a} + 2\twoc{r}
    = \lx\al.
\]

    \item 
By \Cref{spinregn} and part \Cref{item:dblreg_evaluation} of the present \lcnamecref{prop:homogeneous_proportional}, \(\bspn{\al}\) has least dominant composition factor given by \(\jms{\dbl{\al}\reg} = \jms{\lx\al}\).
Then since \(\bspn{\al}\) is homogeneous by \Cref{item:homogeneous_spin}, all its composition factors are isomorphic to \(\jms{\lx\al}\).
Finally by \Cref{item:irred_Specht} we have \(\jms{\lx\al} = \bspe{\lx\al}\), giving the result. \qedhere
\end{enumerate}
\end{proof}

\begin{rmk}
In fact, the spin characters $\spn\al$ appearing in the above \lcnamecref{prop:homogeneous_proportional} are the only homogeneous spin characters proportional to a linear character in characteristic \(2\), with the exception of \(\al = (4)\) (which satisfies \(\bspn{(4)} \propto \bspe{2,2}=\jms{3,1}\)).
Indeed, suppose \(\bspn{\al}\) is homogeneous and proportional to \(\bspe{\la}\).
Then \(\bspe{\la}\) is also homogeneous, and hence \(\bspe{\la} = \jms{\la\reg}\) is irreducible (since we know \(\jms{\la\reg}\) occurs with multiplicity \(1\)).
James and Mathas's classification of irreducible linear characters \cite{jamesmathas1999} says that \(\bspe{\la}\) is irreducible if and only if \(\la\) or \(\la'\) is \emph{\(2\)-Carter} (a property which implies \(2\)-regular) or \(\la = (2,2)\).

On the other hand, our main theorem gives \(\la \in \set{\lx\al, \lx\al'}\) and hence both components of the \(2\)-quotient of \(\la\) are \(2\)-cores.
If both of these \(2\)-cores are nonempty, then in an abacus display for \(\la\) or \(\la'\) there are beads in adjacent positions, and so \(\la\) and \(\la'\) are both \(2\)-singular (that is, having repeated parts) and hence not \(2\)-Carter.
Furthermore if the nonempty component of the \(2\)-quotient has length greater than the length of the \(2\)-core by \(2\) or more, then again an abacus display for \(\la\) or \(\la'\) has beads in adjacent positions.
Thus the only possibility (up to conjugacy) is
\(\la = \corandquot{\twoc{a}}{\emptyset}{\twoc{r}}\) for some \(a \geq r-1\) (which is precisely the situation of the lemma) 
or \(\la = (2,2)\) (in which case \(\al = (4)\)).
\end{rmk}

\subsection{Application of runner-swapping and quotient-redistributing functions}
\label{subsec:if_application_of_functors}

In order to prove \cref{thm:proportionality} from \cref{prop:homogeneous_proportional}, we apply certain combinations of the induction and restriction functors $e_i,f_i$.
They are both functions on the Grothendieck group, or equivalently on generalised characters.

\begin{restatable}{defn}{runnerswapping}
\label{runnerswapping}
For \(c\in\bbz\) and \(\eps \in \set{0,1}\), define the \emph{\rsf} to be 
\[
\runnerswap{\eps}{c} =
    \sum_{a\geq \max\{0,-c\}} (-1)^{a+c} f_\eps^{(a+c)} e_{\eps}^{(a)}.
\]
\end{restatable}

\begin{restatable}{defn}{quotientredistributing}
\label{quotientredistributing}
For \(d\in\bbz\) and \(\eps \in \{0,1\}\), define the \emph{\qrf} to be
\[
\quotred{\ep}{d} =
    \sum_{a\geq \max\{0,-d\}} (-1)^{a+d} f^{(a+d)}_{\eps} f^{(a+d)}_{\beps} e^{(a)}_{\beps} e^{(a)}_{\eps}.
\]
\end{restatable}

The actions of these functions on certain partitions of interest are given by the next four propositions, whose proofs are deferred to subsequent sections.

We begin with the action of the \rsf. The following proposition is proved in a much more general form in \Cref{subsec:runner-swapping_functor_on_chars}.

\begin{restatable}[(Action of \rsf on certain characters)]{propn}{runnerswapapplicationchars}
\label{runner-swap_application_on_chars}
Let \(r,s \geq 0\) and let \(a \geq 1\). Let \(\eps\) be the residue of \(a+1\) modulo $2$.
Then
\[
\runnerswap{\eps}{-a} \spe{ \corandquot{\twoc{a}}{\twoc{r}}{\twoc{s}} }
    = \pm \spe{ \corandquot{\twoc{a-1}}{\twoc{r}}{\twoc{s}} }.
\]
\end{restatable}

\begin{eg}\label{rsfex}
Take $r=1$, $s=2$, \(a=2\) and hence \(\eps=1\).
Then $\corandquot{\twoc2}{\twoc1}{\twoc2}$ is the partition $\la=(6,3,1^2)$.
The maximum $d$ for which $e_1^{(d)}\spe\la\neq0$ is $3$, so that
\begin{align*}
\runnerswap1{-2}\spe\la&=e_1^{(2)}\spe\la-f_1e_1^{(3)}\spe\la
\\
&=\spe{6,2,1}+\spe{5,3,1}+\spe{5,2,1^2}-f_1\spe{5,2,1}
\\
&=-\spe{5,2^2}
\end{align*}
and indeed $(5,2^2)=\corandquot{\twoc1}{\twoc1}{\twoc2}$.
This is illustrated with Young diagrams and abacus displays below; the removed and added nodes are coloured.
\[
\abacus(lr,bb,nb,bn,nb,nn,nb,nn)
\;=\;
\gyoung(01010!\YpC1!\wht,10!\YpC1!\wht,0,!\YpC1!\wht)
\quad
\xmapsto{\phantom{s}S_1^{(-2)}}
\quad
\gyoung(01010,10,0!\YpA1!\wht,:)
\;=\;
\abacus(lr,bb,bn,nb,bn,nn,bn,nn)
\]
\end{eg}

We have a corresponding statement for spin characters, proved in \Cref{subsec:runner-swapping_functor_on_spin_chars}.

\begin{restatable}[(Action of \rsf on certain spin characters)]{propn}{runnerswapapplicationspinchars}
\label{runner-swap_application_on_spin_chars}
Let \(r,s \geq 0\) and let \(a \geq 1\). Let \(\eps\) be the residue of \(a+1\) modulo $2$.
Then
\[
\runnerswap{\eps}{-a} \spn{ \btwoc{a} \sqcup 2(\twoc{r} + \twoc{s})}
    = \pm \spn{ \btwoc{a-1} \sqcup 2(\twoc{r} + \twoc{s})}.
\]
\end{restatable}

\begin{eg}
As in \cref{rsfex}, we take $r=1$, $s=2$, \(a=2\) and hence \(\eps=1\).
Now $\btwoc2=(3)$, so that $\btwoc{a} \sqcup 2(\twoc{r} + \twoc{s})$ is the partition $\al=(6,3,2)$.
Now we can calculate
\begin{align*}
\runnerswap1{-2}\spn\al&=e_1^{(2)}\spe\al-f_1e_1^{(3)}\spe\al
\\
&=2\spn{5,3,1}+\spn{6,2,1}-\sqrt2f_1\spn{5,2,1}
\\
&=2\spn{5,3,1}+\spn{6,2,1}-(2\spn{6,2,1}+2\spn{5,3,1})
\\
&=-\spn{6,2,1}
\end{align*}
and indeed $(6,2,1)=\btwoc1 \sqcup 2(\twoc1+\twoc2)$.
This is illustrated with Young diagrams below; the removed nodes are coloured.
\[
\gyoung(011001,01!\YpC1!\wht,0!\YpC1!\wht)
\quad
\xmapsto{\phantom{s}S_1^{(-2)}}
\quad
\gyoung(011001,01,0)
\]
\end{eg}

For the action of the \qrf, we restrict attention to (spin) characters in RoCK blocks.
Recall that the block with $2$-core $\twoc a = (a,a-1,\ldots,1)$ and weight $w$ is \emph{RoCK} if $a\gs w-1$.
The following is proved in \Cref{subsec:quotient-redistributing_functor_on_chars}.

\begin{restatable}[(Action of \qrf on certain RoCK characters)]{propn}{quotredapplicationchars}
\label{application_of_quotient-redistributing_functor_on_chars}
Let \(s > r \geq 0\) and let \(a \geq \frac12 (r(r+1)+s(s+1))-1\). 
Let \(\eps\) be the residue of \(a+1\) modulo $2$.
Then
\begin{align*}
\quotred{\eps}{-(s-r-1)} \spe{ \corandquot{\twoc{a}}{\twoc{r}}{\twoc{s}} }
    &= \pm\spe{ \corandquot{\twoc{a}}{\twoc{r+1}}{\twoc{s-1}} }.
\end{align*}
\end{restatable}

\begin{eg}
\label{eg:qr}
Take \(r=0\), \(s=2\) and \(a=2\), and hence \(\eps = 1\).
Then \(\corandquot{\twoc{2}}{\twoc{0}}{\twoc{2}}\) is the partition \((6,3)\).
We calculate
\begin{align*}
\quotred{\eps}{-1} \spe{6,3}
    &= e_0^{(1)} e_1^{(1)} \spe{6,3} - f_1^{(1)}f_0^{(1)} e_0^{(2)} e_1^{(2)} \spe{6,3} \\
    &= \spe{6,1} + \spe{4,3} - f_1^{(1)} f_0^{(1)} \spe{4,1} \\
    &= - \spe{4,1^3}
\end{align*}
and indeed \((4,1^3) = \corandquot{\twoc{2}}{\twoc{1}}{\twoc{1}}\).
This is illustrated with Young diagrams and abacus displays below; the removed and added nodes are coloured.
\[
\abacus(lr,bb,nn,nb,nn,nb,nn)
\;=\;
\gyoung(0101!\YpC01!\wht,1!\YpC01!\wht,:,:)
\quad
\xmapsto{\phantom{s}R_1^{(-1)}}
\quad
\gyoung(0101,1,!\YpA0,1)
\;=\;
\abacus(lr,bb,nb,bb,nn,nb,nn)
\]
\end{eg}

The corresponding statement for spin characters below is proved in \Cref{subsec:quotient-redistributing_functor_on_spin_chars}.

\begin{restatable}[(Action of \qrf on certain RoCK spin characters)]{propn}{quotredapplicationspinchars}
\label{application_of_quotient-redistributing_functor_on_spin_chars}
Let \(s > r \geq 0\) and let \(a \geq \frac12 (r(r+1)+s(s+1))-1\). 
Let \(\eps\) be the residue of \(a+1\) modulo $2$.
Then
\begin{align*}
\quotred{\eps}{-(s-r-1)} \spn{ \btwoc{a} \sqcup 2(\twoc{r} + \twoc{s})}
    &= \pm \sqrt{2} \spn{ \btwoc{a} \sqcup 2(\twoc{r+1} + \twoc{s-1})}.
\end{align*}
\end{restatable}

\begin{eg}
\label{eg:qr_on_spin}
As in \Cref{eg:qr}, we take \(r=0\), \(s=2\), \(a=2\) and hence \(\eps = 1\).
Now \(\btwoc{2} \sqcup 2(\twoc{0}+\twoc{2})\) is the strict partition \((4,3,2)\).
We calculate
\begin{align*}
\quotred{1}{-1} \spn{4,3,2}
    &= e_0^{(1)} e_1^{(1)} \spn{4,3,2} - f_1^{(1)}f_0^{(1)} e_0^{(2)} e_1^{(2)} \spe{4,3,2} \\
    &= \sqrt{2} \spn{4,3}  - \sqrt{2} f_1^{(1)} f_0^{(1)} \spn{3,2} \\
    &= \sqrt{2} \spn{4,3} - 2\sqrt{2} \spn{4,3} \\
    &= - \sqrt{2} \spn{4,3}
\end{align*}
and indeed \((4,3) = \btwoc{2} \sqcup 2(\twoc{1} + \twoc{1})\).
This is illustrated with Young diagrams below; the removed nodes are coloured.
\[
\gyoung(0110,011,!\YpC01!\wht)
\quad
\xmapsto{\phantom{s}R_1^{(-1)}}
\quad
\gyoung(0110,011,:)
\]
\end{eg}

These allow us to complete the goal of this section, restated below.

\proportionality*

\begin{proof} 
By assumption there exist non-negative integers \(a, r, s\) with $s\gs r$ such that \(\al = \btwoc{a} \sqcup 2(\twoc{r}+\twoc{s})\) and \(\la = \corandquot{\twoc{a}}{\twoc{r}}{\twoc{s}}\).
Choose \(b\in\bbn\) such that $b\gs a$ and $b\gs|\twoc{r+s}|-1$. 
Applying \cref{prop:homogeneous_proportional} with $\al=\btwoc b\sqcup2\twoc{r+s}$, we obtain
\[
    \bspn{ \btwoc{b} \sqcup 2\twoc{r+s} } \propto \bspe{ \corandquot{\twoc{b}}{\emptyset}{\twoc{r+s}} }.
\]

Applying \qrf{}s to each side \(r\) times and using \Cref{application_of_quotient-redistributing_functor_on_chars,application_of_quotient-redistributing_functor_on_spin_chars} iteratively -- the choice of \(b\) having been made to satisfy the hypotheses of these \lcnamecref{application_of_quotient-redistributing_functor_on_chars}s -- we obtain
\[
    \bspn{ \btwoc{b} \sqcup 2(\twoc{r}+\twoc{s})} \propto \bspe{ \corandquot{\twoc{b}}{\twoc{r}}{\twoc{s}} }.
\]
Then applying \rsf{}s to each side \(b-a\) times and using \Cref{runner-swap_application_on_chars,runner-swap_application_on_spin_chars} iteratively yields
\[
    \bspn{ \btwoc{a} \sqcup 2(\twoc{r}+\twoc{s})} \propto \bspe{ \corandquot{\twoc{a}}{\twoc{r}}{\twoc{s}} }
\]
as required.
\end{proof}

\begin{rmk}
For characters in RoCK blocks, a possible alternative to our application of \qrf{}s is to use the first author's explicit formul\ae{} for the decomposition numbers of RoCK blocks in terms of symmetric functions \cite[Theorem~5.3]{fayers20spin2alt}.
Using these formul\ae{}, it can be shown that \cref{thm:proportionality} for RoCK blocks is equivalent to the following statement about symmetric functions: if $\si$ and $\tau$ are $2$-cores, then \[P_{\si+\tau}=s_\si s_\tau,\] where $P_\al$ denotes the Schur P-function labelled by a strict partition $\al$, and $s_\la$ is the Schur function corresponding to a partition $\la$ (see Macdonald's book \cite{macd} for basic definitions regarding symmetric functions).
We have not been able to find this result in the literature; our main theorem yields a proof of this identity.
\end{rmk}

\section{Runner-swapping functions}
\label{sec:runner-swapping}

The purpose of this section is to describe the action of the \rsf{}s $\runnerswap\ep c$ on certain characters and spin characters. In fact we will establish the action in much more generality than that required in \Cref{subsec:if_application_of_functors}: we identify, for all characters and a large class of spin characters, a choice of parameters for which the image is (up to sign) a specified single (spin) character.

We recall the definition of the function from \Cref{sec:fsas_implies_proportional-if}.

\runnerswapping*

\subsection{Action on characters}
\label{subsec:runner-swapping_functor_on_chars}

First we consider the effect of $\runnerswap\eps c$ on characters $\spe\la$. We prove the following result (recall for $r\gs0$ we write \(\twoc{r}\) for the \(2\)-core partition \((r, r-1,\dots,1)\), and given a bipartition \(\bipla\), we write \(\corandbipquot{\twoc{r}}{\bipla}\) for the partition with $2$-core $\twoc r$ and $2$-quotient $\bipla$).

\begin{thm}
\label{runner-swap_application_on_charsgeneral}
Let \(\la = \corandbipquot{\twoc{a}}{\bipla}\) be any partition.

\begin{enumerate}[(i)]
    \item
Let \(\eps\) be the residue of \(a\) modulo \(2\). Then
\begin{align*}
    \runnerswap{\eps}{a+1} \spe{ \corandbipquot{\twoc{a}}{\bipla} }
     = \pm \spe{ \corandbipquot{\twoc{a+1}}{\bipla} }. \\
\intertext{ \item
\csname cref@label\endcsname{item:runner-swap_application_on_charsgeneral_case_for_application} 
Suppose \(a \geq 1\), and let $\ep$ be the residue of $a+1$ modulo $2$. Then }
\runnerswap{\eps}{-a} \spe{ \corandbipquot{\twoc{a}}{\bipla} }
    = \pm \spe{ \corandbipquot{\twoc{a-1}}{\bipla} }.
\end{align*}
\end{enumerate}
\end{thm}

The special case of \cref{runner-swap_application_on_charsgeneral}\Cref{item:runner-swap_application_on_charsgeneral_case_for_application} in which \(\bipla = (\twoc{r}, \twoc{s})\) gives \cref{runner-swap_application_on_chars} required in the proof of our main theorem.

To prepare for the proof, we introduce some notation. Recall that $\cald_d$ is the $d$th diagonal in $\bbn^2$. For a partition $\la$ and $d\in\bbz$, we write
\[
\addsondiag{d}{\la}=
\begin{cases}
\phantom{-}1&\text{if $\la$ has an addable node in $\cald_d$;}
\\
-1&\text{if $\la$ has a removable node in $\cald_d$;}
\\
\phantom{-}0&\text{otherwise}.
\end{cases}
\]
If $\mu$ is another partition, we write $\diffondiag d\mu\la$ for the number of nodes of $\mu$ in $\cald_d$ minus the number of nodes of $\la$ in $\cald_d$.

\begin{lemma}\label{ldd}
Let $\la$ be a partition, $c\in\bbz$, and $\ep\in\{0,1\}$.
If \(\mu\) is a partition such that $\spe\mu$ appears with non-zero coefficient in $\runnerswap{\eps}{c}\spe\la$, then $\diffondiag d\mu\la \leq \addsondiag{d}{\la}$ for all \(d \in \bbz\) with \(d \equiv \eps \ppmod{2}\).
\end{lemma}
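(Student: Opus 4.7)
My plan is via an alternating-sum cancellation argument that exploits the fact that applying $e_\eps$ or $f_\eps$ modifies only nodes of residue $\eps$, leaving all $\beps$-nodes unchanged.

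First I would apply the branching rule (\cref{branching}) to write the coefficient of $\spe\mu$ in $\runnerswap\eps c\spe\la$ as the alternating sum $\sum_{a}(-1)^{a+c}N_a(\mu)$, where $N_a(\mu)$ counts partitions $\nu$ obtained from $\la$ by removing $a$ $\eps$-nodes such that $\mu$ is in turn obtained from $\nu$ by adding $a+c$ $\eps$-nodes. I would then focus attention on a single diagonal $\cald_d$ with $d\equiv\eps\ppmod{2}$: the crucial observation is that the adjacent diagonals $\cald_{d-1}$ and $\cald_{d+1}$ consist entirely of $\beps$-nodes and hence remain unchanged throughout the process. Because removability and addability of a node on $\cald_d$ are determined by the state of $\cald_d$ together with these fixed adjacent diagonals, the evolution of each $\eps$-diagonal is independent of the others. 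I would further show that on any such $\cald_d$ at most one node can be removed and at most one added over the whole process: once the unique removable $\eps$-node on $\cald_d$ in $\la$ is taken away, the next candidate is blocked by its $\beps$-neighbour immediately to the right (or below), which is fixed, and an analogous argument applies to addition.

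Let $r_d,s_d\in\{0,1\}$ denote the numbers of nodes respectively removed from and added to $\cald_d$ along a given path. Then $s_d-r_d=\diffondiag d\mu\la$, and the validity of each pair $(r_d,s_d)$ depends only on whether $\la$ admits a removable or addable $\eps$-node on $\cald_d$. Since the constraints factor across diagonals, so does the coefficient:
\[
\left[\runnerswap\eps c\spe\la:\spe\mu\right]=(-1)^c\prod_{d}A_d,\qquad A_d=\sum_{(r_d,s_d)\text{ valid}}(-1)^{r_d}.
\]

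It then remains to compute $A_d$ case by case according to $\addsondiag d\la$. When $\addsondiag d\la\in\{0,+1\}$ we have $r_d=0$ forced, so $A_d\in\{0,1\}$, and $A_d\neq 0$ holds exactly when some valid $s_d$ exists, which is the case precisely when $\diffondiag d\mu\la\leq\addsondiag d\la$. When $\addsondiag d\la=-1$ there are three subcases: $\diffondiag d\mu\la=-1$ admits only $(r_d,s_d)=(1,0)$, giving $A_d=-1$; $\diffondiag d\mu\la=0$ admits both $(0,0)$ and $(1,1)$, giving the key cancellation $A_d=1-1=0$; and $\diffondiag d\mu\la>0$ admits no valid path. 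In every case $A_d\neq 0$ forces $\diffondiag d\mu\la\leq\addsondiag d\la$, which is the lemma. The main obstacle is verifying the per-diagonal independence and the bound $r_d,s_d\leq 1$; once that is in place, the remaining case analysis is short.
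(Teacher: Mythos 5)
Your argument is correct and is essentially the same as the paper's, just organized more globally. The paper handles the case $\addsondiag{d}{\la}\in\{0,1\}$ directly (since then $\diffondiag{d}{\mu}{\la}\geq 0$ forces $\nu$ to agree with $\la$ on $\cald_d$, so the difference is at most $\addsondiag{d}{\la}$) and for $\addsondiag{d}{\la}=-1$ with $\diffondiag{d}{\mu}{\la}=0$ it pairs each intermediate $\nu$ with the partition obtained by toggling the node of $\cald_d$; the pairing flips the sign $(-1)^{|\la|-|\nu|}$, so the coefficient vanishes. Your proof makes the same observation (at most one removal/addition per $\eps$-diagonal, with $\beps$-diagonals fixed) but goes further and proves a full factorization of the coefficient as $(-1)^c\prod_d A_d$, then reads off which factors vanish; your $A_d = 1-1 = 0$ in the case $\addsondiag{d}{\la}=-1$, $\diffondiag{d}{\mu}{\la}=0$ is exactly the paper's involution restricted to one diagonal. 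The factorization is a genuinely stronger intermediate claim than the paper needs and you correctly flag its verification (independence across diagonals plus $r_d,s_d\leq 1$) as the main point to nail down; it does go through, since removable $\eps$-nodes of $\la$ are independent outer corners and all constraints on $\nu$ are per-diagonal. One minor imprecision: in the case $\addsondiag{d}{\la}=0$ you assert $A_d\neq 0$ iff $\diffondiag{d}{\mu}{\la}\leq\addsondiag{d}{\la}$, but the forward direction can fail ($\diffondiag{d}{\mu}{\la}=-1\leq 0$ still gives $A_d=0$); this is harmless since the lemma only needs the converse, but you should state only the implication you actually use.
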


\begin{proof}
It is clear that if $\spe\mu$ appears with non-zero coefficient in $\runnerswap{\eps}{c}\spe\la$ then $\mu$ is obtained from $\la$ by removing and then adding $\ep$-nodes. 
In particular, $\la$ and $\mu$ agree on $\cald_d$ for every $d\nequiv\ep\ppmod2$.
This implies that $-1 \ls \diffondiag{d}{\mu}{\la} \ls 1$, and $\diffondiag{d}{\mu}{\la}=0$ if $\addsondiag{d}{\la}=0$.
This automatically gives the desired result for all $d$ for which $\addsondiag{d}{\la} \geq 0$.
So we take $d$ with $\addsondiag{d}{\la}=-1$, and assume for a contradiction that $\diffondiag{d}{\mu}{\la}=0$; that is, the removable node of $\la$ in $\cald_d$ is also present in $\mu$.

Let $\La$ be the set of partitions $\nu$ such that $\la$ and $\mu$ can each be obtained from $\nu$ by adding $\eps$-nodes.
Then $\La$ is partitioned into pairs $\{\nu^0,\nu^1\}$, where the removable node of $\la$ in $\cald_d$ is also a removable node of $\nu^0$, and $\nu^1$ is obtained from $\nu^0$ by removing this node.
Each such pair satisfies $|\nu^1|=|\nu^0|-1$, so the contributions to $\ip{\runnerswap{\eps}{c}\spe\la}{\spe\mu}$ from $\nu^0$ and $\nu^1$ cancel out. Summing over all such pairs, we find that $\ip{\runnerswap{\eps}{c}\spe\la}{\spe\mu}=0$, contrary to assumption.
\end{proof}

Given a partition $\la$, let $\swp\la\ep$ be the partition obtained from $\la$ by simultaneously adding all the addable $\eps$-nodes and removing all the removable $\eps$-nodes.
We can equivalently characterise $\swp\la\ep$ as the unique partition such that:
\begin{itemize}
\item
$\swp\la\ep$ is obtained from $\la$ by adding and/or removing $\eps$-nodes, and
\item
$\diffondiag{d}{\swp\la\ep}{\la} = \addsondiag{d}{\la}$ for every $d\equiv\eps\ppmod2$.
\end{itemize}
Let $\netaddables\ep\la$ be the number of addable $\ep$-nodes of $\la$ minus the number of removable $\ep$-nodes (that is, \(\netaddables{\ep}{\la} = \sum_{d \equiv \eps \ppmod{2}} \addsondiag{d}{\la}= |\swp\la\ep|-|\la|\)).

\begin{propn}
\label{characters:action_of_runner-swapping_functor}
Let \(\la\) be a partition and $\ep\in\{0,1\}$.
Then \(\runnerswap{\eps}{\netaddables{\ep}{\la}}\spe{\la} = \pm \spe{\swp\la\ep}\).
\end{propn}

\newcommand{\sumoverd}{\sum_{\substack{d \equiv \eps \\ \scriptscriptstyle{(\operatorname{mod}\,2)}}}}

\begin{pf}
Suppose $\spe\mu$ appears with non-zero coefficient in $\runnerswap{\eps}{\netaddables{\ep}{\la}}\spe\la$.
In particular, \(\mu\) can be obtained from \(\la\) by removing then adding \(\eps\)-nodes, with a net gain of \(\netaddables{\eps}{\la}\)-many nodes, and so \(|\mu| - |\la| = \netaddables{\eps}{\la}\).
By definition of \(\netaddables{\eps}{\la}\), we therefore have 
\begin{align*}
    |\mu|-|\la| &= \sumoverd \addsondiag{d}{\la}.
\intertext{
Meanwhile, by considering the net number of nodes added to each diagonal individually, we have
}
    |\mu| - |\la| &= \sumoverd \diffondiag{d}{\mu}{\la}.
\end{align*}
Equating these two expressions for \(|\mu|-|\la|\) gives
\begin{equation}
        \sumoverd \diffondiag{d}{\mu}{\la}
        \ \ = \
        \sumoverd \addsondiag{d}{\la}. \tag{\(\dagger\)} \label{eq:summed_equality}
\end{equation}
But by \Cref{ldd}, we have ${\diffondiag{d}{\mu}{\la}} \leq \addsondiag{d}{\la}$ for every $d\equiv\ep\ppmod2$.
Thus for \eqref{eq:summed_equality} to hold, the equality \({\diffondiag{d}{\mu}{\la}} = \addsondiag{d}{\la}\) must hold for every \(d \equiv \eps \ppmod{2}\).
Then \(\mu\) fulfils the characterisation of \(\swp\la\ep\) given directly above the statement of this proposition, so \(\mu = \swp\la\ep\).

It remains to determine the coefficient of $\spe{\swp\la\ep}$ in $\runnerswap{\eps}{\netaddables{\ep}{\la}}\spe\la$.
It is easy to see that the only partition $\nu$ such that $\la$ and $\swp\la\ep$ are both obtained from $\nu$ by adding $\eps$-nodes is $\la \cap \swp\la\ep$, the partition obtained from \(\la\) by removing all removable \(\eps\)-nodes.
Thus
\[
\ip{\runnerswap{\eps}{\netaddables{\ep}{\la}}\spe\la}{\spe{\swp\la\ep}}
    = (-1)^{|\swp\la\ep|-|\la\cap\swp\la\ep|}.\qedhere
\]
\end{pf}

In view of \Cref{characters:action_of_runner-swapping_functor}, we would like to understand the partition $\swp\la\ep$.
Another equivalent characterisation of \(\swp\la\ep\) is via the following standard abacus combinatorics, motivating the name ``\rsf'' for \(\runnerswap{\eps}{c}\).

\begin{propn}
\label{prop:swp_is_runner-swap}
Let \(\la\) be a partition and let \(\eps \in \set{0,1}\).
An abacus display for \(\swp{\la}{\eps}\) can be obtained by choosing an \(r\)-bead abacus display for \(\la\) where \(r \equiv \beps \ppmod{2}\) and swapping the runners. 
\end{propn}

\begin{proof}
It is well-known (and easy to see) how to identify addable and removable nodes in an abacus display: with the number of beads being congruent to $\bep$ modulo $2$, addable $\ep$-nodes correspond to beads on runner $0$ with no bead immediately to the right, and adding these nodes corresponds to moving these beads to the right; dually, removable $\ep$-nodes correspond to beads on runner $1$ with no bead immediately to the left, and removing these nodes corresponds to moving these beads to the left.
So adding all the addable $\ep$-nodes and removing all the removable $\ep$-nodes can be accomplished on the abacus by moving every bead on runner $0$ to the right and every bead on runner $1$ to the left, which is the same as swapping the two runners.
\end{proof}

The abacus interpretation in \cref{prop:swp_is_runner-swap} enables us to describe \(\swp\la\ep\) in terms of \(2\)-cores and $2$-quotients as follows.

\begin{lemma}
\label{lemma:computing_swp_partition_on_cores_and_quotients}
\begin{enumerate}[(i), beginthm]
    \item\label{item:swp_on_cores}
Let \(a \geq 0\) and let \(\eps \in \set{0,1}\).
Then
\[
    \netaddables{\eps}{\twoc{a}} = \begin{cases}
        a+1 & \text{ if \(\eps \equiv a \ppmod{2}\);} \\
        -a & \text{ if \(\eps \not\equiv a \ppmod{2}\),} 
    \end{cases}
\qquad
    \swp{\twoc{a}}{\eps} = \begin{cases}
        \twoc{a+1} & \text{ if \(\eps \equiv a \ppmod{2}\);} \\
        \twoc{a-1} & \text{ if \(\eps \not\equiv a \ppmod{2}\),} 
    \end{cases}
\]
unless \(a=0\) and \(\eps=1\), in which case \(\netaddables{1}{\twoc{0}} = 0\) still holds but \(\swp{(\twoc{0})}{1} = \twoc{0}\).
    \item\label{item:swp_is_swp_on_core}
Let \(\nu\) be a \(2\)-core partition, let \(\la^{(0)}, \la^{(1)}\) be partitions and let \(\eps \in \set{0,1}\).
Then
\[
    \swp{\corandquot{\nu}{\la^{(0)}}{\la^{(1)}}}{\eps} = \corandquot{\swp{\nu}{\eps}}{\la^{(0)}}{\la^{(1)}}
\]
unless \(\nu = \emptyset\) and \(\eps=1\), in which case \(\swp{\corandquot{\emptyset}{\la^{(0)}}{\la^{(1)}}}{1} = \corandquot{\emptyset}{\la^{(1)}}{\la^{(0)}}\).
In any case, we have \(\netaddables{\eps}{\corandquot{\nu}{\la^{(0)}}{\la^{(1)}}} = \netaddables{\eps}{\nu}\).
\end{enumerate}
\end{lemma}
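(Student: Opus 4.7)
The plan is to treat the two parts separately: part (i) by a direct combinatorial analysis of $[\twoc{a}]$, and part (ii) by appealing to the abacus interpretation established in Proposition~\ref{prop:swp_is_runner-swap}.

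For part (i), I would enumerate the addable and removable nodes of $\twoc{a} = (a, a-1, \ldots, 1)$ directly: for each $i = 1, \ldots, a$ the end-of-row node $(i, a+1-i)$ is removable, and for each $i = 1, \ldots, a+1$ the node $(i, a+2-i)$ is addable; computing $c-r$ modulo $2$ shows that every removable node has residue $a+1 \pmod 2$, while every addable node has residue $a \pmod 2$. Hence when $\eps \equiv a \pmod 2$, $\netaddables{\eps}{\twoc{a}} = a+1$ and $\swp{\twoc{a}}{\eps}$ is obtained by adjoining all $a+1$ addable nodes, giving $\twoc{a+1}$; when $\eps \not\equiv a \pmod 2$, $\netaddables{\eps}{\twoc{a}} = -a$ and $\swp{\twoc{a}}{\eps}$ is obtained by removing all $a$ removable nodes, giving $\twoc{a-1}$. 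The degenerate case $a = 0$, $\eps = 1$ is handled by observing that $\emptyset$ has no $1$-nodes of either kind.

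For part (ii), I would apply Proposition~\ref{prop:swp_is_runner-swap} with an $r$-bead abacus display for $\la$, where $r \equiv \beps \pmod 2$, and swap the two runners. Because the components of the $2$-quotient are read off runner-by-runner, the swap preserves the two shapes but possibly relabels them; the paper's convention --- that the component $\la^{(1)}$ is assigned to the runner with strictly more beads, or to runner~$1$ if the counts are equal --- is what dictates this relabelling. When $\nu \neq \emptyset$, one runner strictly dominates in bead count both before and after the swap (the direction of strict domination merely flips), and a short case check shows the labels align so that $\mu^{(i)} = \la^{(i)}$ for both $i$. When $\nu = \emptyset$, the bead counts are equal precisely when $r$ is even (i.e.\ $\eps = 1$), in which case the convention genuinely exchanges the two components; when $\eps = 0$, runner~$0$ has exactly one more bead than runner~$1$ before the swap and one fewer after, again preserving the labels. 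The $2$-core of the swapped partition then equals $\swp{\nu}{\eps}$ by applying part~(i) to the ``pushed-up'' bead configuration.

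Finally, the equality $\netaddables{\eps}{\la} = \netaddables{\eps}{\nu}$ will follow from the identity $\netaddables{\eps}{\la} = n_0 - n_1$ (where $n_i$ counts beads on runner~$i$), which is proved by the same telescoping pairing of adjacent abacus positions used in the proof of Proposition~\ref{prop:swp_is_runner-swap}; since this bead count difference depends only on the $2$-core, the claim reduces to part~(i). I expect the main obstacle to be the bookkeeping required for the various parity cases (whether $r \equiv \ell(\nu) \pmod 2$, and whether $\nu$ is empty), which interact non-trivially with the $2$-quotient convention; a clean exposition will likely split part~(ii) into these sub-cases explicitly.
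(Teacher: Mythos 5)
Your proof is correct and follows essentially the same route as the paper's: part~(i) is a direct node count, and part~(ii) goes through Proposition~\ref{prop:swp_is_runner-swap} together with a careful case analysis of how the paper's $2$-quotient convention interacts with the runner swap. Your bead-counting identity $\netaddables{\eps}{\la}=n_0-n_1$ is a nice way to make explicit the final claim $\netaddables{\eps}{\la}=\netaddables{\eps}{\nu}$, which the paper's proof leaves implicit.
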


\begin{pf}
Part (i) is routine (with or without \Cref{prop:swp_is_runner-swap}). For part (ii) we apply \Cref{prop:swp_is_runner-swap}: write \(\la = \corandquot{\nu}{\la^{(0)}}{\la^{(1)}}\) and choose an \(r\)-bead abacus display for \(\la\) where \(r \equiv \beps \ppmod{2}\), so that an \(r\)-bead abacus display for \(\swp{\la}{\ep}\) is obtained by swapping the runners. Then clearly the \(2\)-core of \(\swp{\la}{\ep}\) has \abd given by swapping the runners in an \(r\)-bead abacus display for \(\nu\), which by \Cref{prop:swp_is_runner-swap} again is precisely \(\swp{\nu}{\eps}\).
Meanwhile by our unusual convention for the \(2\)-quotient (\Cref{subsec:abacus_and_quotient}), swapping the runners has no effect on the \(2\)-quotient provided \(\nu \neq \emptyset\) or \(\eps =0\), for then one runner has strictly more beads than the other and we take \(\la^{(1)}\) to be given by the runner with more beads.
In the case \(\nu = \emptyset\) and \(\eps=1\), however, the runners have equal numbers of beads so \(\la^{(1)}\) is given by runner \(1\) before and after swapping, so the components of the \(2\)-quotient are swapped.
\end{pf}

\Cref{runner-swap_application_on_charsgeneral} now follows by combining \Cref{characters:action_of_runner-swapping_functor,lemma:computing_swp_partition_on_cores_and_quotients}.

\begin{rmk}
\begin{enumerate}[(i), beginthm]
    \item 
Our \rsf{}s are not universal in the sense that for a given pair $\la,\ep$, we need to choose a particular value $c$ to obtain $\runnerswap{\eps}c\spe{\la} = \pm \spe{\swp\la\ep}$.
In fact, one can define a universal runner-swapping function $S_\ep$ such that $S_\ep \spe{\la} = \pm \spe{\swp\la\ep}$ for all $\la$, by
\[
S_\ep=\exp(-f_\ep)\exp(e_\ep)\exp(-f_\ep)=\sum_{a,b,c\gs0}(-1)^{a+c}f^{(a)}_\ep e^{(b)}_\ep f^{(c)}_\ep
\]
(the expression $\exp(-f_\ep)\exp(e_\ep)\exp(-f_\ep)$ arises from the general theory of $\mathfrak{sl}_2$-representations, and is taken from \cite[\S4.2]{cr}). But we prefer our family of \rsf{}s $\runnerswap\ep c$, which afford simpler calculations.

    \item 
In the case where \(\la\) is RoCK, so that the runner with more beads has a bead on every row that the other runner does, all but one term in \(\runnerswap{\eps}{\netaddables{\eps}{\la}} \spe{\la}\) vanishes and our function becomes a single (divided power of a) induction or restriction functor.
In this case, Scopes showed that the functor in fact defines a Morita equivalence between the relevant blocks \cite{scopes91}.
Our application in \Cref{subsec:if_application_of_functors}, however, is to use \rsf{}s to reach non-RoCK partitions.
\end{enumerate}
\end{rmk}

\begin{rmk}
\label{remark:runner-swapping_functor_odd_p}
The proofs of \Cref{ldd,characters:action_of_runner-swapping_functor,prop:swp_is_runner-swap}
apply just as well with residues taken modulo \(p\) and abacus displays with \(p\) runners, for \(p\) an odd prime.
That is, we have constructed a function that, with suitable choice of parameters, adds all addable nodes and removes all removable nodes of a given residue, or equivalently swaps a pair of adjacent runners in an abacus display. With the appropriate convention for the $p$-quotient (called the ``ordered $p$-quotient'' in \cite{fayers2016}, where the reader can find more background on the appropriate combinatorics when $p$ is odd), this function preserves the $p$-quotient -- except (as in the exceptional case in \cref{lemma:computing_swp_partition_on_cores_and_quotients}(ii)) where the two runners in question have the same number of beads, in which case the corresponding components of the $p$-quotient are swapped.

For example, let \(p=5\) and consider the function \(S_2^{(1)} = \sum_{a \geq 0} (-1)^{(a+1)} f_2^{(a+1)} e_2^{(a)}\) applied to the character labelled by the partition \((9,8,5^2,2,1^3)\).
We have
\begin{align*}
S_2^{(1)} \spe{9,8,5,1^5}
    &= f_2^{(1)} \spe{9,8,5,1^5} - f_2^{(2)}e_2^{(1)} \spe{9,8,5,1^5} \\
    &= \spe{9,8,5,1^6} + \spe{9^2,5,1^5} - f_2^{(2)} \spe{9,8,4,1^5} \\
    &= - \spe{9^2,4,1^6}.
\end{align*}
The partitions $(9,8,5,1^5)$ and $(9^2,4,1^6)$ both have $5$-quotient $(\vn,\vn,(1^2),(2),(1))$ (with the appropriate convention), and their $5$-cores, namely $(2)$ and $(3)$, differ in the addition of a $2$-node.

This is illustrated with Young diagrams and a choice of abacus display below. The removed and added nodes, and the swapped runners, are coloured.
(Here we have chosen a number of beads congruent to \(0\) modulo \(5\), so that the parts with final nodes of residue \(2\) are given on runner \(2\), and it is runners \(1\) and \(2\) that are swapped.)
\[
\abacus(lmmmr,bAnbb,bACnn,nnCnn,nAnbn) 
\;=\;
\gyoung(012340123,40123401,3401!\YpC2!\wht,2,1,0,4,3,:)
\quad
\xmapsto{\phantom{s}S_2^{(1)}}
\quad
\gyoung(012340123,40123401!\YpA2!\wht,3401,2,1,0,4,3,!\YpA2!\wht)
\;=\;
\abacus(lmmmr,bnAbb,bCAnn,nCnnn,nnAbn) 
\]
\end{rmk}

\subsection{Action on spin characters}
\label{subsec:runner-swapping_functor_on_spin_chars}

We next prove a corresponding result for spin characters. This is not quite as general as \cref{runner-swap_application_on_charsgeneral} (it doesn't apply to all strict partitions), though still more general than is needed in \cref{subsec:if_application_of_functors}.

\begin{thm}
\label{runner-swap_application_on_spin_charsgeneral}
Let \(\al = \btwoc{a} \sqcup 2\eta\) be a strict partition whose odd parts form a \fbc.
\begin{enumerate}[(i)]
    \item
Let $\ep$ be the residue of $a$ modulo $2$. Then
\begin{align*}
    \runnerswap{\eps}{a+1} \spn{ \btwoc{a} \sqcup 2\eta }
     = \pm \spn{ \btwoc{a+1} \sqcup 2\eta }. \\
\intertext{ \item
\csname cref@label\endcsname{item:runner-swap_application_on_spin_charsgeneral_case_for_application} 
Suppose \(a \geq 1\), and let $\ep$ be the residue of $a+1$ modulo $2$. Then }
\runnerswap{\eps}{-a} \spn{ \btwoc{a} \sqcup 2\eta }
    = \pm \spn{ \btwoc{a-1} \sqcup 2\eta }.
\end{align*}
\end{enumerate}
\end{thm}

The special case of \Cref{runner-swap_application_on_spin_charsgeneral} in which $\eta = \twoc r+\twoc s$ gives \cref{runner-swap_application_on_spin_chars} (required in the proof of our main theorem).

Our proof is similar to that of \Cref{runner-swap_application_on_charsgeneral}, but (because we need to consider \spr{}s rather than residues) we consider the effect of the functor on pairs of adjacent columns
rather than individual diagonals.

For \(\al\in\scrd\) and an even non-negative integer $d$, let $\spinaddsondiag{d}{\al}$ denote the number of \spams in columns $d$ and $d+1$ minus the number of \sprms in columns $d$ and $d+1$. (When $d=0$, we ignore any reference to column $d$.) 
Note that any particular column can contain at most one \spam and at most one \sprm, and cannot contain both.
Column \(1\) must contain either a \spad or a \sprm; meanwhile, for any other column of residue \(\eps\), if there is a spin-addable or \sprm in that column, then the adjacent column of residue \(\eps\) contains either a \spad or \sprm (not necessarily of the same type).
Thus \(\spinaddsondiag{d}{\al}\in \{-2,0,2\}\) if \(d > 0\), and \(\spinaddsondiag{d}{\al} \in \{-1,1\}\) if \(d=0\).

If $\be$ is another strict partition, let $\bdiffondiag{d}{\be}{\al}$ be the number of nodes of $\be$ in columns $d$ and $d+1$ minus the number of nodes of $\al$ in these columns, i.e.\ \(\bdiffondiag{d}{\be}{\al} = \be'_d+\be'_{d+1}-\al'_d-\al'_{d+1}\) (ignoring $\al'_d$ and $\be'_d$ if $d=0$).

\begin{lemma}\label{lessdd}
Suppose $\al\in\scrd$, $c\in\bbz$, and $\ep\in\{0,1\}$. If \(\be\in\scrd\) and $\spn\be$ appears with non-zero coefficient in $\runnerswap{\eps}{c}\spn\al$, then $\bdiffondiag{d}{\be}{\al} \leq \spinaddsondiag{d}{\al}$ for all \(d \geq 0\) with \(d \equiv 2\eps \ppmod{4}\).
\end{lemma}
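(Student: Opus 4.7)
The plan is to emulate the proof of \cref{ldd}, with single-diagonal moves replaced by (pairs of) adjacent columns of spin-residue $\ep$.

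First I would note that, since $e_\ep$ and $f_\ep$ only affect \nds\ep, any $\be$ with $\spn\be$ appearing in $\runnerswap{\ep}{c}\spn\al$ agrees with $\al$ on every column of spin-residue other than $\ep$. Hence for $d \equiv 2\ep \ppmod{4}$, the difference $\bdiffondiag{d}{\be}{\al}$ records only the net change in columns $d, d+1$ (or just column $1$ for $d = 0$). Strictness of $\al$ and $\be$, together with the equal column heights at $d-1$ and $d+2$, restricts $\bdiffondiag{d}{\be}{\al}$ to a small range, so the bound is immediate whenever $\spinaddsondiag{d}{\al}$ takes its maximum value ($+2$ for $d > 0$, or $+1$ for $d = 0$).

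For the remaining cases, I would suppose $\bdiffondiag{d}{\be}{\al} > \spinaddsondiag{d}{\al}$ and set $n = \bdiffondiag{d}{\be}{\al} - \spinaddsondiag{d}{\al} \geq 1$; a structural analysis of the possible configurations shows $n \leq 2$ when $d > 0$ and $n \leq 1$ when $d = 0$, and that $n$ equals the number of \sprms of $\al$ in columns $d, d+1$ which are retained as nodes of $\be$ (call these the \emph{stuck} \sprms). Letting $\La$ denote the set of strict partitions $\ze$ such that $\al$ and $\be$ are each obtained from $\ze$ by adding \nds\ep, \cref{spinbranch} decomposes the coefficient of $\spn\be$ in $\runnerswap{\ep}{c}\spn\al$ as
\[
\sum_{\ze \in \La} (-1)^{a_\ze + c} \, \ip{e_\ep^{(a_\ze)} \spn\al}{\spn\ze} \, \ip{f_\ep^{(a_\ze + c)} \spn\ze}{\spn\be}, \qquad a_\ze = |\al| - |\ze|.
\]
I would then classify each $\ze \in \La$ by the number $k$ of stuck \sprms it omits, and show that the $k$th summand has weight $\binom{n}{k}$, so that the sum telescopes to $\sum_{k=0}^{n} (-1)^k \binom{n}{k} = (1-1)^n = 0$, yielding the desired contradiction.

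The main obstacle is the identification of the binomial weights. This requires a careful analysis of the count $b$ in \cref{spinbranch} at both the removal step ($\al \to \ze$) and the addition step ($\ze \to \be$): when $\ze$ omits exactly one of two adjacent stuck nodes, the omitted node is isolated in $\al \ydsm \ze$ and likewise the re-added node is isolated in $\be \ydsm \ze$, giving $b = 1$ at each step and combined weight $\sqrt{2} \cdot \sqrt{2} = 2 = \binom{2}{1}$; whereas when $\ze$ omits both or neither, $b = 0$ at each step and the weight is $1 = \binom{2}{0} = \binom{2}{2}$. For $d = 0$ with $n = 1$ the argument reduces to a direct analog of the pairing in \cref{ldd}.
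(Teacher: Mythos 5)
Your approach matches the paper's: use \cref{spinbranch} to expand the coefficient of $\spn\be$ as a sum over intermediate strict partitions $\ze$, and show it vanishes by a sign-cancellation argument. The unification of the paper's four cases via the single parameter $n$ and the binomial identity $\sum_{k=0}^{n}(-1)^k\binom{n}{k}=0$ is a nice reformulation, and your identification of $n$ with the number of ``stuck'' \sprms is correct in every case with $n\gs1$.

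However, the key step ``the $k$th summand has weight $\binom{n}{k}$'' is not well-posed as stated: the inner products in \cref{spinbranch} depend on all of $\ze$, not only on $k$, so different $\ze$ with the same $k$ carry different weights. The paper resolves this by first partitioning $\La$ into equivalence classes according to agreement outside columns $d,d+1$; within each class there is exactly one $\ze_k$ for each $k\in\{0,\dots,n\}$ (strictness of the intermediate partitions restricts which stuck nodes can be omitted independently), and it is only \emph{relative to the class} that the weights are proportional to $\binom{n}{k}$. Without this refinement the binomial cancellation has no common factor to pull out. You should also note that your explicit $b$-analysis covers $n=2$ and $n=1,d=0$, but not $n=1,d>0$ (the paper's cases 2 and 4); there the single extra factor of $\sqrt2$ shifts between the $e_\ep$ step and the $f_\ep$ step as $k$ goes from $0$ to $1$, rather than appearing at both or neither, so the bookkeeping is slightly different even though the ratio $\binom{1}{0}:\binom{1}{1}=1:1$ still holds.
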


\begin{pf}
The fact that $\spn\be$ appears in $\runnerswap{\eps}{c}\spn\al$ means that $\be$ is obtained from $\al$ by removing and then adding \nds\ep.
This implies in particular that $-2 \ls \bdiffondiag{d}{\be}{\al} \ls 2$ for every $d$, and $-1 \ls \bdiffondiag{0}{\be}{\al} \ls 1$. 
Now it is immediate that $\bdiffondiag{d}{\be}{\al} \leq \spinaddsondiag{d}{\al}$ if $\spinaddsondiag{d}{\al} > 0$.
So we consider \(d\) such that $\spinaddsondiag{d}{\al} \leq 0$.
We assume for a contradiction that $\bdiffondiag{d}{\be}{\al} > \spinaddsondiag{d}{\al}$.

Let $\Ga$ be the set of strict partitions $\ga$ such that $\al$ and $\be$ can both be obtained from $\ga$ by adding \nds\ep. Say that two partitions $\ga^0,\ga^1\in\Ga$ are \emph{equivalent} if they agree outside columns $d,d+1$, i.e.\ $(\ga^0)'_k=(\ga^1)'_k$ for all $k\neq d,d+1$.

Now we consider the possible values of $\spinaddsondiag{d}{\al}$ and $\bdiffondiag{d}{\be}{\al}$ satisfying our assumptions. There are four cases.

\begin{description}

\item[$\spinaddsondiag{d}{\al} = -2$ and $\bdiffondiag{d}{\be}{\al}=0$:]
In this case $\al$ and $\be$ agree in columns $d,d+1$, both having \sprms in both columns.
Then each equivalence class in $\Ga$ has the form $\{\ga^0,\ga^1,\ga^2\}$, where $\bdiffondiag{d}{\ga^i}{\be} = -i$ for each $i$.
Take such a class, and let $a=|\al|-|\ga^0|$.
We claim that
\[
\sum_{i=0}^2
    (-1)^{a+i+c}
    \ip{e_{\eps}^{(a+i)}\spn\al}{\spn{\ga^i}}
    \ip{f_{\eps}^{(a+i+c)}\spn{\ga^i}}{\spn\be}
=0.
\]
Using \cref{spinbranch}, we find that there are positive real numbers \(s,t\) such that 
\begin{align*}
\ip{e_{\eps}^{(a)}\spn\al}{\spn{\ga^0}}&=s,
    &\qquad
    \ip{f_{\eps}^{(a+c)}\spn{\ga^0}}{\spn\be}&=t,
\\
\ip{e_{\eps}^{(a+1)}\spn\al}{\spn{\ga^1}}&=\qt s,
    &\qquad
    \ip{f_{\eps}^{(a+1+c)}\spn{\ga^1}}{\spn\be}&=\qt t,
\\
\ip{e_{\eps}^{(a+2)}\spn\al}{\spn{\ga^2}}&=s,
    &\qquad
    \ip{f_{\eps}^{(a+2+c)}\spn{\ga^2}}{\spn\be}&=t,
\end{align*}
which yields the claim. Now summing over all equivalence classes gives $\ip{\runnerswap{\eps}{c}\spn\al}{\spn\be}=0$, contrary to hypothesis.

\item[$\spinaddsondiag{d}{\al} = -2$ and $\bdiffondiag{d}{\be}{\al} = -1$:]
In this case \(\al\) has \sprms in both columns \(d\) and \(d+1\), one of which has been removed in \(\be\).
Then each equivalence class in $\Ga$ has the form $\{\ga^0,\ga^1\}$, where again $\bdiffondiag{d}{\ga^i}{\be} = -i$ for each $i$, and $\ga^1$ is obtained from $\ga^0$ by removing a node in column $d$ or $d+1$.
Now, letting $a=|\al|-|\ga^0|$, there are $s,t$ such that
\begin{align*}
\ip{e_{\eps}^{(a)}\spn\al}{\spn{\ga^0}} &= \qt s,
    &\qquad
    \ip{f_{\eps}^{(a+c)}\spn{\ga^0}}{\spn\be} &= t,
\\
\ip{e_{\eps}^{(a+1)}\spn\al}{\spn{\ga^1}} &=s, 
    &\qquad
    \ip{f_{\eps}^{(a+1+c)}\spn{\ga^1}}{\spn\be} &= \qt t,
\end{align*}
and again we get $\ip{\runnerswap{\eps}{c}\spn\al}{\spn\be}=0$ and a contradiction.

\item[$\spinaddsondiag{d}{\al} = -1$ and $\bdiffondiag{d}{\be}{\al} > -1$:]
In this case $d=\ep=0$ and \(\al\) has a \sprm in column \(1\).
This node cannot have been removed in \(\be\), or else then \(\bdiffondiag{d}{\be}{\al} = -1\); so \(\be\) also has a \sprm in column $1$, and $\bdiffondiag{d}{\be}{\al}=0$.
Then each equivalence classes in $\Ga$ has the form $\{\ga^0,\ga^1\}$, where $\ga^1$ is obtained from $\ga^0$ by removing a node in column $1$.
Now there are $s,t$ such that
\begin{align*}
\ip{e_{\eps}^{(a)}\spn\al}{\spn{\ga^0}} &= s,
    &\qquad
    \ip{f_{\eps}^{(a+c)}\spn{\ga^0}}{\spn\be}&=t,
\\
\ip{e_{\eps}^{(a+1)}\spn\al}{\spn{\ga^1}} &= s,
    &\qquad
    \ip{f_{\eps}^{(a+1+c)}\spn{\ga^1}}{\spn\be}&=t,
\end{align*}
and we reach a contradiction as in the previous case.

\item[$\spinaddsondiag{d}{\al}=0$ and $\bdiffondiag{d}{\be}{\al} > 0$:]
In this case, columns \(d\) and \(d+1\) of \(\al\) contain a \spam and a \sprm (if neither column contains a \spre or \spam, then necessarily \(\bdiffondiag{d}{\be}{\al} = 0\)), and the \spam has been added in \(\be\).
Then each equivalence class in $\Ga$ has the form $\{\ga^1,\ga^2\}$, where $\bdiffondiag{d}{\ga^i}{\be} = -i$ for each $i$, and $\ga^2$ is obtained from $\ga^1$ by removing a node in column $d$ or $d+1$. Now there are $s,t$ such that
\begin{align*}
\ip{e_{\eps}^{(a)}\spn\al}{\spn{\ga^1}} &= s,
    &\qquad
    \ip{f_{\eps}^{(a+c)}\spn{\ga^1}}{\spn\be}&=\sqrt{2}t,
\\
\ip{e_{\eps}^{(a+1)}\spn\al}{\spn{\ga^2}} &= \sqrt{2}s,
    &\qquad
    \ip{f_{\eps}^{(a+1+c)}\spn{\ga^2}}{\spn\be}&=t,
\end{align*}
and again the terms cancel.\qedhere
\end{description}
\end{pf}

Given \(\al\in\scrd\) and \(\eps \in \{0,1\}\), we construct a strict partition $\bswp\al\ep$ as follows. Examine each pair of adjacent columns with \spr $\ep$, indexed by \(d\) and \(d+1\) for some non-negative \(d\equiv2\ep\ppmod4\), in turn: if $\al$ has \spams but not \sprms in columns \(d\) and \(d+1\), 
then we add these \spams; if $\al$ has \sprms but not \spams in columns \(d\) and \(d+1\), 
then we remove the \sprms; otherwise we do nothing in columns \(d\) and \(d+1\). We can equivalently characterise $\bswp\al\ep$ as the unique strict partition  such that:
\begin{itemize}
\item
$\bswp\al\ep$ is obtained from $\al$ by adding and/or removing \nds\ep, and
\item
$\bdiffondiag{d}{\bswp\al\ep}{\al} = \spinaddsondiag{d}{\al}$ for every $d\equiv2\ep\ppmod4$.
\end{itemize}
Let \(\netspinaddables{\eps}{\al}\) be the number of \espams\ep of \(\al\) minus the number of \esprms\ep (that is, \(\netspinaddables{\eps}{\al} = \sum_{d \equiv 2\eps\ppmod{4}} \spinaddsondiag{d}{\al}\), and also \(\netspinaddables{\eps}{\al} = \abs{\bswp\al\eps} - \abs{\al}\)).

\begin{restatable}{propn}{runnerswaponspins}
\label{spin_characters:action_of_runner-swapping_functor}
Suppose $\al\in\scrd$ and $\ep\in\{0,1\}$.
Then $\runnerswap{\eps}{\netspinaddables{\ep}{\al}}\spn{\al} = \pm\spn{\bswp\al\ep}$.
\end{restatable}

\begin{pf}
Suppose $\spn\be$ appears with non-zero coefficient in $\runnerswap{\eps}{\netspinaddables{\ep}{\al}}\spn\al$. 
\Cref{lessdd} gives $\bdiffondiag{d}{\be}{\al} \leq \spinaddsondiag{d}{\al}$ for every $d\equiv2\ep\ppmod4$.
But the sum over all $d$ of $\bdiffondiag{d}{\be}{\al}$ is $|\be| - |\al|$; this is  the net number of nodes added by \(\runnerswap{\eps}{\netspinaddables{\ep}{\al}}\), which is \(\netspinaddables{\ep}{\al} = \sum_{d \equiv 2\eps \ppmod{4}} \spinaddsondiag{d}{\al}\).
Thus we have equality for every $d$, which forces $\be=\bswp\al\ep$.

It remains to determine the coefficient of $\spn{\bswp\al\ep}$ in $\runnerswap{\eps}{\netspinaddables{\ep}{\al}}\spn\al$.
To do this, let $\Ga$ be the set of $\ga\in\scrd$ such that $\al$ and $\bswp\al\ep$ can both be obtained by adding \nds\ep to $\ga$. The strict partition \(\al \cap \bswp\al\ep\), obtained from \(\al\) by removing the pairs of \esprms\ep in adjacent columns of \(\al\) (and the \esprm\ep in column 1, if it exists), is one such partition.
Setting \(a = \abs{\al} - \abs{\al \cap \bswp\al\ep}\), we have from \Cref{spinbranch}
\[
    \ip{ e_\eps^{(a)}\spn{\al} }{ \spn{\al\cap\bswp{\al}{\ep}} }
    = \ip{ f_\eps^{(a+\netspinaddables{\ep}{\al})} \spn{\al\cap\bswp{\al}{\ep}} }{ \spn{\bswp{\al}{\ep}} }
    = 1.
\]
Thus the contribution from \(\al \cap \bswp\al\ep\) to the coefficient of \(\spn{\bswp{\al}{\ep}}\) is \((-1)^{a+\netspinaddables{\ep}{\al}}\).

To describe \(\Ga\) fully, let $D$ be the set of all $d\equiv2\ep\ppmod4$ such that $\al$ has both a \sprm and a \spam in columns $d,d+1$.
For every $E\subseteq D$, define $\ga_E$ to be the unique strict partition such that
\begin{itemize}
\item
$\ga_E$ agrees with $\al\cap\bswp\al\ep$ except in columns $d,d+1$ for $d\in E$, and
\item
$\bdiffondiag{d}{\ga_E}{\al} = -1$  for all $d\in E$ (that is, the \sprm in column \(d\) or \(d+1\) has been removed in \(\ga_E\) for all $d\in E$).
\end{itemize}
Then $\Ga=\lset{\ga_E}{E\subseteq D}$, with \(\ga_\emptyset = \al \cap \bswp{\al}{\ep}\) and $|\ga_E|=|\al\cap\bswp\al\ep|-|E|$ for each $E$.
Furthermore, \Cref{spinbranch} gives
\[
\ip{e_{\eps}^{(a+|E|)}\spn\al}{\spn{\ga_E}}
    = \ip{f_{\eps}^{(a+|E|+c)}\spn{\ga_E}}{\spn{\bswp\al\ep}}
    =\qt^{|E|}.
\]
Summing over $E$, we obtain
\begin{align*}
\ip{\runnerswap{\eps}{\netspinaddables{\ep}{\al}}\spn\al}{\spn{\bswp\al\ep}}
    &= (-1)^{a+\netspinaddables{\ep}{\al}} \sum_{E\subseteq D}(-2)^{|E|} \\
    &= (-1)^{a+\netspinaddables{\ep}{\al}} \sum_{k=0}^{\abs{D}} \binom{\abs{D}}{k} (-2)^k \\
    &= (-1)^{a+\netspinaddables{\ep}{\al}} (1-2)^{\abs{D}} \\
    &= (-1)^{a+\netspinaddables{\ep}{\al}+|D|}.\qedhere
\end{align*}
\end{pf}

In view of \cref{spin_characters:action_of_runner-swapping_functor}, we would like to understand the partition \(\bswp{\al}{\eps}\).

\begin{lemma}\label{describebsw}
Suppose $\al\in\scrd$ and $\ep\in\{0,1\}$. Then $\bswp\al\ep$ is obtained from $\al$ by:
\begin{itemize}
\item
replacing each part $d\equiv2\ep-1\ppmod4$ with $d+2$,
\item
replacing each part $d>1$ such that $d\equiv2\ep+1\ppmod4$ with $d-2$,
\item
inserting the part $1$, if $\ep=0$ and $1\notin\al$,
\item
removing the part $1$, if $\ep=0$ and $1\in\al$
\end{itemize}
and then reordering parts into decreasing order.
\end{lemma}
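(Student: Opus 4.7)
The plan is to verify that the strict partition $\al^*$ obtained from $\al$ by the stated operations satisfies the two properties characterising $\bswp\al\ep$ in the paragraph preceding the lemma: namely that $\al^*$ is obtained from $\al$ by adding and/or removing \nds\ep, and that $\bdiffondiag{d}{\al^*}{\al} = \spinaddsondiag{d}{\al}$ for every $d \equiv 2\ep \ppmod 4$ with $d \geq 0$. Uniqueness of $\bswp\al\ep$ then forces $\al^* = \bswp\al\ep$.

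First I would check strictness. Apart from the toggling of part $1$ when $\ep = 0$, the rules act within the pairs $\{d, d+2\}$ with $d \equiv 2\ep - 1 \ppmod 4$: if both values lie in $\al$ the rules swap them (the multiset is unchanged); if exactly one lies in $\al$ it is sent to the other (which is absent by hypothesis); and if neither lies in $\al$ nothing happens. No other rule produces or removes the value $1$, so the toggling step is collision-free. Next, a direct computation of $\lfloor c/2 \rfloor \bmod 2$ shows that $d \mapsto d+2$ (for $d \equiv 2\ep - 1$) inserts nodes only in columns $d+1, d+2$, both of \spr $\ep$; that $d \mapsto d-2$ (for $d \equiv 2\ep + 1$, $d > 1$) deletes nodes only in columns $d-1, d$, again of \spr $\ep$; and that toggling part $1$ affects only column $1$, which has \spr $0$. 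Hence $\al$ and $\al^*$ differ only in \nds\ep.

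The main work is matching $\bdiffondiag{d}{\al^*}{\al}$ with $\spinaddsondiag{d}{\al}$. Fix $d \equiv 2\ep \ppmod 4$ with $d > 0$. From the column-count computation, the only substitutions that touch columns $d$ and $d+1$ are $d-1 \mapsto d+1$ (present iff $d-1 \in \al$, contributing $+2$) and $d+1 \mapsto d-1$ (present iff $d+1 \in \al$, contributing $-2$); all other substitutions are ruled out by the congruence constraints. Hence $\bdiffondiag{d}{\al^*}{\al} = 2([d-1 \in \al] - [d+1 \in \al])$. To compute $\spinaddsondiag{d}{\al}$, I would run through the triples $(a_{d-1}, a_d, a_{d+1}) \in \{0,1\}^3$ (where $a_i = [i \in \al]$), checking for each the candidate nodes at the ends of the rows of lengths $d-1, d, d+1$ and which pairs of \nds\ep can be jointly added or removed while preserving strictness. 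A key input is that column $d - 1$ has \spr $\bep$, so no valid \nd\ep-move can also act on the row of length $d-1$; this rules out several otherwise tempting configurations. In every case the count works out to $2([d-1 \in \al] - [d+1 \in \al])$. The boundary $d = 0$ (only relevant for $\ep = 0$) is immediate: column $1$ has a \spad iff $1 \notin \al$ and a \sprm iff $1 \in \al$, matching the toggling rule.

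The main obstacle will be disciplined bookkeeping in the case analysis, particularly the configurations where a node is \spad or \sprm only jointly with a node in its partner column (for example, when $(a_{d-1}, a_d, a_{d+1}) = (1, 1, 0)$, the node at column $d+1$ at the end of the row of length $d$ is \spad alone, while the node at column $d$ at the end of the row of length $d-1$ is \spad only when added simultaneously with the former). I anticipate no deeper difficulty: each subcase is settled by a short check of single-node and paired \spr-$\ep$ moves against the strict-partition constraint.
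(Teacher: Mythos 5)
Your proof is correct and takes essentially the same route as the paper's: the paper likewise checks the claim column-pair by column-pair, with eight cases according to which of $j-1$, $j$, $j+1$ are parts of $\al$, together with a separate check of column $1$ when $\ep=0$. The only presentational difference is that you explicitly invoke the uniqueness characterisation of $\bswp\al\ep$ to reduce to the column-count identity, whereas the paper verifies the add-all/remove-all description directly in each case; these amount to the same bookkeeping.
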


\begin{pf}
For each $j>0$ with $j\equiv2\ep\ppmod4$ we can consider the two columns $j,j+1$ of \spre in isolation and verify the result; there are eight cases to check, depending on which of the integers $j-1,j,j+1$ are parts of $\al$.

When $\ep=0$, we additionally need to consider column $1$, and again the result is easy to check.
\end{pf}

\begin{rmk}
\cref{describebsw} can alternatively be described in terms of the $4$-bar-abacus introduced by Bessenrodt and Olsson \cite{bessenrodt97blocksofdoublecovers}: it says that the abacus display for $\bswp\al\ep$ can be obtained from the abacus display for $\al$ by swapping runners $1$ and $3$. This gives further justification to the name ``\rsf''. We refer the reader to \cite[\S3]{bessenrodt97blocksofdoublecovers} for more details on the $4$-bar-abacus.
\end{rmk}

Now we can give an analogue of \cref{lemma:computing_swp_partition_on_cores_and_quotients} for strict partitions.

\begin{lemma}
\label{lemma:computing_bswp_partition_on_abacus}
\begin{enumerate}[(i), beginthm]
    \item
Let \(a \geq 0\) and let \(\eps \in \set{0,1}\).
Then
\[
    \netspinaddables{\eps}{\btwoc{a}} = \begin{cases}
        a+1 & \text{ if \(\eps \equiv a \ppmod{2}\);} \\
        -a & \text{ if \(\eps \not\equiv a \ppmod{2}\),} 
    \end{cases}
\qquad
\bswp{\btwoc{a}}{\eps} = \begin{cases}
        \btwoc{a+1} & \text{ if \(\eps \equiv a \ppmod{2}\);} \\
        \btwoc{a-1} & \text{ if \(\eps \not\equiv a \ppmod{2}\),} 
    \end{cases}
\]
unless \(a=0\) and \(\eps=1\), in which case \(\netspinaddables{1}{\btwoc{0}} = 0\) still holds but \(\bswp{(\btwoc{0})}{1} = \btwoc{0}\).
    \item
Let \(\ga\in\scrd\) with all parts odd, let \(\eta\in\scrd\) and let \(\eps \in \set{0,1}\).
Then
\[
    \bswp{(\gamma \sqcup 2\eta)}{\eps} = \bswp{\gamma}{\eps} \sqcup 2\eta
\]
and hence \(\netspinaddables{\eps}{(\gamma \sqcup 2\eta)} = \netspinaddables{\eps}{\gamma}\).
\end{enumerate}
\end{lemma}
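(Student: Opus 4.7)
The plan is to derive both parts of the lemma directly from \cref{describebsw}, which gives a purely combinatorial description of $\bswp\al\eps$ in terms of the parts of $\al$. The reformulation in \cref{describebsw} already trivialises everything interesting, and both parts will fall out of short case analyses.

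For part (i), I would note that $\btwoc a$ consists of the parts $(2a-1, 2a-5, \ldots)$, all of which are odd and all congruent to $2a-1 \ppmod 4$. Then I split into cases on the parities of $a$ and $\eps$. When $\eps \equiv a \ppmod 2$, every part of $\btwoc a$ is $\equiv 2\eps-1 \ppmod 4$, so each is replaced by $d+2$; if in addition $\eps = 0$ (forcing $a$ even), the part $1$ is freshly inserted, which is exactly what is needed to turn $(2a+1, 2a-3, \ldots, 5)$ into $\btwoc{a+1} = (2a+1, \ldots, 5, 1)$. When $\eps \not\equiv a \ppmod 2$, every part of $\btwoc a$ is $\equiv 2\eps+1 \ppmod 4$, so those exceeding $1$ are replaced by $d-2$, and in the sub-case $\eps = 0$ (forcing $a$ odd) the part $1$ is moreover removed. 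Checking the two sub-cases of this second case yields $\btwoc{a-1}$, except when $a = 0$ and $\eps = 1$ where $\btwoc 0 = \varnothing$ has nothing to modify and stays $\btwoc 0$. The claim about $\netspinaddables\eps{\btwoc a}$ then follows from $\netspinaddables\eps\al = |\bswp\al\eps| - |\al|$ together with the arithmetic-progression identity $|\btwoc a| = a(a+1)/2$.

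For part (ii), I would apply \cref{describebsw} to $\gamma \sqcup 2\eta$. All parts of $2\eta$ are even, so they are neither $\equiv 2\eps-1$ nor $\equiv 2\eps+1 \ppmod 4$; hence the two part-replacement rules in \cref{describebsw} leave the parts of $2\eta$ untouched. Moreover, since $1$ is odd, the condition $1 \in \gamma \sqcup 2\eta$ is equivalent to $1 \in \gamma$, so the insertion/removal of the part $1$ in the computation of $\bswp{(\gamma \sqcup 2\eta)}\eps$ coincides with that in the computation of $\bswp\gamma\eps$. Consequently the modifications that \cref{describebsw} applies to $\gamma \sqcup 2\eta$ are exactly the modifications it applies to $\gamma$, with $2\eta$ carried along unchanged. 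This gives $\bswp{(\gamma \sqcup 2\eta)}\eps = \bswp\gamma\eps \sqcup 2\eta$, and the equality of net spin-addables follows immediately from $\netspinaddables\eps\al = |\bswp\al\eps| - |\al|$.

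There is no real obstacle here: both parts are essentially immediate from \cref{describebsw}. The only point requiring any care is the exceptional case $a=0$, $\eps=1$ in part (i), which must be stated separately because there is no part to decrement and no part $1$ to remove. Implicit in the argument is the fact that the prescription in \cref{describebsw} really does produce a strict partition when combined with $2\eta$; this is automatic because $\bswp\gamma\eps$ has only odd parts (the replacement rules preserve parity, and the insertion/removal of $1$ preserves ``all parts odd''), so $\bswp\gamma\eps$ and $2\eta$ involve disjoint sets of integers and their union is automatically strict.
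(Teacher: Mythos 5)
Your proposal is correct and takes essentially the same approach as the paper: the paper's own proof simply says part (i) is routine and part (ii) follows immediately from \cref{describebsw}, and your case analysis is precisely the ``routine'' work for (i) and the ``immediate'' deduction for (ii) spelled out in detail.
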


\begin{proof}
Part (i) is routine, and part (ii) follows immediately from \cref{describebsw}.
\end{proof}

\Cref{runner-swap_application_on_spin_charsgeneral} now follows by combining \Cref{spin_characters:action_of_runner-swapping_functor,lemma:computing_bswp_partition_on_abacus}.

\begin{rmk}
It is interesting to ask what we get when we apply the function $\runnerswap{\eps}{c}$ for values of $c$ other than \(\netaddables\ep\la\) or \(\netspinaddables{\ep}{\al}\).

First consider a partition $\la$.
\cref{ldd} (and the argument at the start of the proof of \cref{characters:action_of_runner-swapping_functor}) shows that if $c > \netaddables\ep\la$, then $\runnerswap{\eps}{c}\spe\la=0$.
On the other hand, if \(r_\eps\) denotes the number of removable $\ep$-nodes of $\la$, then clearly \(\runnerswap{\eps}{c} \spe\la = 0\) for \(c < -r_\eps\), whilst $\runnerswap{\ep}{-r_\eps}\spe\la = \pm\spe{\la^{-\ep}}$, where $\la^{-\ep}$ is the partition obtained by removing all the removable $\ep$-nodes from $\la$.
Meanwhile if $-r_\eps < c < \netaddables\ep\la$, it is not hard to show that $\runnerswap{\eps}{c}\spe\la$ is a linear combination of two or more different characters.

Now consider $\al\in\scrd$. If $c > \netspinaddables{\ep}{\al}$, then \cref{lessdd} shows that $\runnerswap{\eps}{c}\spn\al=0$. On the other hand, if \(r_\eps\) denotes the number of \esprms\ep of $\al$, then \(\runnerswap{\eps}{c} \spn{\al} = 0\) for \(c < -r_\eps\), whilst $\runnerswap{\ep}{-r_\eps}\spn\al$ is a non-zero multiple of $\al^{-\ep}$, where $\al^{-\ep}$ is the partition obtained by removing all the \esprms\ep from $\al$.
However, this multiple need not be $\pm1$; for example, $\runnerswap{1}{-1}\spn{(2)} = \sqrt2\spn{(1)}$.
Meanwhile if $- r_\eps < c < \netspinaddables{\ep}{\al}$, then $\runnerswap{\eps}{c}\spn\al$ is a linear combination of two or more characters, except in some cases where $c = \netspinaddables{\ep}{\al} - 1$. We leave the reader to work out the details.
\end{rmk}

\section{Quotient-redistributing functions}
\label{sec:quotient-redistributing}

The purpose of this section is to describe the action of the \qrf on certain RoCK characters and spin characters.
We recall the definition of the function from \Cref{sec:fsas_implies_proportional-if}.

\quotientredistributing*

\subsection{Action on characters}
\label{subsec:quotient-redistributing_functor_on_chars}

We will apply the \qrf to characters in RoCK blocks. We say that a partition $\la$ is RoCK if the character $\spe\la$ lies in a RoCK block; that is, if $w\ls c+1$, where $\la$ has $2$-weight $w$ and $2$-core $(c,c-1,\dots,1)$. In fact the structure of RoCK partitions is quite easy to understand; they were first analysed by James and Mathas in \cite{jamesmathas1996} (where they were called ``partitions with enormous $2$-core'').  Recall that if $\la$ is a partition, then we write $\bipla=(\la^{(0)},\la^{(1)})$ for its $2$-quotient. If $\la$ is RoCK, then $\la$ is obtained from its $2$-core by adding $\la^{(1)}_i$ horizontal dominoes (i.e.~pairs of orthogonally adjacent nodes) in row $i$, and adding $(\la^{(0)})'_i$ vertical dominoes in column $i$, for each $i$. The RoCK condition ensures that the horizontal and vertical dominoes do not meet.
This structure is illustrated in \Cref{fig:partition_correspondence}.

\Yfillopacity1

\Yboxdim{9pt}

\begin{figure}[ht]
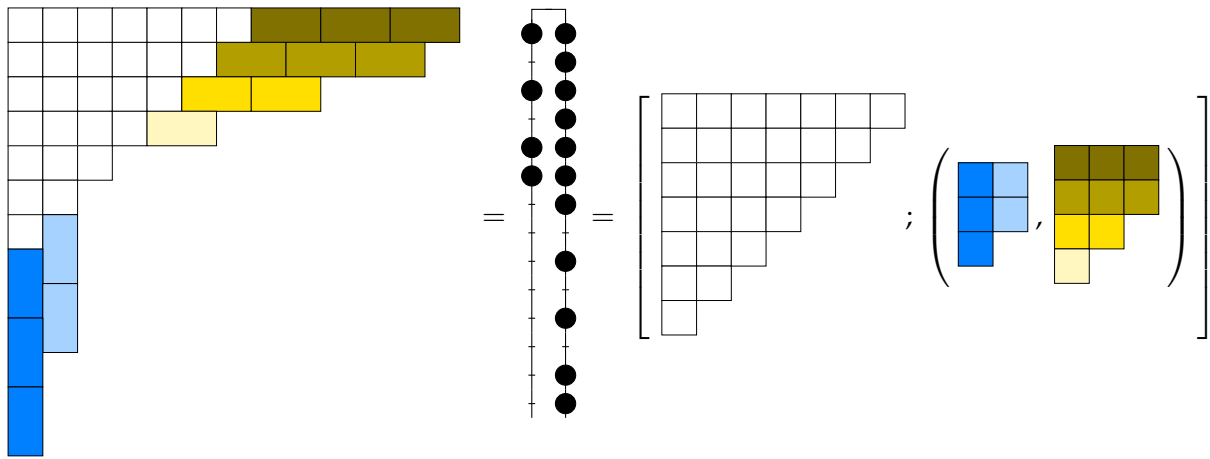

    \centering
    \[
    \gyoung(%
    ;;;;;;;;;;;;;;!\YvdB_2_2_2,!\wht%
    ;;;;;;;;;;;;;!\YdB_2_2_2,!\wht%
    ;;;;;;;;;;;;!\YcB_2_2,!\wht%
    ;;;;;;;;;;;!\YpB_2,!\wht%
    ;;;;;;;;;;,%
    ;;;;;;;;;,%
    ;;;;;;;;,%
    ;;;;;;;,%
    ;;;;;;,%
    ;;;;;,%
    ;;;;,%
    ;;;,%
    ;;,%
    ;!\YpA|2:,%
    !\YcA|2:,!\YpA%
    :|2:,%
    !\YcA|2,%
    |2,%
    )%
    \
    \;=\;
    \sabacus(1,lr,bb,nb,bb,nb,bb,bb,nb,nb,nb,nb,nb,nb,nb,nb,nn,nb,nn,nb,nn,nb,nb)
    \;=\;
    \left[\,
    \;\gyoung(%
    :,%
    ;;;;;;;;;;;;;;,%
    ;;;;;;;;;;;;;,%
    ;;;;;;;;;;;;,%
    ;;;;;;;;;;;,%
    ;;;;;;;;;;,%
    ;;;;;;;;;,%
    ;;;;;;;;,%
    ;;;;;;;,%
    ;;;;;;,%
    ;;;;;,%
    ;;;;,%
    ;;;,%
    ;;,%
    ;,%
    )%
    ;\;
    \left(
    \;
    \gyoung(!\YcA;!\YpA;,!\YcA;!\YpA;,!\YcA;)
    \;,\;
    \gyoung(!\YvdB;;;,!\YdB;;;,!\YcB;;,!\YpB;)
    \;
    \right)
    \,\right]
    \]
    \caption{The RoCK partition \((20,19,16,13,10,9,8,7,6,5,4,3,2^5,1^3)\), depicted as a Young diagram, on an abacus, and in terms of its \(2\)-core and \(2\)-quotient. Observe the correspondence between nodes in the components of the \(2\)-quotient and ``dominoes'' in the Young diagram indicated by the colouring.}
    \label{fig:partition_correspondence}
\end{figure}

We introduce some notation to help us describe the action of the \qrf on RoCK partitions.

\begin{defn}
Given bipartitions \(\bipla\) and \(\bipmu\), write \(\bipla \addtoget \bipmu\) if $\mu^{(0)}$ can be obtained from $\la^{(0)}$ by adding nodes in distinct columns, and $\mu^{(1)}$ can be obtained from $\la^{(1)}$ by adding nodes in distinct rows.
\end{defn}

Now the following result comes from the branching rule (after some translation of notation, part (ii) is effectively a special case of \cite[Lemma 3.1]{ct}).

\begin{lemma}\label{partitions:composed_functors_remove_dominoes}
Let \(\la\) be a partition with \(2\)-core \(\nu\) and \(2\)-weight \(w\).
Let \(r \geq 0\).
Let $\ep$ be the residue of $\len\nu+1$ modulo $2$. 
\begin{enumerate}[(i)]
\item
Suppose $w\ls\len\nu+1$ (that is, that \(\la\) is RoCK).
\[
\ip{e_{\bep}^{(r)}e_{\ep}^{(r)}\spe\la}{\spe\mu}
    = \begin{cases}
        1   &\parbox{318pt}{if \(\mu\) is a partition with \(2\)-core \(\nu\) and \(2\)-weight \(w-r\), and $\bipmu\addtoget\bipla$;}
        \\
        0   &\text{otherwise.}
\end{cases}
\]
\item
Suppose $w+r\ls\len\nu+1$ (that is, that any partition obtained from \(\la\) by adding \(r\) dominoes is RoCK).
\[
\ip{f_{\ep}^{(r)}f_{\bep}^{(r)}\spe\la}{\spe\mu}
    = \begin{cases}
    1   &\parbox{318pt}{if \(\mu\) is a partition with \(2\)-core \(\nu\) and \(2\)-weight \(w+r\), and $\bipmu\removetoget\bipla$;}
    \\
    0   &\text{otherwise.}
\end{cases}
\]
\end{enumerate}
\end{lemma}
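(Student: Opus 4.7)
The plan is to prove part~(ii) first, directly from the classical branching rule, and then deduce part~(i) by adjunction.

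For part~(ii), iterating the branching rule (\cref{branching}) gives
\[
\ip{f_\ep^{(r)}f_\bep^{(r)}\spe\la}{\spe\mu} = \#\bigl\{\kappa : \kappa/\la \text{ is a horizontal $\bep$-strip of size } r,\ \mu/\kappa \text{ is a horizontal $\ep$-strip of size } r\bigr\}.
\]
Work on a $2$-runner abacus display of $\la$ with $r'$ beads, where $r' \equiv a \ppmod{2}$ (with $a = \len\nu$) and $r'$ is sufficiently large. A direct parity computation shows that adding a $\bep$-node corresponds to moving a bead from runner $1$ to the adjacent lower slot on runner $0$, while adding an $\ep$-node moves a bead in the opposite direction; hence $f_\ep^{(r)}f_\bep^{(r)}$ preserves the number of beads on each runner (and so the $2$-core of $\la$) while increasing the $2$-weight by $r$. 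The RoCK hypothesis $w + r \leq a + 1$ ensures that every bead moved from runner $1$ to runner $0$ lands strictly above each original bead on runner $0$, so the two halves of the operation decouple. Under this decoupling the intermediate $\kappa$ is uniquely determined by $(\la, \mu)$, and translating the bead configuration back to the $2$-quotient yields precisely the condition $\bipmu \removetoget \bipla$. This is essentially a special case of \cite[Lemma~3.1]{ct} after translation of notation.

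For part~(i), applying Frobenius reciprocity $\ip{f_\eta^{(s)}\chi}{\psi} = \ip{\chi}{e_\eta^{(s)}\psi}$ twice yields
\[
\ip{e_\bep^{(r)}e_\ep^{(r)}\spe\la}{\spe\mu} = \ip{\spe\la}{f_\ep^{(r)}f_\bep^{(r)}\spe\mu}.
\]
Under the hypothesis of~(i), the partition $\mu$ has $2$-core $\nu$ and $2$-weight $w - r$, and $(w-r) + r = w \leq \len\nu + 1$ shows that $\mu$ satisfies the hypothesis of~(ii) with the same parameter $r$. Applying~(ii) to $\mu$ gives the claim, noting that $\bipla \removetoget \bipmu$ is equivalent to $\bipmu \addtoget \bipla$.

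The main obstacle lies in part~(ii): verifying the unique-intermediate count, which requires careful tracking of bead positions through the two halves of the operation to confirm that the RoCK condition prevents any interaction between the two families of bead movements, and then identifying the resulting combinatorial condition on the $2$-quotients as the horizontal/vertical strip condition encoded by $\bipmu \removetoget \bipla$.
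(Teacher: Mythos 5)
Your overall strategy is sound and consistent with the paper's approach: the paper gives no detailed proof but states that the result ``comes from the branching rule'' and that part~(ii) is a special case of \cite[Lemma~3.1]{ct}, which is the same route you take. The adjunction step deducing (i) from (ii) is correct; in particular you are right that the block-preserving nature of $f_\ep^{(r)}f_\bep^{(r)}$ forces $\mu$ to have $2$-core $\nu$ and weight $w-r$ for the inner product to be non-zero, and you correctly identify that $\bipla\removetoget\bipmu$ is the same as $\bipmu\addtoget\bipla$.

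Two points of criticism on the sketch of part~(ii). First, your abacus description reverses a direction: with the stated bead-count convention, adding a $\bep$-node moves a bead from runner~$1$ down one row onto runner~$0$, and the RoCK hypothesis ensures this moved bead lands in a row \emph{strictly below} (i.e.\ a higher row index than) every original bead on runner~$0$, not ``strictly above'' as you write. Second, even granting the separation, the passage from ``the two halves decouple'' to ``the intermediate $\kappa$ is uniquely determined'' is not a tautology: what is actually needed is that for any $\mu$ reachable from $\la$, the only valid $\kappa$ with $\kappa/\la$ a set of $\bep$-nodes and $\mu/\kappa$ a set of $\ep$-nodes is obtained by taking, for each added domino, its lower-index ($\bep$-) node; and one must also show that if $\mu$ is reachable but does not satisfy $\bipmu\removetoget\bipla$ then no such $\kappa$ exists (e.g.\ two vertical dominoes stacked in the same column give no valid intermediate). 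You flag this as the main obstacle but do not carry it through; since the paper also defers this verification to \cite{ct}, the level of rigour is comparable, but the misdirection in the abacus picture should be corrected if you want the sketch to serve as more than a pointer.
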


Observe that the functors \(e_{\bep}^{(r)}e_{\ep}^{(r)}\) and \(f_{\ep}^{(r)}f_{\bep}^{(r)}\) modify only the \(2\)-quotient of the partition labelling the character they act on, and not its \(2\)-core (in fact this is true even if the partition is not RoCK).

We deduce a description of the action of the quotient-redistributing functor.

\begin{defn}
Given \bips $\bipla$ and $\bipmu$, let
\[
\interm{\bipla}{\bipmu} = \setbuild{\bipnu}{\bipla \removetoget \bipnu \addtoget \bipmu}.
\]
\end{defn}

The following proposition follows immediately from the definitions and \cref{partitions:composed_functors_remove_dominoes}.

\begin{propn}
\label{prop:quotient_redistributing_functor_in_terms_of_interm_sum}
Let \(\la\) be a partition with \(2\)-core \(\nu\) and \(2\)-weight \(w\).
Let \(d \in \bbz\).
Let $\ep$ be the residue of $\len\nu+1$ modulo $2$.
Suppose \(\max\{w,w+d\} \leq \len{\nu} + 1\) (that is, that \(\la\), and any partition obtained from \(\la\) by adding \(d\) dominoes if \(d > 0\), is RoCK).
Then
\[
    \quotred{\eps}{d} \spe{\la} = \sum_{\mu} \left( \sum_{\bipnu \in \interm{\bipla}{\bipmu}} (-1)^{\abs{\bipmu\ydsm\bipnu}} \right) \spe{\mu}
\]
where the sum is over all partitions \(\mu\) with \(2\)-core \(\nu\) and \(2\)-weight \(w-d\).
\end{propn}

It remains, then, to consider \(\interm{\bipla}{\bipmu}\) for the desired bipartitions. Given $r,s\gs0$, we write $\biptwoc rs$ for the bipartition $(\twoc r,\twoc s)$.

\begin{lemma}
\label{lemma:interm_sum_for_bicores}
Let \(s > r \geq 0\).
Let \(\bipmu\) be a bipartition of \(\abs{\biptwoc{r+1}{s-1}}\).
Then
\[
    \sum_{\bipnu \in \interm{\biptwoc{r}{s}}{\bipmu}} (-1)^{\abs{\bipmu\ydsm\bipnu}}
        = \begin{cases}
    (-1)^{r+1} & \text{ if \(\bipmu=\biptwoc{r+1}{s-1}\);} \\
    0 & \text{otherwise.}
    \end{cases}
\]
\end{lemma}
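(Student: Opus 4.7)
My plan is to factorise the sum, characterise when each factor vanishes, and use the size constraint to pin down $\bipmu$.

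The key observation is that the conditions on $\bipnu \in \interm{\biptwoc{r}{s}}{\bipmu}$ decouple: $\nu^{(0)}$ is constrained only by the requirement that both $\twoc{r}\ydsm\nu^{(0)}$ and $\mu^{(0)}\ydsm\nu^{(0)}$ be horizontal strips, and $\nu^{(1)}$ only by the analogous pair of vertical-strip conditions. Hence the sum factorises as $H(\twoc{r}, \mu^{(0)}) \cdot V(\twoc{s}, \mu^{(1)})$, where $H(\la, \mu) := \sum_{\nu} (-1)^{\abs{\mu \ydsm \nu}}$ ranges over $\nu$ with both $\la\ydsm\nu$ and $\mu\ydsm\nu$ horizontal strips, and $V$ is the vertical-strip analogue. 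Since $\twoc{s}$ is self-conjugate, conjugation gives $V(\twoc{s}, \mu) = H(\twoc{s}, \mu')$, so it suffices to analyse $H$.

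Next I would characterise when $H(\twoc{r}, \mu)\neq 0$. The horizontal strip removals from $\twoc{r}$ are parameterised by binary sequences $b = (b_1, \ldots, b_r) \in \{0,1\}^r$ via $\nu_i = r - i + b_i$. The extra requirement that $\mu\ydsm\nu$ be a horizontal strip imposes independent local bounds $\max(0, \mu_{i+1} - (r-i)) \leq b_i \leq \min(1, \mu_i - (r-i))$. If any $b_i$ is free (both values admissible), the summation over it contributes a cancelling factor $1 + (-1) = 0$, so $H \neq 0$ requires every $b_i$ to be forced (and none to be impossible). A case analysis restricts $\mu$ to one of two explicit families: (A) $\mu_1 = r-1$, parameterised by $t \in \{2, \ldots, r+1\}$ via insertions into $\twoc{r-1}$ of total size $2(r-t+1)$; or (B) $\mu = (\mu_1, r, r-1, \ldots, 1)$ with $\mu_1 \geq r$. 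The unique surviving $\nu$ yields a sign $\pm 1$.

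To conclude I would match sizes. Writing $e_0 = \abs{\mu^{(0)}} - \abs{\twoc{r-1}}$ and $e_1 = \abs{\mu^{(1)}} - \abs{\twoc{s-1}}$, the admissible values of $e_0$ form the set $\{0, 2, \ldots, 2r-2\} \cup \{2r, 2r+1, 2r+2, \ldots\}$ (and analogously for $e_1$ with $s$ in place of $r$), while the size condition $\abs{\bipmu} = \abs{\biptwoc{r+1}{s-1}}$ becomes $e_0 + e_1 = 2r + 1$. This sum being odd and $s > r$, any odd value of $e_1$ must be at least $2s+1 > 2r+1$; so $e_1$ is even and $e_0$ odd, forcing $e_0 = 2r+1$ (the smallest odd admissible value) and $e_1 = 0$. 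This yields $\mu^{(0)} = \twoc{r+1}$ (Family B with $\mu_1 = r+1$) and $\mu^{(1)} = \twoc{s-1}$, at which the unique $\bipnu$ is $(\twoc{r}, \twoc{s-1})$, giving $\abs{\bipmu \ydsm \bipnu} = r+1$ and the claimed sign $(-1)^{r+1}$. The main bookkeeping challenge lies in the two-family enumeration of the second step, but the staircase structure of $\twoc{r}$ keeps the local constraints on each $b_i$ transparent.
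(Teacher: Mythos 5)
Your proof is correct, and it takes a genuinely different route from the paper's. The paper begins with an involution argument that works for arbitrary $\bipla$: if some removable node $x$ of $\bipla$ is present in $\bipmu$ and is not ``blocked'' there (no node directly below it in $\mu^{(0)}$, or directly to its right in $\mu^{(1)}$), then toggling $x$ in $\bipnu$ defines a sign-reversing involution on $\interm{\bipla}{\bipmu}$, so the sum vanishes. Specialised to $\bipla=\biptwoc{r}{s}$, which has a removable node in every nonempty row and column, this forces a node to be added or removed in every such row and column of $\bipmu$; a parity count of those $r+s$ changes against the required total size change $-(s-r-1)$, plus a short case analysis on additions in the first row of $\twoc{r}$ versus the first column of $\twoc{s}$, then pins $\bipmu=\biptwoc{r+1}{s-1}$. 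Your approach instead factorises the sum into $H(\twoc{r},\mu^{(0)})\cdot V(\twoc{s},\mu^{(1)})$, parametrises horizontal-strip removals from the staircase by binary sequences $b$, observes that the sum over $b$ itself factorises (since $(-1)^{\abs{\mu\ydsm\nu}}$ is a constant times $\prod_i(-1)^{b_i}$ and the admissibility constraints on $b_i$ are local), so a single free $b_i$ kills the factor, and then explicitly classifies the $\mu$ making the factor nonzero; the size/parity bookkeeping finishes. Your toggling of a free $b_i$ is a restricted form of the paper's involution, but you push further to an explicit enumeration, which is extra information the paper's proof doesn't extract; the trade-off is that the two-family enumeration and the statement of the admissible sizes $e_0\in\{0,2,\dots,2r-2\}\cup\{2r,2r+1,\dots\}$ (which I checked are correct, including the determination that $e_0=2r+1$ forces $\mu^{(0)}=\twoc{r+1}$ and $e_1=0$ forces $\mu^{(1)}=\twoc{s-1}$) take more work, and the description of family~(A) is telegraphic in a way a reader would need to unpack. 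Both proofs rest squarely on the staircase structure of the two components.
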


\begin{proof}
Let \(\bipla\) be any bipartition and let \(\bipnu  \in \interm{\bipla}{\bipmu}\).
Suppose there exists a removable node \(x\) in \(\bipla\) which is also present (though not necessarily removable) in \(\bipmu\), and furthermore that if \(x\) lies in \(\bipmu^{(0)}\) then there is no node directly below it in \(\bipmu^{(0)}\), while if \(x\) lies in \(\bipmu^{(1)}\) then there is no node directly to its right in \(\bipmu^{(1)}\).
Observe that if \(x\) is present in \(\bipnu\), then it is removable in \(\bipnu\), and furthermore the column or row (according to whether \(x\) lies in the \(0\)th or \(1\)st component) containing \(x\) is unchanged between \(\bipla\), \(\bipnu\) and \(\bipmu\), so that \(\bipnu \nodesm x\) also lies in \(\interm{\bipla}{\bipmu}\).
On the other hand, if \(x\) is not present in \(\bipnu\), then \(x\) is addable to \(\bipnu\), and \(\bipnu + x\) lies in \(\interm{\bipla}{\bipmu}\).
Thus we can define an involution on the set \(\interm{\bipla}{\bipmu}\) by adding or removing the node \(x\). This involution also changes the number of nodes added to reach \(\bipmu\) from \(\bipnu\) by exactly \(1\), and therefore reverses the sign \((-1)^{\abs{\bipmu\ydsm\bipnu}}\) in the sum \(\sum_{\bipnu \in \interm{\bipla}{\bipmu}} (-1)^{\abs{\bipmu\ydsm\bipnu}}\). Thus the contributions cancel out, and the sum vanishes.

Now consider \(\bipla = \biptwoc{r}{s}\), which contains a removable node in every nonempty row and column.
The previous paragraph shows that our sum vanishes unless \(\bipmu\) is obtained from \(\biptwoc{r}{s}\) in such a way that every nonempty column of \(\twoc{r}\) and every nonempty row of \(\twoc{s}\) has a node either added or removed.
In this case, the number of added and removed nodes amongst the nonempty rows and columns is equal to \(r+s\), and so the change in the number of nodes amongst these rows and columns is congruent to \(r+s\) modulo \(2\).
But the total change in the number of nodes going from $\biptwoc rs$ to $\mu$ is
\[
|\mu|-|\biptwoc rs|=|\biptwoc{r+1}{s-1}|-|\biptwoc rs|=-(s-r-1).
\]
So there are more nodes added or removed than in just the nonempty rows or columns.
The only possibilities are the addition of nodes in the first row (i.e.\ the empty columns) of \(\twoc{r}\) and the addition of nodes in the first column (i.e.\ the empty rows) of \(\twoc{s}\).

If nodes are added in the first column of \(\twoc{s}\), this means the final nonempty row in \(\twoc{s}\) cannot have a node removed, so a node must be added; then the row above that cannot have a node removed, and so on; thus every nonempty row of \(\twoc{s}\) has a node added.
Then the total change in number of nodes from \(\biptwoc{r}{s}\) to \(\bipmu\) is at least \((s+1)-r>0\), a net gain, contradicting the assumed size of \(\bipmu\).
Thus nodes must be added in the first row of \(\twoc{r}\), and hence a node must be added in every nonempty column of \(\twoc{r}\); then for the total change in number of nodes to be \(-(s-r-1)\) there must be exactly \(1\) node added to the first row of \(\twoc{r}\) and a node removed from every nonempty row of \(\twoc{s}\).
This precisely yields \(\biptwoc{r+1}{s-1}\).

It remains to see that \(\sum_{\bipnu \in \interm{\biptwoc{r}{s}}{\biptwoc{r+1}{s-1}}} (-1)^{\abs{\biptwoc{r+1}{s-1}\ydsm\bipnu}}\) equals \((-1)^{r+1}\).
Indeed, \(\biptwoc{r}{s-1}\) is the unique element of \(\interm{\biptwoc{r}{s}}{\biptwoc{r+1}{s-1}}\), and \(\abs{\biptwoc{r+1}{s-1} \ydsm \biptwoc{r}{s-1}} = r+1\).
\end{proof}

Combining \Cref{prop:quotient_redistributing_functor_in_terms_of_interm_sum,lemma:interm_sum_for_bicores} allows us to deduce the action on the characters required in \Cref{sec:fsas_implies_proportional-if}, restated below.

\quotredapplicationchars*

\subsection{Action on spin characters}
\label{subsec:quotient-redistributing_functor_on_spin_chars}

Now we come to spin characters, where we also focus on RoCK blocks. We say that $\al\in\scrd$ is RoCK if the character $\spn\al$ lies in a RoCK block.

We begin by looking at the structure of \fsemic RoCK strict partitions. Part (i) of the following \lcnamecref{basicspinrock} is immediate; part (ii) is given in \cite[Corollaries 5.3 \& 5.5]{fayers18spin2}.

\begin{lemma}\label{basicspinrock}
Let \(\al\in\scrd\) with \fbc \(\ga\).
\begin{enumerate}[(i)]
    \item
\(\al\) is \fsemic \iff \(\al = (\ga + 4\si) \sqcup 2\eta\) for some \(\si \in \scrp\), \(\eta \in \scrd\).
    \item 
\(\al\) is RoCK \iff \(\al = (\ga + 4\si) \sqcup 2\eta\) for some \(\si \in \scrp\), \(\eta \in \scrd\) with \(2\abs{\si} + \abs{\eta} \leq \len{\dbl{\ga}} +1\).
\end{enumerate}
\end{lemma}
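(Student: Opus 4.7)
The plan is to prove part (i) by a direct combinatorial argument on the odd parts of \(\al\), and to handle part (ii) by combining (i) with a short weight computation, invoking \cite[Corollaries 5.3 \& 5.5]{fayers18spin2} for the nontrivial fact that RoCK spin partitions are automatically \fsemic.

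For (i), I would begin by recording that every \fbc has the explicit form \(\btwoc{r}=(2r-1,2r-5,\dots)\), an arithmetic progression with common difference \(4\); in particular, all its parts share a common residue modulo \(4\). The ``if'' direction is then immediate, since adding \(4\si\) componentwise preserves residues modulo \(4\), while \(2\eta\) contributes only even parts. For the converse, given \(\al\) \fsemic I would split \(\al=\al^{\mathrm{odd}}\sqcup\al^{\mathrm{even}}\); the even parts being distinct gives \(\al^{\mathrm{even}}=2\eta\) for a unique \(\eta\in\scrd\), and the odd parts \(a_1>\dots>a_m\) all satisfy \(a_i\equiv c\ppmod 4\) for a fixed \(c\in\{1,3\}\). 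I would then trace the \fbc reduction on the odd parts: the ``pair-sums-to-a-multiple-of-\(4\)'' move cannot apply to two such parts, since their sum is \(2c\not\equiv 0\ppmod 4\), so the only available move is the replacement \(\al_r\mapsto\al_r-4\). Applying this greedily from the smallest part upward packs the odd parts down to \(\ga=(c+4(m-1),\dots,c+4,c)\). Setting \(\si_i=(a_i-\ga_i)/4\) then produces non-negative integers which are weakly decreasing because both \(a\) and \(\ga\) are strictly decreasing, with \(\ga\) stepping by exactly \(4\) and \(a\) by at least \(4\); hence \(\si\in\scrp\) and \(\al^{\mathrm{odd}}=\ga+4\si\).

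For (ii), the main obstacle will be the forward implication: showing that any RoCK spin partition must be \fsemic. This is not apparent from the weight bound alone and requires analysis of the Bessenrodt--Olsson \(4\)-bar-abacus, on which the RoCK condition forces a packed configuration on the ``odd'' runners that precludes odd parts from lying in distinct residue classes modulo \(4\). I would invoke \cite[Corollaries 5.3 \& 5.5]{fayers18spin2} for this step. Once \fsemic is known, (i) applies and gives \(\al=(\ga+4\si)\sqcup 2\eta\); a direct computation then yields \(\abs{\al}-\abs{\ga}=4\abs{\si}+2\abs{\eta}\), so the \fbc-weight of \(\al\) equals \(2\abs{\si}+\abs{\eta}\). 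Since this coincides with the \(2\)-weight of the block containing \(\spn\al\) (whose \(2\)-core is \(\dbl\ga\)), the RoCK condition \(w\leq\len{\dbl\ga}+1\) translates to \(2\abs{\si}+\abs{\eta}\leq\len{\dbl\ga}+1\), and the reverse direction is the same computation run backwards.
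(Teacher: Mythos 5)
Your proposal is correct and in line with the paper's own treatment, which is extremely terse: the paper simply asserts that part (i) is immediate and that part (ii) is given in \cite[Corollaries 5.3 \& 5.5]{fayers18spin2}. What you have done is to supply the routine verification for (i) that the paper waves past, and to isolate precisely which ingredient of (ii) needs the citation (RoCK implies \fsemic, via the $4$-bar-abacus packing argument), completing the rest by the observation that the $4$-bar-weight of $\al = (\ga+4\si)\sqcup 2\eta$ is $2\abs\si+\abs\eta$ and equals the $2$-weight of the block with $2$-core $\dbl\ga = \twoc{\len{\dbl\ga}}$. Your argument for (i) is sound: since $\ga=\btwoc{r}$ is an arithmetic progression with difference $4$, the ``if'' direction is a residue check, and for ``only if'' the mod-$4$ congruence of the odd parts forbids the pair-deletion move among them so that only $\al_r\mapsto\al_r-4$ is ever available, packing the $m$ odd parts down to $(c+4(m-1),\dots,c)$; the resulting $\si_i=(a_i-\ga_i)/4$ is nonnegative and weakly decreasing because consecutive odd parts differ by at least $4$ while $\ga$ steps by exactly $4$. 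One small point of emphasis: the paper credits the whole of part (ii) to the cited corollaries, whereas you use them only for the structural implication RoCK~$\Rightarrow$~\fsemic{} and prove the weight equivalence directly; this is fine, and arguably clearer, but you should be aware that the reference already contains essentially the entire statement.
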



It turns out that the action of the \qrf can easily be described in terms of the decomposition given in \cref{basicspinrock}. 

We begin with a spin analogue of \cref{partitions:composed_functors_remove_dominoes}. 
We continue to use the notation $\addtoget$ from \cref{subsec:quotient-redistributing_functor_on_chars}, and define the following parameter.

\begin{defn}
Given $\eta,\theta\in\scrd$ with $\eta\supseteq\theta$, define \(\kom\eta\theta\) to be the number of values $c\gs1$ such that exactly one of columns $c$ and $c+1$ contains a node of \(\eta\ydsm\theta\).
\end{defn}

We note that $\kom\eta\theta$ can also be characterised as the number of integers which are parts of $\eta$ or $\theta$ but not both.

\begin{propn}[{\cite[Proposition 5.6 and its Frobenius reciprocal]{fayers20spin2alt}}]
\label{rockbranch}
Suppose \(\al\in\scrd\) with \fbc \(\ga\) and $4$-bar-weight \(w\).
Write \(\al = (\ga+4\si)\sqcup2\eta\), where \(\si \in \scrp\), \(\eta \in \scrd\) are such that $2\abs\si+\abs\eta=w$.
Let \(r \geq 0\).
Let $\ep$ be the residue of $\len{\dbl\ga}$ modulo $2$.
\begin{enumerate}[(i)]
\item
Suppose $w\ls\len{\dbl{\ga}}+1$.
\[
\ip{e_{\bep}^{(r)}e_{\ep}^{(r)}\spn\al}{\spn\be}
    =
    \begin{cases}
    \sqrt2^{\,\kom{\eta}{\theta}}
    &\parbox{254pt}{if \(\be\in\scrd\) with \fbc \(\ga\) and $4$-bar-weight \(w-r\), expressed as \(\be = (\ga + 4\tau) \sqcup 2\theta\) where \((\theta,\tau)\addtoget(\eta,\si)\);}
    \\
    0&\text{otherwise.}
    \end{cases}
\]
\item
Suppose $w+r\ls\len{\dbl{\ga}}+1$.
\[
\ip{f_{\ep}^{(r)}f_{\bep}^{(r)}\spn\al}{\spn\be}
    =
    \begin{cases}
    \sqrt2^{\,\kom{\eta}{\theta}}
    &\parbox{254pt}{if \(\be\in\scrd\) with \fbc \(\ga\) and $4$-bar-weight \(w+r\), expressed as \(\be = (\ga + 4\tau) \sqcup 2\theta\) where \((\theta,\tau)\removetoget(\eta,\si)\);}
    \\
    0&\text{otherwise.}
    \end{cases}
\]
\end{enumerate}
\end{propn}

In view of this, it is natural to make the following definitions.

\begin{defn}
Given $\eta,\theta\in\scrd$, let
\begin{align*}
\intermzero{\eta}{\theta} &= \setbuild*{\ze \in \scrd \,}{\, \parbox{173pt}{both \(\eta\) and \(\theta\) can be obtained from \(\ze\) by adding nodes in distinct columns}}
\intertext{%
and given $\si,\tau\in\scrp$, let %
}
\intermone{\si}{\tau} &= \setbuild*{\phi \in \scrp \,}{\, \parbox{173pt}{both \(\si\) and \(\tau\) can be obtained from \(\phi\) by adding nodes in distinct rows}}.
\end{align*}
Let \(\intermsize{\si}{\tau} = \abs{\intermone{\si}{\tau}}\).
\end{defn}

\begin{propn}
\label{prop:quotient_redistributing_functor_in_terms_of_interm_sum_spin_case}
Suppose \(\al\in\scrd\) with \fbc \(\ga\) and $4$-bar-weight \(w\), and \(d \in \bbz\). Let $\ep$ be the residue of $\len{\dbl\ga}+1$ modulo $2$.
Suppose \(\max\{w,w+d\} \leq \len{\dbl{\ga}} + 1\).
Write \(\al =(\ga+4\si) \sqcup 2\eta\), where \(\eta \in \scrd\) and $\si\in\scrp$ with $2\abs\si+\abs\eta=w$.
Then
\[
\quotred{\eps}{d} \spn{\al}
    = \sum_{\be}
        \left(
        \intermsize{\si}{\tau}
        \sum_{\ze \in \intermzero{\eta}{\theta}}
            (-1)^{\abs{\theta\ydsm\ze}}
            \sqrt2^{\,\kom{\eta}{\ze}+\kom{\theta}{\ze}}
        \right)
    \spn{\be}
\]
where the sum is over all \(\be\in\scrd\) with \fbc \(\ga\) and $4$-bar-weight \(w+d\) expressed as \(\beta = (\ga+4\tau)\sqcup 2\theta\).
\end{propn}

\begin{proof}
It follows from the definitions and \Cref{rockbranch} that
\begin{align*}
\quotred{\eps}{d} \spn{\al}
    &= \sum_{\be}
        \left(
        \sum_{(\ze,\phi) \in \intermzero{\eta}{\theta} \times \intermone{\si}{\tau}}
            (-1)^{\abs{\theta\ydsm\ze}+2\abs{\tau\ydsm\phi}}
            \sqrt2^{\,\kom{\eta}{\ze}+\kom{\theta}{\ze}}
        \right)
    \spn{\be}
\end{align*}
where the sum is over all \(\be\in\scrd\) with \fbc \(\ga\) and $4$-bar-weight \(w+d\) expressed as \(\beta = (\ga+4\tau)\sqcup 2\theta\).
This simplifies to the expression in the \lcnamecref{prop:quotient_redistributing_functor_in_terms_of_interm_sum_spin_case} by noting that the summands of the inner sum do not depend on \(\phi \in \intermone{\si}{\tau}\).
\end{proof}

Our next task is to evaluate the sum over \(\intermzero{\eta}{\theta}\) appearing in \Cref{prop:quotient_redistributing_functor_in_terms_of_interm_sum_spin_case}, which we denote
\[
\isum\eta\theta=\sum_{\ze \in \intermzero{\eta}{\theta}}(-1)^{\abs{\theta\ydsm\ze}}\sqrt2^{\,\kom{\eta}{\ze}+\kom{\theta}{\ze}}.
\]

\begin{propn}\label{prop:tabrim_evaluation}
Suppose $\eta,\theta\in\scrd$. Let $d=\abs\theta-\abs\eta$, and let $r$ be the number of parts of $\eta$ greater than $|d|$. Then
\[
\isum\eta\theta=
\begin{cases}
(-1)^r\sqrt2&\text{if 
$\theta=\eta\partsm(-d)$;}
\\
(-1)^r&\text{if $\theta=\eta$;}
\\
(-1)^{r+d}\sqrt2&\text{if 
$\theta=\eta\sqcup(d)$;}
\\
0&\text{otherwise.}
\end{cases}
\]
\end{propn}

We prove \cref{prop:tabrim_evaluation} by iteratively removing columns from \(\eta\) and \(\theta\).
Fix \(\eta\) and \(\theta\), and for \(c \geq 1\), let \(\coldiff{c} = \eta'_c - \theta'_c\) be the difference between the length of the \(c\)th columns in \(\eta\) and \(\theta\).
For any $\ze\in\scrd$, let $\colrem\ze$ be the strict partition obtained by deleting the first column. 

The iterative step hinges on the following \lcnamecref{intermind}.

\begin{lemma}\label{intermind}
Suppose $\eta,\theta\in\scrd$. 
Then
\[
\isum\eta\theta=
\begin{cases}
\phantom{-\sqrt{2}}\isum\ceta\ctheta
&\text{if \(\coldiff{1} = \coldiff{2} = 1\), or if \(\coldiff{1} = \coldiff{2} = 0\) and \(\eta'_1 - \eta'_2 = 0\);}
\\
\phantom{\sqrt{2}}{-}\isum\ceta\ctheta
&\text{if \(\coldiff{1} = \coldiff{2} = -1\), or if \(\coldiff{1} = \coldiff{2} = 0\) and \(\eta'_1 - \eta'_2 = 1\);}
\\
\phantom{-}\sqrt2\isum\ceta\ctheta
&\text{if \(\coldiff{1} = 1\) and \(\coldiff{2} = 0\);}
\\
-\sqrt2\isum\ceta\ctheta
&\text{if \(\coldiff{1} = -1\) and \(\coldiff{2} = 0\);}
\\
\phantom{-\sqrt{2}B}0
&\text{if \(\coldiff{1} = 0\) and \(\coldiff{2} \neq 0\), or if \(\abs{\coldiff{c}}>1\) for any \(c\).}
\end{cases}
\]
\end{lemma}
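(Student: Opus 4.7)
The plan is to compare the defining sums of \(\isum{\eta}{\theta}\) and \(\isum{\ceta}{\ctheta}\) by grouping terms according to the map \(\ze \mapsto \czeta\) that deletes the first column. If \(\ze \in \iet\), then the distinct-columns condition on \(\eta\ydsm\ze\) and \(\theta\ydsm\ze\) restricts to columns \(c \geq 2\), so \(\czeta \in \ciet\) and the map is well-defined. Let \(a_c := \eta'_c - \ze'_c\) and \(b_c := \theta'_c - \ze'_c\) for \(c \in \set{1,2}\); these take values in \(\set{0,1}\) with \(a_c - b_c = \coldiff{c}\). This immediately handles the vanishing conditions \(\abs{\coldiff c} > 1\): if \(\abs{\coldiff{1}} > 1\) then no valid \((a_1,b_1)\) exists so \(\iet\) is empty, and if \(\abs{\coldiff{2}} > 1\) then \(\ciet\) (hence \(\iet\)) is empty.

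A direct term-by-term comparison -- using \(\abs{\theta\ydsm\ze} - \abs{\ctheta\ydsm\czeta} = b_1\) together with the observation that deleting the first column affects \(k(\cdot)\) only through the adjacency of columns \(1\) and \(2\), giving \(\kom{\eta}{\ze} - \kom{\ceta}{\czeta} = [a_1 \neq a_2]\) and likewise for \(\theta\) -- shows that the summand of \(\isum{\eta}{\theta}\) indexed by \(\ze\) equals the summand of \(\isum{\ceta}{\ctheta}\) indexed by \(\czeta\) multiplied by
\[
(-1)^{b_1}\, \sqrt{2}^{\,[a_1 \neq a_2] + [b_1 \neq b_2]}.
\]

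It remains to sum this multiplier over the fiber of each \(\czeta \in \ciet\). The admissible lifts correspond to admissible pairs \((a_1,b_1)\), forced uniquely when \(\coldiff{1} = \pm 1\) and chosen from \(\set{(0,0),(1,1)}\) when \(\coldiff{1} = 0\). Strictness of \(\ze\) imposes the further constraint \(\ze'_1 - \czeta'_1 = (\eta'_1 - \eta'_2) - a_1 + a_2 \in \set{0,1}\), while strictness of \(\eta\) and \(\theta\) requires \(\eta'_1 - \eta'_2,\theta'_1 - \theta'_2 \in \set{0,1}\). A case analysis across the configurations of \((\coldiff{1},\coldiff{2})\) then computes the fiber sum; crucially, it is independent of \(\czeta\), establishing the stated identity.

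The main obstacle is the case \(\coldiff{1} = \coldiff{2} = 0\), which requires a further split according to whether \(\eta'_1 - \eta'_2\) is \(0\) or \(1\) (equivalently, whether \(1 \notin \eta\) or \(1 \in \eta\)). In each subcase, for each value of \(a_2\), either a single lift contributes \(\pm 1\), or two lifts contribute \(\pm 2\) and \(\mp 1\); in either event the fiber sum equals \(\pm 1\) uniformly, recovering the claimed sign. Similarly, the cases \((\coldiff{1},\coldiff{2}) \in \set{(0,1),(0,-1)}\) rely on the exact cancellation \(\sqrt{2} + (-\sqrt{2}) = 0\) between the two admissible lifts, giving \(\isum{\eta}{\theta} = 0\).
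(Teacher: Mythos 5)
The proposal is correct and follows essentially the same strategy as the paper's proof: both compare the sums via the column-deletion map $C\colon\ze\mapsto\czeta$ from $\iet$ to $\ciet$ and then analyse the fibres case-by-case. The one genuine refinement is that the proposal packages the term-by-term comparison into the single clean formula $\isummand{\ze}{\eta}{\theta}=\isummand{\czeta}{\ceta}{\ctheta}\cdot(-1)^{b_1}\sqrt{2}^{\,[a_1\neq a_2]+[b_1\neq b_2]}$, whereas the paper reproduces this arithmetic separately within each of its five cases by directly relating $\kom{\eta}{\ze}$ and $\kom{\theta}{\ze}$ to $\kom{\ceta}{\czeta}$ and $\kom{\ctheta}{\czeta}$; this buys some economy but is not a different route. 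One point you leave implicit, as does the paper, is that whenever the multiplier is nonzero the fibre over each $\czeta\in\ciet$ really is nonempty (equivalently, that $C$ is surjective); this follows from the strictness inequalities $\eta'_1-\eta'_2,\theta'_1-\theta'_2\in\{0,1\}$ you invoke, which pin down the allowed $(\coldiff1,\coldiff2,\eta'_1-\eta'_2)$ triples and rule out the a priori dangerous configuration $\ze'_1-\ze'_2=2$, but it would be worth spelling out since the lemma fails without it. Your sentence ``if $\abs{\coldiff2}>1$ then $\ciet$ (hence $\iet$) is empty'' silently covers $\abs{\coldiff c}>1$ for all $c\geq2$ via recursion; the paper instead disposes of every $c$ at once by observing directly that $\ze\in\iet$ forces $\eta'_c,\theta'_c\in\{\ze'_c,\ze'_c+1\}$.
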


\begin{pf}
Observe that for any \(\zeta \in \iet\), we must have \(\eta'_c, \theta'_c \in \set{\zeta'_c, \zeta'_c + 1}\) for all \(c\).
In particular, if \(\abs{\coldiff{c}} > 1\) for any \(c\) then \(\iet = \emptyset\) and \(\isum{\eta}{\theta} = 0\) as claimed. For the remainder of the proof, assume \(\abs{\coldiff{c}} \leq 1\) for all \(c\).

For \(\zeta \in \iet\), let $\isummand\ze\eta\theta=(-1)^{\abs{\theta\ydsm\ze}}\sqrt2^{\kom{\eta}{\ze}+\kom{\theta}{\ze}}$ denote the contribution to \(\isum\eta\theta\) from \(\zeta\).
Our strategy is to compare $\isummand\zeta\eta\theta$ and $\isummand\czeta\ceta\ctheta$. 
We claim that $\ze\mapsto\czeta$ defines a surjective function from $\iet$ to $\ciet$, so that our strategy does consider all contributions to \(\isum\ceta\ctheta\).
Indeed, it is clear that if \(\zeta \in \iet\) then $\czeta\in\ciet$.
For surjectivity, consider a strict partition \(\delta \in \ciet\), and observe that \(\delta'_1 \leq \eta'_2 \leq \eta'_1\) and also that \(\eta'_1 \leq \eta'_2 + 1 \leq \delta'_1+2\), and likewise for \(\theta\), so that
\[
    \delta'_1 \leq \eta'_1, \theta'_1 \leq \delta'_1 +2.
\]
Since we assume \(\abs{\coldiff{1}} \leq 1\), we then have that either \(\{\eta'_1, \theta'_1\} \subseteq \{\delta'_1, \delta'_1+1\}\) and hence adjoining a column of length \(\delta'_1\) to \(\delta\) yields an element of \(\iet\), or \(\{\eta'_1, \theta'_1\} \subseteq \{\delta'_1+1, \delta'_1+2\}\) and hence adjoining a column of length \(\delta'_1+1\) to \(\delta\) yields an element of \(\iet\).
(Note also that these are the only possible column lengths that can be adjoined to \(\delta\) to obtain a strict partition, and thus every element of \(\ciet\) has a preimage of size \(1\) or \(2\).)

To compare the contributions \(\isummand\zeta\eta\theta\) and \(\isummand\czeta\ceta\ctheta\), we break down into several cases.

\begin{description}
\item[\(\coldiff{1} = 1\):]
In this case $\ze'_1 = \theta'_1 = \eta'_1 - 1$ for every $\ze\in\iet$, and it follows that $\ze\mapsto\czeta$ is a bijection.
Clearly $\abs{\theta\ydsm\ze}=\abs{\ctheta\ydsm\czeta}$ for every $\ze\in\iet$.
Note that column \(1\) contributes to \(\kom\eta\zeta\) if and only if \(\eta'_2 = \zeta'_2\), while column \(1\) contributes to \(\kom\theta\zeta\) if and only if \(\theta'_2 = \zeta'_2+1\), and thus
\[
\kom\eta\ze=
\begin{cases}
\kom\ceta\czeta+1 &\text{if }\eta'_2=\ze'_2
\\
\kom\ceta\czeta & \text{if }\eta'_2=\ze'_2+1,
\end{cases}
\qquad\kom\theta\ze=
\begin{cases}
\kom\ctheta\czeta&\text{if }\theta'_2=\ze'_2
\\
\kom\ctheta\czeta+1&\text{if }\theta'_2=\ze'_2+1,
\end{cases}
\]
and hence
\[
\kom\eta\ze+\kom\theta\ze=
\begin{cases}
\kom\ceta\czeta+\kom\ctheta\czeta+1&\text{if \(\coldiff{2}=0\);}
\\
\kom\ceta\czeta+\kom\ctheta\czeta&\text{if \(\coldiff{2}=1\).}
\end{cases}
\]
Therefore
\[
\isummand\ze\eta\theta=
\begin{cases}
\sqrt2\isummand\czeta\ceta\ctheta&\text{if \(\coldiff{2} = 0\);}
\\[5pt]
\phantom{\sqrt{2}}\isummand\czeta\ceta\ctheta&\text{if \(\coldiff{2} = 1\).}
\end{cases}
\]
for every $\ze\in\iet$. Summing over $\ze$ gives the claimed values when \(\coldiff{1} = 1\).

\item[\(\coldiff{1} = -1\):]
This is very similar to the previous case; now $\abs{\theta\ydsm\ze}=\abs{\ctheta\ydsm\czeta}+1$ for every $\ze$, and the formul\ae{} from the last case apply with $\eta$ and $\theta$ interchanged and a change of sign in the last formula.

\item[\(\coldiff{1} = 0\) and \(\coldiff{2}\neq0\):]
We consider the case \(\coldiff{2} = 1\); the case \(\coldiff{2} = -1\) is obtained by interchanging \(\eta\) and \(\theta\) in what follows.
Having \(\coldiff{1} = 0\) and \(\coldiff{2} = 1\) implies $\ze'_2 = \theta'_2= \eta'_2 -1$ for every $\ze\in\iet$, while $\ze'_1$ can be either $\eta'_1 = \theta'_1$ or $\eta'_1 - 1 = \theta'_1-1$.
So for each $\de\in\ciet$ there are two partitions $\ze^+,\ze^-\in\iet$ satisfying $\colrem{\ze^+} = \colrem{\ze^-} = \delta$: we obtain $\ze^+$ by adjoining a column of length $\eta'_1$ to $\de$, and $\ze^-$ by adjoining a column of length $\eta'_1-1$.
But now $\abs{\theta\ydsm\ze^-} - \abs{\theta\ydsm\ze^+} = 1$ (and also \(\abs{\eta\ydsm\ze^-} - \abs{\eta\ydsm\ze^+} = 1\)), while
\[
\kom\eta{\ze^-} = \kom\ceta\de+1 = \kom\eta{\ze^+}+1,
\qquad
\kom\theta{\ze^+} = \kom\ctheta\de+1 = \kom\theta{\ze^-}+1,
\]
so that $\isummand{\ze^+}\eta\theta + \isummand{\ze^-}\eta\theta=0$.
This applies for all $\de\in\ciet$, giving $\isum\eta\theta=0$, as required.

\item[\(\coldiff{1} = \coldiff{2} = 0\) and \(\eta'_1 - \eta'_2 = 0\):]
For each $\de\in\ciet$ with $\de'_1=\eta'_2$, there is a unique $\ze\in\iet$ satisfying $\colrem\ze=\de$, obtained by adding a column of length $\eta'_1$ to $\de$. This $\ze$ satisfies
\[
\abs{\theta\ydsm\ze} = \abs{\ctheta\ydsm\de},
\qquad
\kom\eta\ze = \kom\ceta\de,
\qquad
\kom\theta\ze = \kom\ctheta\de
\]
so that $\isummand\ze\eta\theta=\isummand\de\ceta\ctheta$.

For each $\de\in\ciet$ with $\de'_1=\eta'_2-1$, there are two partitions $\ze^+,\ze^-\in\iet$ satisfying $\colrem{\ze^+}=\colrem{\ze^-}=\de$, obtained from $\de$ by adding columns of length $\eta'_1$ and $\eta'_1-1$ respectively. These partitions satisfy
\[
\abs{\theta\ydsm\ze^-} = \abs{\ctheta\ydsm\de}+1 = \abs{\theta\ydsm\ze^+}+1,
\]
\[
\kom\eta{\ze^+} = \kom\ceta\de+1 = \kom\eta{\ze^-}+1,
\qquad
\kom\theta{\ze^+} = \kom\ctheta\de+1 = \kom\theta{\ze^-}+1,
\]
and therefore \(\isummand{\ze^+}{\eta}{\theta} = 2\isummand{\de}{\ceta}{\ctheta}\) and \(\isummand{\ze^-}{\eta}{\theta} = -\isummand{\de}{\ceta}{\ctheta}\).
Hence
\[
\isummand{\de^+}\eta\theta+\isummand{\de^-}\eta\theta
    = \isummand\de\ceta\ctheta.
\]
Now summing over $\iet$ gives $\isum\eta\theta=\isum\ceta\ctheta$, as required.

\item[\(\coldiff{1} = \coldiff{2} =0\) and \(\eta'_1 - \eta'_2 = 1\):]
This case is similar to the previous one.
Now for each $\de\in\ciet$ with $\de'_1=\eta'_2-1$ there is a unique $\ze\in\iet$ such that $\czeta=\de$, and this $\ze$ satisfies $\isummand\ze\eta\theta=-\isummand\de\ceta\ctheta$.
For each $\de$ with $\de'_1=\eta'_2$, there are two partitions $\ze^+,\ze^-\in\iet$ such that $\colrem{\ze^+}=\colrem{\ze^-}=\de$, and these partitions satisfy $\isummand{\ze^+}\eta\theta=\isummand\de\eta\theta$ and $\isummand{\ze^-}\eta\theta=-2\isummand\de\ceta\ctheta$. Summing over $\iet$ then gives $\isum\eta\theta=-\isum\ceta\ctheta$.
This case is illustrated \Cref{fig:Bcalc_example}.
\qedhere
\end{description}
\end{pf}


\begin{figure}[htb]
    \centering
\Yboxdim{5pt}
\renewcommand{\arraystretch}{1.6}
\renewcommand{\arraycolsep}{4pt}
\[
\begin{array}{lccccr}
\toprule
\multicolumn{2}{c}{\zeta} & \abs{\theta\ydsm\ze} & \kom\eta\ze & \kom\theta\ze & \isummand\ze\eta\theta\vphantom{\isummand{\czeta}{\ceta}{\ctheta}}
\\\midrule
(6,5,2,1) & \gyoung(;;;;;;!\YcA;!\wht,;;;;;!\YcA;!\wht,;;!\YcB;!\wht,;)
&1&2&2&-4
\\
(6,5,2) & \gyoung(;;;;;;!\YcA;!\wht,;;;;;!\YcA;!\wht,;;!\YcB;!\wht,!\YcG;)
&2&3&3&8
\\\midrule
(6,5,1) & \gyoung(;;;;;;!\YcA;!\wht,;;;;;!\YcA;!\wht,;!\YcG;!\YcB;!\wht,!\YcG;)
&3&3&1&-4
\\\midrule
(6,4,2,1) & \gyoung(;;;;;;!\YcA;!\wht,;;;;!\YcG;!\YcA;!\wht,;;!\YcB;!\wht,;)
&2&2&4&8
\\
(6,4,2) & \gyoung(;;;;;;!\YcA;!\wht,;;;;!\YcG;!\YcA;!\wht,;;!\YcB;!\wht,!\YcG;!\wht)
&3&3&5&-16
\\\midrule
(6,4,1) & \gyoung(;;;;;;!\YcA;!\wht,;;;;!\YcG;!\YcA;!\wht,;!\YcG;!\YcB;!\wht,!\YcG;)
&4&3&3&8
\\\midrule
(6,3,2,1) & \gyoung(;;;;;;!\YcA;!\wht,;;;!\YcG;;!\YcA;!\wht,;;!\YcB;!\wht,;) 
&3&2&2&-4
\\
(6,3,2) & \gyoung(;;;;;;!\YcA;!\wht,;;;!\YcG;;!\YcA;!\wht,;;!\YcB;!\wht,!\YcG;) 
&4&3&3&8
\\\midrule
(6,3,1) & \gyoung(;;;;;;!\YcA;!\wht,;;;!\YcG;;!\YcA;!\wht,;!\YcG;!\YcB;!\wht,!\YcG;)
&5&3&1&-4
\\
\bottomrule
\end{array}
\quad\ 
\begin{array}{lccccr}
\toprule
\multicolumn{2}{c}{\czeta} & \abs{\ctheta\ydsm\czeta} & \kom\ceta\czeta &  \kom\ctheta\czeta & \isummand\czeta\ceta\ctheta
\\\midrule
\multirow2*{(5,4,1)} & \multirow2*{ \gyoung(;;;;;!\YcA;!\wht,;;;;!\YcA;!\wht,;!\YcB;) }
&\multirow2*1&\multirow2*2&\multirow2*2&\multirow2*{\ensuremath{-4}}
\\
&&&&
\\\midrule
(5,4) & \gyoung(;;;;;!\YcA;!\wht,;;;;!\YcA;,!\YcG;!\YcB;)
&2&3&1&4
\\\midrule
\multirow2*{(5,3,1)}&\multirow2*{ \gyoung(;;;;;!\YcA;!\wht,;;;!\YcG;!\YcA;!\wht,;!\YcB;) }
&\multirow2*2&\multirow2*2&\multirow2*4&\multirow2*8
\\
&&&&
\\\midrule
(5,3) & \gyoung(;;;;;!\YcA;!\wht,;;;!\YcG;!\YcA;,!\YcG;!\YcB;)
&3&3&3&-8
\\\midrule
\multirow2*{(5,2,1)}
& \multirow2*{ \gyoung(;;;;;!\YcA;!\wht,;;!\YcG;;!\YcA;!\wht,;!\YcB;) }
&\multirow2*3&\multirow2*2&\multirow2*2&\multirow2*{\ensuremath{-4}}
\\
&&&&
\\\midrule
(5,2) & \gyoung(;;;;;!\YcA;!\wht,;;!\YcG;;!\YcA;,!\YcG;!\YcB;)
&4&3&1&4
\\
\bottomrule
\end{array}
\]
    \caption{%
A comparison of the calculations of $\isum\eta\theta$ and $\isum\ceta\ctheta$ when $\eta=(7,6,2,1)$ and $\theta=(6,5,3,1)$ (and hence $\ceta=(6,5,1)$ and $\ctheta=(5,4,2)$).
This example falls into the final case (\(\coldiff{1} = \coldiff{2} = 0\) and \(\eta'_1 - \eta'_2 = 1\)) in the proof of \Cref{intermind}.
On the left are the partitions in $\iet$, on the right the partitions in $\ciet$, matched up according to the function $\ze\mapsto\czeta$.
The Young diagrams of the partitions are drawn, with the nodes which must be added to form \(\eta\) or \(\ceta\) shaded blue and the nodes which must be added to form \(\theta\) or \(\ctheta\) shaded yellow (and the nodes which must be added in either case shaded green).}
    \label{fig:Bcalc_example}
\end{figure}

\vspace{3pt}

\begin{proof}[Proof of \cref{prop:tabrim_evaluation}]
Let \(\cdifseq = (\coldiff{1}, \coldiff{2}, \ldots)\) be the sequence of column differences between \(\eta\) and \(\theta\).
Observe that \(\eta = \theta \sqcup (-d)\) if and only if \(\cdifseq = (1, \ldots, 1, 0, \ldots, 0)\) where the number of \(1\)s is \(-d\), that \(\theta = \eta \sqcup (d)\) if and only if \(\cdifseq = (-1, \ldots, -1, 0, \ldots, 0)\), where the number of \(-1\)s is \(d\), and trivially \(\eta = \theta\) if and only if \(\cdifseq = (0, \ldots 0)\).
So we are required to show \(\isum{\eta}{\theta}\) takes the claimed values when \(\cdifseq\) is of one of these forms, and is zero otherwise.

We do so using \Cref{intermind} iteratively.
The fifth case of \Cref{intermind} tells us that if \(\cdifseq\) has any terms strictly greater than \(1\) in absolute value, or if \(\cdifseq\) has any non-zero term following a \(0\), then \(\isum{\eta}{\theta} = 0\).
Meanwhile \(\cdifseq\) cannot contain a \(1\) followed by a \(-1\), for if \(\coldiff{c} = 1\) then \(\eta'_{c+1} \geq \eta'_{c} - 1 = \theta'_c \geq \theta'_{c+1}\) and hence \(\coldiff{c+1} \geq 0\), and similarly \(\cdifseq\) cannot contain a \(-1\) followed by a \(1\).
Thus if \(\isum{\eta}{\theta} \neq 0\), then \(\cdifseq\) is of the form \((1, \ldots, 1, 0, \ldots, 0)\), \((-1, \ldots, -1, 0, \ldots, 0)\) or \((0, \ldots 0)\), where the number of non-zero entries is \(\abs{d}\).

If \(\cdifseq = (0, \ldots, 0)\), the first and second cases of \Cref{intermind} tell us that, as we remove columns from \(\eta\) and \(\theta\), we pick up a sign every time a column length decreases (by \(1\), the only possibly amount by which a column length can decrease in a strict partition). Thus \(\isum{\eta}{\theta} = (-1)^{\len{\eta}} = (-1)^{r}\) in this case, as required.

If \(\cdifseq = (1, \ldots, 1, 0, \ldots, 0)\), the first case of \Cref{intermind} tells us that removing the columns which differ by \(1\) do not alter the value of \(\isum{\eta}{\theta}\) until the last, when the third case tells us that we pick up a factor of \(\sqrt{2}\). Then the remaining columns are those meeting parts greater than \(\abs{d}\), so arguing as in the previous paragraph gives \(\isum{\eta}{\theta} = (-1)^r \sqrt{2}\) as required.
If \(\cdifseq = (-1, \ldots, -1, 0, \ldots, 0)\), a similar argument applies but using the second and fourth cases of \Cref{intermind}, from which we pick up extra signs for each of the \(d\) differing columns, yielding \(\isum{\eta}{\theta} = (-1)^{r+d} \sqrt{2}\) as required.
\end{proof}

Combining \Cref{prop:quotient_redistributing_functor_in_terms_of_interm_sum_spin_case,prop:tabrim_evaluation} then yields the following. (Recall that given partitions \(\si\) and \(\tau\), we write \(\intermsize{\si}{\tau} = \abs{\intermone{\si}{\tau}}\).)

\begin{thm}
\label{thm:action_of_quotient_redistributing_functor_on_arbitrary_RoCK_character}
Suppose \(\al\in\scrd\) has \fbc \(\ga\) and $4$-bar-weight \(w\). Let \(d \in \bbz\). Let $\ep$ be the residue of $\len{\dbl\ga}+1$ modulo $2$. Suppose \(\max\{w,w+d\} \leq \len{\dbl{\ga}} + 1\).
Write \(\al =(\ga+4\si) \sqcup 2\eta\), where \(\eta \in \scrd\) and $\si\in\scrp$ with $2\abs\si+\abs\eta=w$. Let \(r\) be the number of parts of \(\eta\) greater than \(\abs{d}\).
Then
\[
\quotred{\eps}{d} \spn{\al}
    = \begin{cases}
        \hphantom{(-1)^{\lenabove{d}{\eta}+d} \sqrt{2}}\mathllap{(-1)^{\lenabove{-d}{\eta}} \sqrt{2}}
            \ \displaystyle\sum_{\tau} \intermsize{\si}{\tau} \spn{(\ga + 4\tau) \sqcup2(\eta\partsm(-d))}
            & \text{if \(d<0\) and \(-d \in\eta\);} \\[12pt]
        \hphantom{(-1)^{\lenabove{d}{\eta}+d} \sqrt{2}} \mathllap{(-1)^{r}}
            \ \displaystyle\sum_{\tau} \intermsize{\si}{\tau} \spn{(\ga+4\tau) \sqcup 2\eta}
            & \text{if \(d=0\);} \\[12pt]
        (-1)^{\lenabove{d}{\eta}+d} \sqrt{2}
            \ \displaystyle\sum_{\tau} \intermsize{\si}{\tau} \spn{(\ga + 4\tau) \sqcup2(\eta \sqcup (d))}
            & \text{if \(d>0\) and \(d \not\in \eta\);} \\[10pt]
        \hphantom{(-1)^{\lenabove{d}{\eta}+d} \sqrt{2}}\ \ \; 0
            & \text{otherwise;}
    \end{cases}
\]
where in each case the sum is over partitions \(\tau\) with \(\abs\tau=\abs{\si}\).
\end{thm}

Unfortunately it does not seem to be possible to give a simpler expression for $\intermsize\si\tau$. Nevertheless, we can deduce the result we need.

\begin{cory}
\label{cory:action_of_quotient_redistributing_functor_on_4stepped_odd_parts_RoCK_spin_chars}
Suppose \(\al\in\scrd\) with \fbc \(\ga\) and suppose \(\al = \ga \sqcup 2\eta\) for some \(\eta\in\scrd\).
Let \(d \in \bbz\).
Let $\ep$ be the residue of $\len{\dbl\ga}+1$ modulo $2$.
Suppose \(\max\{\abs{\eta}, \abs{\eta}+d\} \leq \len{\dbl{\ga}}+1\).
Then
\[
\quotred{\eps}{d} \spn{\al}
    = \begin{cases}
        \hphantom{(-1)^{\lenabove{d}{\eta}+d} \sqrt{2}}\mathllap{(-1)^{\lenabove{-d}{\eta}} \sqrt{2}}
            \spn{\ga \sqcup2(\eta\partsm(-d))}
            & \text{if \(d<0\) and \(-d \in\eta\);} \\[5pt]
        \hphantom{(-1)^{\lenabove{d}{\eta}+d} \sqrt{2}} \mathllap{(-1)^{r}}
            \spn{\ga \sqcup2\eta}

            & \text{if \(d=0\);} \\[5pt]
        (-1)^{\lenabove{d}{\eta}+d} \sqrt{2} \spn{\ga \sqcup2(\eta\sqcup(d))}
            & \text{if \(d>0\) and \(d \not\in \eta\);} \\[3pt]
        \hphantom{(-1)^{\lenabove{d}{\eta}+d} \sqrt{2}}
        \; 0
            & \text{otherwise.}
    \end{cases}
\]
\end{cory}
\begin{proof}
Set \(\si = \emptyset\) in \Cref{thm:action_of_quotient_redistributing_functor_on_arbitrary_RoCK_character}, which forces \(\tau = \emptyset\) and hence \(\intermsize{\si}{\tau} = 1\).
\end{proof}

We can now deduce the case required in our application, restated below.

\quotredapplicationspinchars*

\begin{proof}
This follows from \Cref{cory:action_of_quotient_redistributing_functor_on_4stepped_odd_parts_RoCK_spin_chars} by observing that \((\twoc{r}+\twoc{s}) \partsm (-(s-r-1)) = \twoc{r+1}+\twoc{s-1}\).
\end{proof}

The results required in \Cref{sec:fsas_implies_proportional-if} are now all proved, and thus our main theorem is established.

\end{document}